\documentclass[a4paper]{article}
\textwidth=125 mm
\textheight=195 mm

\usepackage[utf8]{inputenc}
\usepackage[T1]{fontenc}
\usepackage[english]{babel} %utilisation du package français
\usepackage{pifont}% for cross mark
\newcommand{\xmark}{\ding{55}}
\usepackage{multirow}
\usepackage{mathtools}% compléments à amsmath
\usepackage{amssymb,amsfonts}% symboles mathématiques supplémentaires
\usepackage{amsthm}% environnement ams-theorem
\usepackage[colorlinks,breaklinks,bookmarks,plainpages=false,unicode=true]{hyperref}%
\usepackage{tikz-cd}
\usepackage{todonotes} %ajouter des commentaires
\newcounter{mycomment}

\newtheorem{lem}{Lemma}[section]
\newtheorem{cor}[lem]{Corollary}
\newtheorem{prop}[lem]{Proposition}
\newtheorem{thm}[lem]{Theorem}
\newtheorem{mainthm}{Theorem}

\newtheorem{mainprop}[mainthm]{Proposition}
\theoremstyle{definition}
\newtheorem{defn}[lem]{Definition}

\newtheorem{exmp}[lem]{Example}
\DeclareMathOperator\Cayley{Cay}
\DeclareMathOperator\Ind{Ind}
\DeclareMathOperator\SL{SL}
\DeclareMathOperator\GL{GL}
\DeclareMathOperator\SO{SO}
\DeclareMathOperator\SU{SU}
\DeclareMathOperator\Sym{Sym}
\DeclareMathOperator\diag{diag}
\DeclareMathOperator\Aut{Aut}
\DeclareMathOperator\ab{ab}
\DeclareMathOperator\diam{diam}
\DeclareMathOperator\Bij{Bij}
\DeclareMathOperator\Isom{Isom}
\newcommand*{\orbite}{\mathcal O}
\newcommand*{\field}[1]{\mathbf{#1}}
\newcommand*{\category}[1]{\textbf{#1}}
\newcommand*{\PMet}{\category{PMet}}
\newcommand*{\CatS}{\category{S}}
\newcommand*{\Z}{\field{Z}}
\newcommand*{\N}{\field{N}}
\newcommand*{\R}{\field{R}}
\newcommand*{\C}{\field{C}}
\newcommand*{\BS}{B\textbf{S}}
\newcommand*{\BZ}{B\textbf{Z}}
\newcommand*{\FB}{FB\textsubscript{r}}
\newcommand*{\FH}{FH}
\newcommand*{\FW}{FW}
\newcommand*{\FA}{FA}
\newcommand*{\FR}{F\textbf{R}}
\newcommand*{\FS}{F\textbf{S}}
\newcommand{\setst}[2]{\{#1\ |\ #2\}}
\newcommand*{\powerset}[1]{\mathcal P(#1)}
\title{Wreath products of groups acting with bounded orbits}
\author{Paul-Henry Leemann\thanks{Supported by grant 200021\textunderscore188578 of the Swiss National Fund for Scientific Research.} $^{,1}$, Grégoire Schneeberger$^2$}
\date{%
{\small $^1$Institut de Math\'ematiques, Universit\'e de Neuch\^atel, 11 rue Emile-Argand, 2000~Neuch\^atel, Switzerland, \texttt{paulhenry.leemann@xjtlu.edu.cn}}\\%
{\small $^2$Section de math\'ematiques, Universit\'e de Gen\`eve, 7-9 rue du Conseil G\'en\'eral, 1205~Gen\`eve, Switzerland, \texttt{gregoire.schneeberger@unige.ch}}\\[2ex]%
\today}
\begin{document}
\maketitle
\begin{abstract}
If \CatS{} is a subcategory of metric spaces, we say that a group G has property~\BS{} if any isometric action on an \CatS-space has bounded orbits.
Examples of such subcategories include metric spaces, affine real Hilbert spaces, CAT(0) cube complexes, connected median graphs, trees or ultra-metric spaces.
The corresponding properties \BS{} are respectively Bergman's property, property~\FH{}  (which, for countable groups, is equivalent to the celebrated Kazhdan's property (T)), property~\FW{} (both for CAT(0) cube complexes and for connected median graphs), property~\FA{} and uncountable cofinality.
Historically many of these properties were defined using the existence of fixed points.

Our main result is that for many subcategories \CatS, the wreath product $G\wr_XH$ has property~\BS{} if and only if both $G$ and $H$ have property~\BS{} and $X$ is finite.
On one hand, this encompasses in a general setting previously known results for properties \FH{} and \FW.
On the other hand, this also applies to the Bergman's property.
Finally, we also obtain that $G\wr_XH$ has uncountable cofinality if and only if both $G$ and $H$ have uncountable cofinality and $H$ acts on $X$ with finitely many orbits.
\end{abstract}
%
%
%
%
%
%
%
%
%
%
%
%
%
%
%
%
%
%
%
%
%
%
%
%
%
%%%%%%%%%%%%%%%%%%%%%%%%%%%%%%%%%%%%%%%%%%%%%%%%%%%%%%%%%%%%%%%%%%%%%%%%%%
%%%%%%%%%%%%%%%%%%%%%%%%%%%%%%%%%%%%%%%%%%%%%%%%%%%%%%%%%%%%%%%%%%%%%%%%%%
%%%%%%%%%%%%%%%%%%%%%%%    Section : Introduction    %%%%%%%%%%%%%%%%%%%%%
%%%%%%%%%%%%%%%%%%%%%%%%%%%%%%%%%%%%%%%%%%%%%%%%%%%%%%%%%%%%%%%%%%%%%%%%%%
%%%%%%%%%%%%%%%%%%%%%%%%%%%%%%%%%%%%%%%%%%%%%%%%%%%%%%%%%%%%%%%%%%%%%%%%%%%
\section{Introduction}\label{Section:Intro}
When working with group properties, it is natural to ask if they are stable under ``natural'' group operations.
One such operation, of great use in geometric group theory, is the wreath product, see Section~\ref{Section:Def} for all the relevant definitions.

An \CatS-space is a metric space with an ``additional structure'' and we will say that a group $G$ has \emph{property~\BS} if every action by isometries which preserves the structure on an \CatS-space has bounded orbits.
Formally, this means that \CatS{} is a subcategory of the category of metric spaces, and that the actions are by \CatS-automorphisms.
We note that having one bounded orbit implies that all the orbits are bounded.

In the context of properties defined by actions with bounded orbits, the first result concerning wreath products, due to Cherix, Martin and Valette and later refined by Neuhauser, concerns Kazhdan's property~(T).
\begin{thm}[\cite{MR2106770,MR2176470}] \label{T:Wreath_prop_T}
Let $G$ and $H$ be two discrete groups with $G$ non-trivial and let $X$ be a set on which $H$ acts.
The wreath product $G \wr_X H$ has property~(T) if and only if $G$ and $H$ have property~(T) and $X$ is finite.
\end{thm}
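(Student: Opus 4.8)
\emph{Proof proposal.} The plan is to prove the two implications separately, using freely the Delorme--Guichardet correspondence between property~(T) and property~\FH{} for discrete groups (legitimate here because a group with property~(T) is finitely generated, hence countable) together with the standard permanence properties of~(T): it passes to quotients, to finite-index subgroups and to finite-index overgroups, and $A\times B$ has~(T) if and only if both $A$ and $B$ do. Write $W=G\wr_X H=\big(\bigoplus_{x\in X}G\big)\rtimes H$ and assume $X\neq\emptyset$ (for $X=\emptyset$ one has $W=H$).

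The bookkeeping half of the statement rests on one observation: when $X$ is finite, the kernel $H_0$ of the permutation action $H\to\Sym(X)$ has finite index in $H$, and since $H_0$ centralises $\bigoplus_X G$ in $W$, the subgroup $\big(\bigoplus_X G\big)\rtimes H_0$ is the internal direct product $L:=G^{|X|}\times H_0$ and has index $[H:H_0]<\infty$ in $W$. For the direction ``$\Leftarrow$'' I would argue: if $G$ and $H$ have~(T), then $G^{|X|}$ has~(T) (finite product) and $H_0$ has~(T) (finite index in $H$), hence $L$ has~(T), hence so does its finite-index overgroup $W$. For the easy part of ``$\Rightarrow$'' (recovering property~(T) of the factors once $X$ is known to be finite): the finite-index subgroup $L$ inherits~(T) from $W$, so its direct factors $G^{|X|}$ and $H_0$ have~(T), hence $G$ (a quotient of $G^{|X|}$) and $H$ (a finite-index overgroup of $H_0$) have~(T).

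The only substantial step is the remaining implication: if $W$ has~(T), then $X$ is finite. I would prove the contrapositive, exhibiting an affine isometric action of $W$ with unbounded orbits when $X$ is infinite. Since $G$ is non-trivial, pick a non-trivial unitary representation $(\pi,\mathcal K)$ of $G$ without nonzero invariant vectors---for instance the orthogonal complement of the constants in the regular representation $\ell^2(G)$---and a unit vector $v\in\mathcal K$; as $v$ is not $\pi(G)$-invariant, there is $g_0\in G$ with $c:=\|\pi(g_0)v-v\|>0$. On the Hilbertian sum $\mathcal H=\bigoplus_{x\in X}\mathcal K_x$ ($\mathcal K_x=\mathcal K$) let $\bigoplus_X G$ act coordinatewise through $\pi$ and let $H$ permute coordinates; this is a unitary representation of $W$. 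The map $b_0\big((g_x)_x\big)=(\pi(g_x)v-v)_x$ is well defined (only finitely many nonzero coordinates), is a $1$-cocycle for the restriction of this representation to $\bigoplus_X G$, and satisfies $b_0({}^hf)=\pi(h)b_0(f)$ for $h\in H$; hence $b(f,h):=b_0(f)$ is a $1$-cocycle on all of $W$ (with $H$ acting linearly). The orbit of the origin under the associated affine action is $\{b(\gamma):\gamma\in W\}$, and taking $f$ equal to $g_0$ on a finite set $F\subseteq X$ and trivial elsewhere gives $\|b_0(f)\|^2=c^2|F|$ with $|F|$ arbitrarily large because $X$ is infinite. So the action has no fixed point, $W$ does not have property~\FH{}, and therefore $W$ does not have property~(T).

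The main obstacle is precisely this last construction: one has to produce an unbounded affine isometric action of the \emph{whole} wreath product, and the key point is the $H$-equivariance of the coordinatewise cocycle $b_0$, which is what lets it be extended across the acting group~$H$. Everything else---the two bookkeeping directions---is routine given the permanence properties of~(T) recalled above; the only minor verifications are the well-definedness and the cocycle identities for $b_0$ and $b$, together with the observation that a group with~(T) is countable, so that the Delorme--Guichardet correspondence applies.
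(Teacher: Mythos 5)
Your proof is correct and follows essentially the same route as the paper: the paper only quotes this theorem from the literature, but its own machinery for property~\FH{} (Theorem~\ref{Thm:Technic}) establishes exactly what you establish --- your coordinatewise cocycle on $\bigoplus_{X}\mathcal{K}$ with $H$ permuting coordinates is the Hilbert-space instance of axiom (A3) and Lemma~\ref{Lemma:XFinite}, and your finite-index subgroup $G^{|X|}\times H_0$ is the ``more algebraic'' bookkeeping the paper itself records after Lemma~\ref{Lemma:Unboundedness} (together with Lemma~\ref{Lemma:Subgroup} and Corollary~\ref{Cor:Wreath}). The only extra ingredient is the Delorme--Guichardet reduction between (T) and \FH, which you apply correctly: Delorme's direction needs no countability hypothesis, and a discrete group with (T) is finitely generated, hence countable, exactly where the converse direction is invoked.
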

For countable groups (and more generally for $\sigma$-compact locally-compact topological groups), property (T) is equivalent, by the Delorme-Guichardet's Theorem, to property~\FH{} (every action on an affine real Hilbert space has bounded orbits), see~\cite[Thm. 2.12.4]{MR2415834}.
Hence, Theorem~\ref{T:Wreath_prop_T} can also be viewed, for countable groups, as a result on property~\FH.

The corresponding result for property~\FA{} (every action on a tree has bounded orbits) and property~\FR{} (every action on a real tree has bounded orbits), is a little more convoluted and was obtained a few years later by Cornulier and Kar.
\begin{thm}[\cite{MR2764930}]\label{Thm:FACK}
Let $G$ and $H$ be two groups with $G$ non-trivial and let $X$ be a set on which $H$ acts with finitely many orbits and without fixed points.
Then $G\wr_XH$ has property~\FA{} (respectively property~\FR) if and only if $H$ has property~\FA{} (respectively property~\FR), $G$ has no quotient isomorphic to $\Z$ and can not be written as a countable increasing union of proper subgroups.
\end{thm}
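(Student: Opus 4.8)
The plan is to combine Serre's characterisation of property~\FA{} — a group has \FA{} if and only if it has no quotient isomorphic to $\Z$, is not a countable strictly increasing union of proper subgroups, and is not a nontrivial amalgam — with the classification of group actions on trees; property~\FR{} is treated in parallel via the analogous theory of actions on $\R$-trees (Tits, Culler--Morgan). Throughout write $W=G\wr_X H=B\rtimes H$ with base group $B=\bigoplus_{x\in X}G$ and $x$-th copy $G_x\leq B$. The forward implication is easy: $H$ is a quotient of $W$, so inherits \FA{} (resp.\ \FR); a surjection $\varphi\colon G\twoheadrightarrow\Z$ gives the $H$-invariant homomorphism $B\to\Z$, $(g_x)_x\mapsto\sum_x\varphi(g_x)$, hence a surjection $W\twoheadrightarrow\Z$; and if $G=\bigcup_n G_n$ with each $G_n$ proper then $W=\bigcup_n\big((\bigoplus_X G_n)\rtimes H\big)$, an increasing union of proper ($H$-invariant) subgroups, is the forbidden union. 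For the converse I would first establish two algebraic facts about $W$. \emph{(i) $W$ has no quotient~$\Z$:} any homomorphism $W\to\Z$ kills every $G_x$ (as $G$ has no quotient $\Z$), hence kills $B$, hence factors through $H$, which has no quotient~$\Z$ since it has \FA. \emph{(ii) $W$ is not a countable increasing union of proper subgroups:} if $W=\bigcup_n W_n$, the increasing sequence of images $W_nB/B$ exhausts $H$, so by \FA{} of $H$ some $W_n$ surjects onto $H$; for a representative $x_i$ of each of the finitely many $H$-orbits one has $G_{x_i}\leq W_{n_i}$ for some $n_i$ (as $G$ is not such a union); and writing $w=bh\in W_{n_i}$ one computes $wG_{x_i}w^{-1}=G_{hx_i}$ (the component $b_{hx_i}$ of $b$ is absorbed, the remaining components commute with $G_{hx_i}$), so $G_x\leq W_{n_i}$ for every $x$ in the $H$-orbit of $x_i$; with $n=\max_i n_i$ this gives $B\leq W_n$, whence $W_n=W$, a contradiction. (Together, (i) and (ii) also show $G$ admits no nonzero homomorphism to $\R$, the form needed for \FR.)

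It then remains to prove that every action of $W$ on a tree $T$ has bounded orbits; suppose not. First, $W$ cannot fix an end: the Busemann homomorphism along a ray towards that end is a quotient~$\Z$ if nonzero (excluded by (i)), and if zero it expresses $W$ as the increasing union of the stabilisers of the vertices of the ray (excluded by (ii)). Nor can $W$ stabilise a line $L$: the image $Q$ of $W$ in $\Aut(L)$, the infinite dihedral group $D_\infty$, has no quotient~$\Z$ by (i), so $Q$ is finite — in which case $W$ fixes a point since finite groups have \FA{} — or $Q\cong D_\infty$; in the latter case the image of the normal subgroup $B$ is a normal subgroup of $D_\infty$ with no quotient~$\Z$, so is trivial — forcing $H\twoheadrightarrow D_\infty$, impossible by \FA{} of $H$ — or is itself $\cong D_\infty$, and since the pairwise-commuting images of the $G_x$ generate it, one of them, say that of $G_{x_0}$, is already $\cong D_\infty$ while the others are trivial; choosing $x'\neq x_0$ in the $H$-orbit of $x_0$ (possible since $H$ has no fixed point on $X$), the conjugate $G_{x'}$ acts with nontrivial image on $L$ yet commutes with $G_{x_0}$, contradicting that a $D_\infty$-subgroup of $D_\infty$ is self-centralising. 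Hence $W$, and therefore its subgroup $B$, is of general type on $T$.

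Now examine the copies $G_x$. If all of them are elliptic, then so is every element of $B$ (it has finite support, so it is a product of finitely many commuting elliptic elements, hence elliptic); thus $B$ has no hyperbolic element, so it fixes a point or an end; a fixed point gives a nonempty $W$-invariant subtree on which $W$ acts through $H$, hence a $W$-fixed point, while a fixed end is necessarily unique (otherwise $B$ would fix a point of an invariant line) and therefore $W$-fixed — both impossible by the previous paragraph. So some $G_{x_0}$ is non-elliptic. If $G_{x_0}$ fixes an end or stabilises a line, its centraliser $\bigoplus_{x\neq x_0}G$ preserves the associated canonical object — the unique $G_{x_0}$-fixed end, respectively the unique $G_{x_0}$-invariant line — so $B$, and hence $W$, fixes that end or stabilises that line, which is impossible. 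Therefore $G_{x_0}$ is of general type, with a unique minimal invariant subtree $T_{x_0}$, which is not a line. The centraliser $C=\bigoplus_{x\neq x_0}G$ preserves $T_{x_0}$; if $C$ acted non-elliptically on $T_{x_0}$ it would fix an end, stabilise a line, or be of general type there, and in each case, commuting with $G_{x_0}$ (which acts minimally and of general type on $T_{x_0}$), it would force $G_{x_0}$ to preserve an end or a proper subtree of $T_{x_0}$ — impossible — so $C$ fixes $T_{x_0}$ pointwise. Finally choose $x'\neq x_0$ in the $H$-orbit of $x_0$: then $G_{x'}\leq C$ fixes $T_{x_0}$ pointwise, but $G_{x'}$, being a $W$-conjugate of $G_{x_0}$, acts minimally and of general type on its own subtree $T_{x'}$; hence $T_{x'}\cap T_{x_0}=\emptyset$ (else $G_{x'}$ would fix a point of $T_{x'}$), and then $G_{x_0}$ — preserving both $T_{x_0}$ and $T_{x'}$, since it commutes with $G_{x'}$ — fixes the projection of $T_{x'}$ onto $T_{x_0}$, a point of $T_{x_0}$, contradicting minimality. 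This exhausts all cases, so $W$ has \FA.

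The argument for \FR{} runs along the same lines over $\R$-trees, with $\Isom(\R)$ in place of $D_\infty$ and the structure theory of Tits and Culler--Morgan in place of Serre's; the only place where one needs ``$G$ has no nonzero homomorphism to $\R$'' rather than merely ``no quotient~$\Z$'' is in the line- and end-fixing cases, and that implication is supplied by (i) and (ii). The main obstacle is the general-type case in the preceding paragraph. The normal subgroup $B=\bigoplus_X G$ has no reason at all to be elliptic on $T$ by itself — already $G$ may act on a tree without a fixed point — so one must exploit the rigid interaction among the commuting copies of $G$ together with the decisive hypothesis that $H$ acts on $X$ without fixed points: this is what guarantees that every coordinate has a commuting ``twin'' in its $H$-orbit, and it is precisely this twin that collapses the putative general-type action.
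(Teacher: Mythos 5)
The first thing to say is that the paper does not prove Theorem~\ref{Thm:FACK} at all: it is imported from Cornulier--Kar \cite{MR2764930}, and the only argument the paper supplies in its vicinity is the equivalence, for a group of uncountable cofinality, between having finite abelianization and having no quotient isomorphic to $\Z$ (the reformulation of the hypothesis on $G$). So there is no internal proof to compare yours against; what you have written is in effect a reconstruction of the original Cornulier--Kar argument, and it is essentially sound. The forward direction and your two algebraic facts (i) and (ii) about $W=G\wr_XH$ are correct (in (ii), finiteness of $X/H$ and the conjugation formula $wG_{x_i}w^{-1}=G_{h.x_i}$ are used exactly where they should be), and the tree argument --- excluding a fixed end via the Busemann homomorphism together with uncountable cofinality, excluding an invariant line via the structure of $D_\infty$ and the pairwise-commuting copies of $G$, and finally killing the general-type case by playing $G_{x_0}$ against a commuting twin $G_{x'}$ in the same $H$-orbit --- is the right mechanism and is precisely where the ``no fixed point on $X$'' hypothesis enters. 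Three points should be repaired or acknowledged. First, the sentence ``hence $W$, and therefore its subgroup $B$, is of general type'' is unjustified (a normal subgroup of a general-type group need not be of general type); fortunately you never use it, since you restart the case analysis on $B$ from scratch in the next paragraph. Second, your closing contradiction is longer than needed: once $G_{x'}$ fixes the nonempty subtree $T_{x_0}$ pointwise it has a global fixed point, which already contradicts its being of general type, with no need to discuss $T_{x'}\cap T_{x_0}$ or projections. Third, the assertion that $G$ has no nonzero homomorphism to $\R$ does not follow from (i) and (ii), which are statements about $W$; it follows from the hypotheses on $G$ via the argument the paper gives in the introduction (uncountable cofinality plus no quotient $\Z$ forces finite abelianization). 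Relatedly, the \FR{} half is only asserted by analogy: the Culler--Morgan classification, the horocyclic case for $\R$-trees, and the step ``all elements elliptic implies a fixed point or a fixed end'' each need separate (known but not free) justification there, so that half of the statement remains a sketch rather than a proof.
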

Observe that our statement of Theorem~\ref{Thm:FACK} differs of the original statement of~\cite{MR2764930}.
Indeed, where we ask $G$ to have uncountable cofinality and no quotient isomorphic to $\Z$, the authors of~\cite{MR2764930} ask $G$ to have uncountable cofinality and finite abelianization.
However, these two sets of conditions are equivalent.
One implication is trivial, as finite Abelian groups do not project onto $\Z$.
For the other implication, suppose that $G$ has uncountable cofinality but infinite abelianization $G/[G,G]$. The group $G/[G,G]$ being an infinite Abelian group, it has a countably infinite quotient $A$ --- a classical fact of which Y. de Cornulier kindly reminded us, see \cite{MR551496}[§16.11.c] for a proof. The quotient $A$ has uncountable cofinality, see Lemma~\ref{Lemma:Quotient}, and is therefore an infinite finitely generated Abelian group, which hence projects onto $\Z$.
%The other implication follows for countable Abelian groups from the structure theorem of finitely generated Abelian groups.
%For the general case, Y. Cornulier kindly reminded us that any infinite Abelian group has a countable quotient; the conclusion hence follows from the countable case.

Finally, we have an analogous of Theorem~\ref{T:Wreath_prop_T} for property~\FW{} (every action on a CAT(0) cube complexe has bounded orbits):
\begin{thm}[\cite{Cornulier2013,LS2020}]\label{Thm:PropFW}
Let $G$ and $H$ be two groups with $G$ non-trivial and let $X$ be a set on which $H$ acts.
Suppose that all three of $G$, $H$ and $G\wr_XH$ are finitely generated. Then the wreath product $G \wr_X H$ has property~\FW{} if and only if $G$ and $H$ have property~\FW{} and $X$ is finite.
\end{thm}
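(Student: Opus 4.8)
The plan is to reduce everything to Yves de Cornulier's characterization of property~\FW{} via commensurating actions: a finitely generated group~$G$ has property~\FW{} if and only if every commensurating action of~$G$ is transfixing --- that is, every commensurated subset of a $G$-set differs from a $G$-invariant subset by a finite set~\cite{Cornulier2013}. Together with the standard permanence properties of~\FW{} (stability under quotients, under passing to or from finite-index subgroups, and under finite direct products), this turns both directions of the theorem into the analysis of a single explicit action of $G\wr_X H$. I will tacitly assume $X\neq\emptyset$, as is customary for wreath products.

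\emph{An explicit commensurating action.} First I would consider $\Omega = X\times G$ with the action of $G\wr_X H=G^{(X)}\rtimes H$ given by $h\cdot(x,g)=(hx,g)$ for $h\in H$ and $f\cdot(x,g)=(x,f(x)g)$ for $f\in G^{(X)}$; a direct check shows this is a well-defined action. The subset $M=X\times\{1_G\}$ is commensurated: it is preserved by $H$, and for $f\in G^{(X)}$ one has $|fM\triangle M|=2|\operatorname{supp} f|<\infty$. Now a subset of $\Omega$ stable under $G^{(X)}$ meets each fibre $\{x\}\times G$ either trivially or in the whole fibre (the action of $G^{(X)}$ on a fibre is by left translations of~$G$), so a $(G\wr_X H)$-invariant subset has the form $S\times G$ with $S\subseteq X$ an $H$-invariant subset; then $M\triangle(S\times G)=\big((X\setminus S)\times\{1_G\}\big)\sqcup\big(S\times(G\setminus\{1_G\})\big)$, which is finite exactly when both $X\setminus S$ and $S$ are finite (here I use $G\neq 1$), hence exactly when $X$ is finite. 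Thus if $X$ is infinite, $M$ is commensurated but not transfixed, and Cornulier's characterization gives that $G\wr_X H$ fails~\FW{}. This settles the ``only if'' part concerning finiteness of~$X$.

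\emph{Reduction to a direct product when $X$ is finite.} Assume $X$ is finite with $n=|X|\ge 1$. Let $H_0\trianglelefteq H$ be the kernel of the permutation action $H\to\Sym(X)$; it has finite index, and since $H_0$ acts trivially on the coordinates, $G^{(X)}\rtimes H_0=G^n\times H_0$ is a finite-index subgroup of $G\wr_X H$. By the permanence properties above, $G\wr_X H$ has~\FW{} if and only if $G^n\times H_0$ does, which in turn holds if and only if $G$ and $H_0$ both have~\FW{} (a finite direct product has~\FW{} iff each factor does, each factor being a quotient of the product). Finally $H_0$ has~\FW{} iff $H$ does, since $H_0$ has finite index in~$H$. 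Combining: if $G$ and $H$ have~\FW{} and $X$ is finite then $G\wr_X H$ has~\FW{}; conversely, if $G\wr_X H$ has~\FW{} then $X$ is finite by the previous paragraph and then $G$ and $H$ have~\FW{} by the present one. This proves Theorem~\ref{Thm:PropFW}.

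The only genuinely non-formal ingredient is Cornulier's dictionary between unbounded actions on CAT(0) cube complexes and non-transfixed commensurated subsets, and this is where the finite-generation hypothesis enters; everything else is bookkeeping of permanence properties together with the short computation for the action on $X\times G$. If one instead wanted a self-contained argument, one would build from the wall structure on $\Omega$ cut out by $M$ the associated CAT(0) cube complex (à la Sageev) and check directly that the orbit of $M$ is unbounded for its combinatorial metric when $X$ is infinite --- I expect this, rather than anything in the finite-$X$ case, to be the main technical point.
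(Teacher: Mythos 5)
Your proof is correct, and it takes a genuinely different route from the paper. The paper does not prove Theorem~\ref{Thm:PropFW} in isolation: it recovers the \FW{} case (in fact without any finite generation hypothesis) from the axiomatic Theorem~\ref{Thm:Technic}. For the implication ``$G\wr_XH$ has \FW{} $\Rightarrow$ $X$ finite'' the paper invokes axiom (A3): it takes a median graph on which $G$ moves a point and exhibits an unbounded orbit in the infinite Cartesian power (Lemma~\ref{Lemma:XFinite}); your commensurated subset $M=X\times\{1_G\}$ of $\Omega=X\times G$ is essentially that same unbounded orbit, viewed inside the median graph $\powerset{\Omega}$ of Example~\ref{Ex:MainMedian} and repackaged through Cornulier's transfixing criterion. (You could even bypass the classification of invariant subsets: the displacement $\lvert fM\triangle M\rvert=2\lvert\operatorname{supp}f\rvert$ is already unbounded, which directly gives an unbounded orbit in $\powerset{\Omega}$ without needing the nontrivial half of the ``transfixed iff bounded displacement'' equivalence. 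Also, your parenthetical ``finite exactly when both $X\setminus S$ and $S$ are finite'' is slightly imprecise when $G$ is infinite --- one then needs $S=\emptyset$ --- but the implication you actually use is valid.) For finite $X$, your passage to the finite-index subgroup $G^{\lvert X\rvert}\times H_0$ is precisely the ``more algebraic proof'' the paper sketches right after Lemma~\ref{Lemma:Unboundedness}; the paper's main line instead uses the decomposition $G\wr_XH=(\bigoplus_XG)H$ with Lemma~\ref{Lemma:InterProd} for one direction and the primitive action on a finite Cartesian power (axiom (A2), Lemma~\ref{Lemma:Unboundedness}) for the other. Your approach buys concreteness and a clean appeal to an existing characterization of \FW{}; the paper's buys uniformity over Bergman's property, \FB, \FH{} and \FW{} simultaneously, plus removal of the finite generation hypothesis. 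Note finally that your argument never really uses finite generation either, since the equivalence between \FW{} and the transfixing property holds for arbitrary discrete groups; your closing remark that this is where the hypothesis enters is therefore not quite accurate, though harmless given that the hypothesis appears in the statement.
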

It is straightforward to prove that the wreath product $G \wr_X H$ is finitely generated if and only if both $G$ and $H$ are finitely generated and the number of orbits of the action of $H$ on $X$ is finite.

Theorem~\ref{Thm:PropFW} was first proved, using \emph{cardinal definite functions}, for arbitrary groups by Cornulier \cite[Propositions 5.B.3 and 5.B.4]{Cornulier2013}, but without the implication ``if $G \wr_X H$ has property~\FW, then $G$ has property~\FW''.
The authors then gave an elementary proof of it via \emph{Schreier graphs} for the specific case of finitely generated groups \cite{LS2020}.
Y. Stalder has let us know (private communication) that, using \emph{space with walls} instead of Schreier graphs, the arguments of~\cite{LS2020} can be adapted to replace the finite generation hypothesis of Theorem~\ref{Thm:PropFW} by the condition that all three of $G$, $H$ and $X$ are at most countable.
Finally, A. Genevois published in~\cite{2017arXiv170500834G} a proof of Theorem~\ref{Thm:PropFW} for wreath products of the form $G\wr_HH$, based on his \emph{diadem product of spaces}.

The above results on properties \FH, \FW{} and \FA{} were obtained with distinct methods even if the final results have a common flavor.
In the same time, all three properties \FH, \FW{}  and \FA{} can be characterized by the fact that any isometric action on a suitable metric space (respectively affine real Hilbert space, connected median graph and tree) has bounded orbits, see Definition~\ref{Def:FHFA}.
But more group properties can be characterized in terms of actions with bounded orbits.
This is, for example, the case of the Bergman's property (actions on metric spaces), the property~\FB{} (actions on reflexive real Banach spaces) or of uncountable cofinality (actions on ultrametric spaces).

By adopting the point of view of actions with bounded orbits, we obtain a unified proof of the following result; see also Theorem~\ref{Thm:Technic} for the general (and more technical) statement.
\begin{mainthm}\label{Thm:Main}
Let \BS{} be any one of the following properties: Bergman's property, property~\FB, property~\FH{} or property~\FW.
Let $G$ and $H$ be two groups with $G$ non-trivial and let $X$ be a set on which $H$ acts.
Then the wreath product $G \wr_X H$ has property~\BS{} if and only if $G$ and $H$ have property~\BS{} and $X$ is finite.
\end{mainthm}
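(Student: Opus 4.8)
We write $W=G\wr_X H=B\rtimes H$ with $B=\bigoplus_{x\in X}G$ the base group, so $W/B\cong H$; all actions below are on \CatS-spaces by \CatS-automorphisms, and the biconditional decomposes into three necessity implications and the sufficiency implication. That $H$ has property~\BS{} is immediate, since $H$ is a quotient of $W$ and property~\BS{} passes to quotients (an action of $H$ pulls back to an action of $W$ with the same orbits). The necessity of ``$X$ finite'' is proved contrapositively: assuming $X$ infinite and fixing $1\neq g_0\in G$, one builds a \CatS-space on which $B$, hence $W$, acts with unbounded orbits. For Bergman's property, property~\FB{} and property~\FH{} one can take a restricted $\ell^2$-sum over $X$ of copies of a fixed nontrivial action of $G$ (e.g.\ the regular representation on $\ell^2(G)$), with $B$ acting coordinatewise and $H$ permuting coordinates along $H\curvearrowright X$; letting the $g_0$-move range over a large finite set of coordinates pushes the base point arbitrarily far. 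For property~\FW{} the corresponding object is the space with walls on $B$ whose walls are the sets $\{b\in B:b_x=g\}$ for $x\in X$, $g\in G$, for which the wall distance from the identity to $b$ equals $2\,|\mathrm{supp}(b)|$. These are, in essence, the mechanisms underlying Theorems~\ref{T:Wreath_prop_T} and~\ref{Thm:PropFW}.

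The last necessity, that $G$ has property~\BS, is handled uniformly by an amplification argument. If $G$ lacks property~\BS, fix an action of $G$ on a \CatS-space $Y$ with base point $y_0$ such that $\sup_{g\in G}d(gy_0,y_0)=\infty$, and let $Y_\ast$ be the restricted product over $X$ of pointed copies of $(Y,y_0)$ --- the Hilbert sum, the $\ell^2$-Banach sum, the $\ell^1$-metric sum, or the restricted cubical product, according to the category --- whose points are the families $(y_x)_{x}$ equal to $y_0$ in all but finitely many coordinates. Then $Y_\ast$ is again a \CatS-space; $B$ acts on it coordinatewise through the given actions of the copies of $G$, $H$ acts by permuting coordinates, and these combine into an action of $W=B\rtimes H$ by \CatS-automorphisms. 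Moving a single coordinate shows the $B$-orbit of the base point of $Y_\ast$ is already unbounded, so $W$ lacks property~\BS. What makes this argument work simultaneously in all four cases is precisely that each category is closed under the relevant restricted-sum construction and that coordinate permutations are \CatS-automorphisms of it.

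For sufficiency, suppose $G$ and $H$ have property~\BS{} and $X$ is finite, so $B=G^{X}$ is a finite direct power of $G$. Property~\BS{} is stable under finite direct products: if $G_1\times G_2$ acts on a \CatS-space $Y$ and $y_0\in Y$, then writing $(g_1,g_2)=(g_1,e)(e,g_2)$ and using that the left factor acts by isometries gives $d((g_1,g_2)y_0,y_0)\le\diam(G_2 y_0)+\diam(G_1 y_0)<\infty$, each factor having property~\BS{} and acting on the same \CatS-space. Hence $B$ has property~\BS, and it remains to treat the extension $1\to B\to W\to H\to1$. Let $W$ act on a \CatS-space $Y$ and fix $y_0$; then $By_0$ is bounded, and each of the four categories admits a \emph{centre construction} producing from this a point fixed by $B$: the circumcentre of $By_0$ for affine Hilbert spaces and for CAT(0) cube complexes (Bruhat--Tits), a common fixed point of $B$ obtained from weak compactness of the closed convex hull of $By_0$ together with a Chebyshev-centre argument for reflexive Banach spaces, and the bounded set $By_0$ itself, viewed as a point of the space $\mathcal B(Y)$ of bounded subsets with the Hausdorff metric, for general metric spaces (where $\mathcal B(Y)$ is again a \CatS-space). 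Call this point $p$. Since $B$ is normal it fixes every point of the $W$-orbit of $p$, so $H=W/B$ acts on that orbit inside a \CatS-space; as $H$ has property~\BS, the orbit $Wp$ is bounded, and a short triangle-inequality estimate through $p$ (read with the Hausdorff metric in the metric-space case) bounds $d(wy_0,y_0)$ uniformly in $w\in W$. Thus $W$ has property~\BS.

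The difficulty is not concentrated in any one of these steps but in making them cohere \emph{uniformly}: isolating the structural hypotheses on the category \CatS{} --- closure under the relevant restricted sums, invariance of coordinate permutations, and the availability of a centre construction sending bounded orbits to fixed points inside the category --- that are at once strong enough to drive all of the above and verifiable for each of affine Hilbert spaces, reflexive Banach spaces, metric spaces and CAT(0) cube complexes; this is exactly the role of Theorem~\ref{Thm:Technic}. Among the required verifications I expect the subtlest to be the necessity of ``$X$ finite'' for property~\FW: median graphs need not be locally finite, so a bounded orbit need not be finite, and the elementary construction above is in effect the combinatorial shadow of Cornulier's cardinal-definite-function method rather than something one proves by a naive fixed-point argument.
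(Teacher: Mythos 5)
Your three necessity implications essentially match the paper's route: $H$ inherits \BS{} as a quotient (Lemma~\ref{Lemma:Quotient}), the finiteness of $X$ is forced by letting the support of an element of the base group grow inside an infinite restricted power (this is Lemma~\ref{Lemma:XFinite}, where the paper axiomatizes your $\ell^2$-sum and wall-space constructions as axioms (A1) and (A3)), and the necessity of \BS{} for $G$ comes from a diagonal embedding of an unbounded $G$-orbit into a Cartesian power (Lemma~\ref{Lemma:Unboundedness}, axiom (A2); the paper does this after reducing to $X$ finite, whereas you use an infinite restricted sum, but both work). The uniform mechanism the paper isolates in Theorem~\ref{Thm:Technic} is exactly closure under bornological Cartesian powers --- and \emph{not}, as your closing paragraph suggests, the availability of a centre construction.

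The genuine problem is your sufficiency argument. First, it is needlessly heavy: the paper's Lemma~\ref{Lemma:InterProd} shows that if $W=AB$ with $A$ and $B$ subgroups and both $(W,A)$ and $(W,B)$ have relative \BS, then for any $W$-action and any point $x$ one has $\diam(W.x)\leq\diam(A.x)+\diam(B.x)$, a two-line triangle-inequality estimate that never leaves the original space and needs no fixed point, no completeness, and no convexity; applied to $W=(\bigoplus_XG)\rtimes H$ with $X$ finite this gives Corollary~\ref{Cor:Wreath}.2 at once. Second, your centre construction actually fails for property~\FW: a group acting on a connected median graph with a bounded orbit has a \emph{finite} orbit but in general no fixed vertex (witness $\Z/4\Z$ rotating the square); the Bruhat--Tits circumcentre lives in the CAT(0) realization, which need not be complete when the cube complex is infinite-dimensional and non-locally-finite; and even granting a point $p$ fixed by the base group $B$, the set of $B$-fixed points on which $H=W/B$ would act is a CAT(0) convex subset, not visibly a connected median graph, so the hypothesis that $H$ has property~\FW{} cannot be invoked on it. (Your Hausdorff-metric fallback has the same defect outside of Bergman's property: $\mathcal B(Y)$ is merely a metric space, so only a group with Bergman's property is guaranteed bounded orbits there.) Replacing this entire step by Lemma~\ref{Lemma:InterProd} closes the gap and is what makes the paper's proof uniform across all four categories.
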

With a little twist, we also obtain a similar result for groups with uncountable cofinality:
\begin{mainprop}\label{Prop:UncCoun}
Let $G$ and $H$ be two groups with $G$ non-trivial and let $X$ be a set on which $H$ acts.
Then the wreath product $G \wr_X H$ has uncountable cofinality if and only if $G$ and $H$ have uncountable cofinality and $H$ acts on $X$ with finitely many orbits.
\end{mainprop}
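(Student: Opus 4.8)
The plan is to treat the two implications separately, using throughout the description of uncountable cofinality as: a group $\Gamma$ has uncountable cofinality if and only if, whenever $\Gamma=\bigcup_{n\ge 1}\Gamma_n$ for an increasing chain of subgroups $\Gamma_n\le\Gamma_{n+1}$, one already has $\Gamma_n=\Gamma$ for some $n$; together with the fact, recorded in Lemma~\ref{Lemma:Quotient}, that uncountable cofinality passes to quotients. Write $B=\bigoplus_{x\in X}G$ for the base group, so that $G\wr_XH=B\rtimes H$, and for $x\in X$ write $G^{(x)}\le B$ for the copy of $G$ supported at the coordinate $x$; recall that $B$ is generated by the subgroups $G^{(x)}$ and that $h\cdot G^{(x)}=G^{(hx)}$ under the action of $H$ on $B$. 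We may assume $X\neq\emptyset$.

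For the ``only if'' direction I would argue by contraposition on each of the three conclusions. Since $H$ is a quotient of $G\wr_XH$, it has uncountable cofinality by Lemma~\ref{Lemma:Quotient}. If $G=\bigcup_nG_n$ with $G_n\lneq G$ increasing, then the subgroups $(\bigoplus_{x\in X}G_n)\rtimes H$ form an increasing chain of proper subgroups of $G\wr_XH$ with union $G\wr_XH$ (properness uses $X\neq\emptyset$), so $G\wr_XH$ would not have uncountable cofinality; hence if it does, $G$ has uncountable cofinality. Finally, if $H$ acts on $X$ with infinitely many orbits, choose a countably infinite subfamily of orbits $O_1,O_2,\dots$, put $Y=\bigcup_kO_k$ and $Z_n=O_1\cup\dots\cup O_n$. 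As $X\setminus Y$ is $H$-invariant, $\bigoplus_{x\in X\setminus Y}G$ is normal in $G\wr_XH$ with quotient $G\wr_YH$; and the subgroups $(\bigoplus_{x\in Z_n}G)\rtimes H$ (each $Z_n$ being $H$-invariant) exhibit $G\wr_YH$ as an increasing union of proper subgroups, so this quotient---and hence, by Lemma~\ref{Lemma:Quotient}, $G\wr_XH$ itself---fails to have uncountable cofinality. Thus if $G\wr_XH$ has uncountable cofinality, $H$ acts on $X$ with finitely many orbits.

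The substance is the ``if'' direction, which I would prove by contradiction: assume $G$ and $H$ have uncountable cofinality, that $X$ is a union of finitely many $H$-orbits with representatives $x_1,\dots,x_r$, and that $G\wr_XH=\bigcup_nW_n$ for an increasing chain of proper subgroups $W_n$. Projecting to $H\cong(G\wr_XH)/B$, the images of the $W_n$ increase to $H$, so uncountable cofinality of $H$ yields $N_0$ with $W_nB=G\wr_XH$ for $n\ge N_0$; that is, $W_n$ surjects onto $H$. Separately, for each $i$ the subgroups $W_n\cap G^{(x_i)}$ form an increasing chain of subgroups of $G^{(x_i)}\cong G$ with union $G^{(x_i)}$, so uncountable cofinality of $G$ yields $N_i$ with $G^{(x_i)}\le W_n$ for $n\ge N_i$. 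Now fix $n\ge\max(N_0,\dots,N_r)$: for an arbitrary $x\in X$, writing $x=hx_i$ and picking $w=bh\in W_n$ (with $b\in B$) lying above $h$, one gets $G^{(x)}=b\,(h\cdot G^{(x_i)})\,b^{-1}=wG^{(x_i)}w^{-1}\le W_n$, where the first equality uses $h\cdot G^{(x_i)}=G^{(x)}$ and the fact that conjugation by $b$ acts on the coordinate $x$ as an inner automorphism of $G$ and trivially on the others. Hence $G^{(x)}\le W_n$ for every $x\in X$, so $B\le W_n$, so $W_n=W_nB=G\wr_XH$, contradicting properness.

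The only genuine difficulty is this last step's treatment of the base group: for $X$ infinite, $B=\bigoplus_XG$ need not itself have uncountable cofinality (already when $X$ is countably infinite it does not), so one cannot simply combine uncountable cofinality of $B$ and $H$ through the short exact sequence $1\to B\to G\wr_XH\to H\to 1$. The resolution above is to extract only a single copy $G^{(x_i)}$ of $G$ into some $W_n$---which costs nothing, by uncountable cofinality of $G$---and then transport it across its whole orbit using elements of $W_n$ sitting above $H$; having only finitely many orbits is precisely what lets a single index $n$ serve all the representatives $x_1,\dots,x_r$ simultaneously, after which $W_n$ absorbs all of $B$. The remaining verifications---the $H$-invariance of the relevant sub- and quotient groups, and that conjugation by a base element fixes each coordinate subgroup---I expect to be routine.
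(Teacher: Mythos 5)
Your proof is correct, and it takes the route the paper explicitly mentions but deliberately does not write out: the paper remarks just before its proof of Proposition~\ref{Prop:UncCoun} that one can argue via the subgroup characterization of uncountable cofinality (Lemma~\ref{Lemma:CofSub}), ``in which case it is an easy exercise,'' and instead gives a proof entirely in terms of isometric actions on ultrametric spaces, to stay within the unified bounded-orbits framework of the article. Concretely, for the ``only if'' direction the paper builds explicit invariant ultrametrics ($d_{\max}$ on an infinite direct sum in Lemma~\ref{Lemma:Cof}, and $d_\infty$ on $\bigoplus_X G$ for the primitive action to handle $G$), where you exhibit explicit increasing chains of proper subgroups such as $(\bigoplus_X G_n)\rtimes H$ and $(\bigoplus_{Z_n}G)\rtimes H$; for the ``if'' direction the paper takes an arbitrary ultrametric $G\wr_XH$-space and uses the strong triangle inequality to bound the orbit of $\bigoplus_XG$ by the maximum of the diameters of the finitely many $G_{x_i}$-orbits, transported across each $H$-orbit by conjugation, where you absorb a single coordinate copy $G^{(x_i)}$ into some $W_n$ and transport it across its orbit by conjugating with elements of $W_n$ lying above $H$. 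The two arguments exploit the finitely-many-orbits hypothesis at exactly the same structural point (one index, respectively one bound, serves all orbit representatives simultaneously, after which the whole base group is absorbed), and your observation that $\bigoplus_XG$ itself need not have uncountable cofinality is precisely the obstruction the paper's ultrametric estimate is designed to circumvent. Your version is the more elementary and self-contained one; the paper's buys coherence with the rest of its results (and reuses Corollary~\ref{Cor:Wreath} and Lemma~\ref{Lemma:Cof} wholesale), at the cost of some metric bookkeeping.
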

A crucial ingredient of our proofs is that the spaces under consideration admit a natural notion of Cartesian product.
In particular, some of our results do not work for trees and property~\FA, nor do they for real trees and the corresponding property~\FR.
Nevertheless, we are still able to show that if $G\wr_XH$ has property~\FA, then $H$ acts on $X$ with finitely many orbits.
Combining this with Theorem~\ref{Thm:FACK} we obtain the following theorem.
\begin{mainthm}\label{Thm:FAFiniteOrbits}
Let $G$ and $H$ be two groups with $G$ non-trivial and $X$ a set on which $H$ acts. Suppose that $H$ acts on $X$ without fixed points.
Then $G\wr_XH$ has property~\FA{} (respectively has property~\FR) if and only if $H$ has property~\FA{} (respectively has property~\FR), $H$ acts on $X$ with finitely many orbits, $G$ has no quotient isomorphic to $\Z$ and can not be written as a countable increasing union of proper subgroups.
\end{mainthm}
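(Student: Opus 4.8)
The plan is to obtain Theorem~\ref{Thm:FAFiniteOrbits} by combining the theorem of Cornulier and Kar (Theorem~\ref{Thm:FACK}) with Proposition~\ref{Prop:UncCoun}. The only point not already covered by these two results is the implication that if $G\wr_XH$ has property~\FA{} (respectively property~\FR), then $H$ acts on $X$ with finitely many orbits; once this \emph{bridge lemma} is in hand, the theorem follows by matching hypotheses. We keep the standing assumption that $G$ is non-trivial, and we assume, as is implicit in the statement, that $X$ is non-empty (otherwise $G\wr_XH=H$). We shall use the classical fact that a group which can be written as a countable increasing union of proper subgroups admits an action on a simplicial tree, hence also on a real tree, with unbounded orbits; in particular each of property~\FA{} and property~\FR{} implies uncountable cofinality.

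I would prove the bridge lemma by contraposition. Suppose $H$ acts on $X$ with infinitely many orbits. Since $G$ is non-trivial, Proposition~\ref{Prop:UncCoun} (read in the form: uncountable cofinality of $G\wr_XH$ forces $H$ to act with finitely many orbits) shows that $G\wr_XH$ does not have uncountable cofinality, that is, it is a countable increasing union of proper subgroups. By the fact recalled above, $G\wr_XH$ then has neither property~\FA{} nor property~\FR, which is exactly the contrapositive of the bridge lemma.

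Next I would assemble the equivalence, handling \FA{} and \FR{} in parallel. For the backward implication, assume that $H$ has property~\FA{} (resp.\ \FR), that $H$ acts on $X$ with finitely many orbits, that $G$ has no quotient isomorphic to $\Z$, and that $G$ is not a countable increasing union of proper subgroups; moreover $H$ acts without fixed points by hypothesis. All the hypotheses of Theorem~\ref{Thm:FACK} are then met, and that theorem gives directly that $G\wr_XH$ has property~\FA{} (resp.\ \FR). For the forward implication, assume $G\wr_XH$ has property~\FA{} (resp.\ \FR). The bridge lemma gives that $H$ acts on $X$ with finitely many orbits; since $H$ also acts without fixed points, Theorem~\ref{Thm:FACK} applies, and its ``only if'' direction yields at once that $H$ has property~\FA{} (resp.\ \FR), that $G$ has no quotient isomorphic to $\Z$, and that $G$ is not a countable increasing union of proper subgroups. (That $H$ inherits the property can also be seen directly: $H$ is the quotient of $G\wr_XH$ by its base group, and properties~\FA{} and \FR{} pass to quotients, cf.\ Lemma~\ref{Lemma:Quotient}.) Adjoining the orbit-finiteness statement just obtained, this is precisely the list of conditions appearing in the theorem.

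The main obstacle is the bridge lemma, and the reason it requires a separate argument is the one anticipated in the introduction: trees and real trees do not carry the Cartesian product structure that drives the proof of Theorem~\ref{Thm:Main}, so one cannot simply rerun that proof to detect infinitely many $H$-orbits. The way around this is to notice that the relevant obstruction is already visible at the coarser level of uncountable cofinality --- and ultrametric spaces \emph{do} admit the needed product construction --- so that Proposition~\ref{Prop:UncCoun} can be used as a black box. Once this is granted, Theorem~\ref{Thm:FAFiniteOrbits} follows from Theorem~\ref{Thm:FACK} and Proposition~\ref{Prop:UncCoun} with no further computation.
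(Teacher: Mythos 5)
Your proposal is correct and follows essentially the same route as the paper: the paper derives the ``bridge'' step (property~\FA{} or \FR{} for $G\wr_XH$ forces finitely many $H$-orbits on $X$) from Corollary~\ref{Cor:Cof}, which is exactly the observation you make via Proposition~\ref{Prop:UncCoun} together with the implication $[\textnormal{\FA}\implies\textnormal{uncountable cofinality}]$, and then both arguments conclude by invoking Theorem~\ref{Thm:FACK} in each direction.
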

%
%
%
%
%\paragraph{Organization of the paper}
%The next section contains all the definitions as well as some examples. Section~\ref{Section:Proof} is devoted to the proof of Theorem~\ref{Thm:Main} and Proposition~\ref{Prop:UncCoun} as well as to some related results.
%%
%
%
\paragraph{Acknowledgment}
The authors are thankful to P. de la Harpe, A. Genevois, T. Nagnibeda and A. Valette for helpful comments and to the NSF for its support.
They also gratefully  thank the anonymous referees for their valuable remarks.
%
%
%
%
%
%
%
%
%
%
%
%
%
%%%%%%%%%%%%%%%%%%%%%%%%%%%%%%%%%%%%%%%%%%%%%%%%%%%%%%%%%%%%%%%%%%%%%%%%%%
%%%%%%%%%%%%%%%%%%%%%%%%%%%%%%%%%%%%%%%%%%%%%%%%%%%%%%%%%%%%%%%%%%%%%%%%%%
%%%%%%%%%%%%%%%%%    Section : Definitions and examples    %%%%%%%%%%%%%%%
%%%%%%%%%%%%%%%%%%%%%%%%%%%%%%%%%%%%%%%%%%%%%%%%%%%%%%%%%%%%%%%%%%%%%%%%%%
%%%%%%%%%%%%%%%%%%%%%%%%%%%%%%%%%%%%%%%%%%%%%%%%%%%%%%%%%%%%%%%%%%%%%%%%%%%
\section{Definitions and examples}\label{Section:Def}
This section contains all the definitions, as well as some useful preliminary facts and some examples.
%
%
%
%
%
%
%
%
%
%
%%%%%%%%%%%%%%%%%%%%%%%%%%%%%%%%%%%%%%%%%%%%%%%%%%%%%%%%%%%%%%%%%%%%%%%%%%
%%%%%%%%%%%%%%%%%%%%%%%%%%%%%%%%%%%%%%%%%%%%%%%%%%%%%%%%%%%%%%%%%%%%%%%%%%
%%%%%%%%%%%%%%%%%%%%    Subsection : Wreath products    %%%%%%%%%%%%%%%%%%
%%%%%%%%%%%%%%%%%%%%%%%%%%%%%%%%%%%%%%%%%%%%%%%%%%%%%%%%%%%%%%%%%%%%%%%%%%
%%%%%%%%%%%%%%%%%%%%%%%%%%%%%%%%%%%%%%%%%%%%%%%%%%%%%%%%%%%%%%%%%%%%%%%%%%%
\subsection{Wreath products}
Let $X$ be a set and $G$ a group.
We view $\bigoplus_XG$ as the set of functions from $X$ to $G$ with finite support:
\[
	\bigoplus_XG=\setst{\varphi\colon X\to G}{\varphi(x)=1 \textnormal{ for all but finitely many }x}.
\]
This is naturally a group, where multiplication is taken componentwise.

If $H$ is a group acting on $X$, then it naturally acts on $\bigoplus_XG$
by $(h.\varphi)(x)=\varphi(h^{-1}.x)$.
This leads to the following standard definition
\begin{defn}\label{Def:WreathProd}
Let $G$ and $H$ be groups and $X$ be a set on which $H$ acts.
The \emph{(restricted \footnote{There exists an unrestricted version of this product where the direct sum is replaced by a direct product.}) wreath product} $G\wr_XH$ is the group $(\bigoplus_XG)\rtimes H$.
\end{defn}
A prominent particular case of wreath products is of the form $G\wr_HH$, where $H$ acts on itself by left multiplication.
They are sometimes called \emph{standard wreath products} or simply \emph{wreath products}, while general $G\wr_XH$ are sometimes called \emph{permutational wreath products}.
Best known example of wreath product is the so called \emph{lamplighter group} $(\Z/2\Z)\wr_\Z\Z$.
Other (trivial) examples of wreath products are direct products $G\times H$ which correspond to wreath products over a singleton $G\wr_{\{*\}}H$.
%
%
%
%
%
%
%
%
%
%
%%%%%%%%%%%%%%%%%%%%%%%%%%%%%%%%%%%%%%%%%%%%%%%%%%%%%%%%%%%%%%%%%%%%%%%%%%
%%%%%%%%%%%%%%%%%%%%%%%%%%%%%%%%%%%%%%%%%%%%%%%%%%%%%%%%%%%%%%%%%%%%%%%%%%
%%%%%%%%%%%%%%    Subsection : Actions with bounded orbits    %%%%%%%%%%%%
%%%%%%%%%%%%%%%%%%%%%%%%%%%%%%%%%%%%%%%%%%%%%%%%%%%%%%%%%%%%%%%%%%%%%%%%%%
%%%%%%%%%%%%%%%%%%%%%%%%%%%%%%%%%%%%%%%%%%%%%%%%%%%%%%%%%%%%%%%%%%%%%%%%%%
\subsection{Classical actions with bounded orbits}
In this subsection we discuss some classical group properties, which are defined by actions with bounded orbits on various metric spaces.

\paragraph{Median graphs}
For $u$ and $v$ two vertices of a connected\footnote{We will always assume that our connected graphs are non-empty. This is coherent with the definition that a connected graph is a graph with exactly one connected component.} graph $\mathcal G$, we define the total interval $[u,v]$ as the set of vertices that lie on some shortest path between $u$ and $v$.
A connected graph $\mathcal G$ is \emph{median} if for any three vertices $u$, $v$, $w$, the intersection $[u,v]\cap[v,w]\cap[u,w]$ consists of a unique vertex, denoted $m(u,v,w)$.
A graph is \emph{median} if each of its connected components is median. For more on median graphs and spaces see~\cite{MR2405677,MR2671183,MR1705337}.
If $X$ and $Y$ are both (connected) median graphs, then their Cartesian product is also a (connected) median graph.
The class of median graphs was introduced by Nebesk\'y in 1971 \cite{Neb71} and Gerasimov \cite{Ger97, MR1663779},   Roller \cite{Rol98} and Chepoï \cite{Che00} realized independently that this class coincides with the class of $1$-skeleta of CAT(0) cube complexes.
Trees are the simplest examples of connected median graphs, while the ensuing classical example show that any power set can be endowed with a median graph structure.
\begin{exmp}\label{Ex:MainMedian}
Let $X$ be a set and let  $\powerset{X}=2^X$ be the set of all subsets of~$X$.
Define a graph structure on $\powerset{X}$ by putting an edge between $E$ and $F$ if and only if $\#(E\Delta F)=1$, where $\Delta$ is the symmetric difference.
Therefore, the distance between two subsets $E$ and $F$ is $\#(E\Delta F)$ and
the connected component of $E$ is the set of all subsets $F$ with $E\Delta F$ finite.
For $E$ and $F$ in the same connected component, $[E,F]$ consists of all subsets of $X$ that both contain $E\cap F$ and are contained in $E\cup F$.
In particular, $\powerset{X}$ is a median graph, with $m(D,E,F)$ being the set of all elements belonging to at least two of $D$, $E$ and~$F$. In other words, $m(D,E,F)=(D\cap E)\cup(D\cap F)\cup(E\cap F)$.
\end{exmp}
These graphs are useful due to the following fact.
Any action of a group $G$ on a set $X$ naturally extends to an action of $G$ on $\powerset{X}$ by graph homomorphisms: $g.Y\coloneqq\setst{g.y}{y\in Y}$ for $Y\subset X$.
Note that the action of $G$ on $\powerset{X}$ may exchange the connected components.
In fact, the connected component of $E\in \powerset{X}$ is stabilized by $G$ if and only if $E$ is \emph{commensurated} by~$G$, that is if for every $g\in G$ the set $E\Delta g.E$ is finite.
\paragraph{Uncountable cofinality}
Recall that a metric space $(X,d)$ is \emph{ultrametric} if $d$ satisfies the strong triangular inequality: $d(x,y)\leq\max\{d(x,z),d(z,y)\}$ for any $x$, $y$ and $z$ in $X$.
A group $G$ has \emph{uncountable cofinality} if every action on ultrametric spaces has bounded orbits.
The following characterization of groups of countable cofinality can be extracted from~\cite{MR2240370} and we include a proof only for the sake of completeness.
It implies in particular that a countable group has uncountable cofinality if and only if it is finitely generated.
\begin{lem}\label{Lemma:CofSub}
Let $G$ be a group. Then the following are equivalent:
\begin{enumerate}
\item $G$ can be written as a countable increasing union of proper subgroups,
\item $G$ does not have uncountable cofinality, i.e. there exists an ultrametric space $X$ on which $G$ acts with an unbounded orbit,
\item There exists a $G$-invariant (for the action by left multiplication) ultrametric $d$ on $G$ such that $G\curvearrowright G$ has an unbounded orbit.
\end{enumerate}
\end{lem}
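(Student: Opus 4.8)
The plan is to prove the cyclic chain $(1)\Rightarrow(3)\Rightarrow(2)\Rightarrow(1)$.

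For $(1)\Rightarrow(3)$, write $G=\bigcup_{n\in\N}G_n$ as an increasing union of proper subgroups; inserting the trivial subgroup at the start we may assume $G_0=\{1\}$. Define the depth $\ell(x)=\min\setst{n\in\N}{x\in G_n}$ and set $d(g,h)=\ell(g^{-1}h)$. Since each $G_n$ is a subgroup one has $\ell(x)=\ell(x^{-1})$, and, writing $m=\max\{\ell(x),\ell(y)\}$, both $x$ and $y$ lie in $G_m$, hence $xy\in G_m$, so $\ell(xy)\le\max\{\ell(x),\ell(y)\}$; moreover $\ell(x)=0$ iff $x=1$. This shows at once that $d$ is a left-invariant ultrametric on $G$. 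The orbit of $1$ under left multiplication is all of $G$, and $d(1,g)=\ell(g)$ is unbounded precisely because no $G_n$ equals $G$.

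The implication $(3)\Rightarrow(2)$ needs nothing: $G$ itself, equipped with such an ultrametric and the left-multiplication action, is an ultrametric $G$-space with an unbounded orbit. The substantial implication is $(2)\Rightarrow(1)$. Suppose $G$ acts by isometries on an ultrametric space $(X,d)$ with an unbounded orbit $G.x_0$, and for $n\in\N$ set $G_n=\setst{g\in G}{d(x_0,g.x_0)\le n}$. The key point is that each $G_n$ is a subgroup: using the strong triangle inequality with middle point $g.x_0$ and then that $g$ acts isometrically, $d(x_0,gh.x_0)\le\max\{d(x_0,g.x_0),d(g.x_0,gh.x_0)\}=\max\{d(x_0,g.x_0),d(x_0,h.x_0)\}$, which gives closure under products, while closure under inverses follows from $d(x_0,g^{-1}.x_0)=d(g.x_0,x_0)$. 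The sets $G_n$ are increasing and their union is $G$, since every $d(x_0,g.x_0)$ is a finite real number. Finally each $G_n$ is proper: unboundedness of $G.x_0$ means $\sup_{g\in G}d(x_0,g.x_0)=\infty$ (move the basepoint by an isometry), so for every $n$ there is some $g\notin G_n$. Thus $G$ is a countable increasing union of proper subgroups.

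The one delicate point, which I expect to be the crux, is the verification in $(2)\Rightarrow(1)$ that the bounded-displacement sets $G_n$ really are subgroups; this is exactly where ultrametricity is used, and there is no analogue of it for a general metric space (which is why the statement is specific to ultrametric spaces).
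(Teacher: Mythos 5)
Your proof is correct and follows essentially the same route as the paper's: the same bounded-displacement subgroups $G_n=\setst{g\in G}{d(x_0,g.x_0)\le n}$ for $(2)\Rightarrow(1)$, with ultrametricity used exactly where you identify it, and the same depth-function ultrametric $d(g,h)=\min\setst{n}{g^{-1}h\in G_n}$ for $(1)\Rightarrow(3)$. The only cosmetic difference is that the paper additionally extracts a strictly increasing subsequence of the $G_n$ at the end of $(2)\Rightarrow(1)$, a step that is immediate since the sequence cannot be eventually constant.
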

\begin{proof}
It is clear that the third item implies the second.

Let $(X,d)$ be an ultrametric space on which $G$ acts with an unbounded orbit~$G.x_0$. For any $n\in \N$ let $H_n$ be the subset of $G$ defined by
\[
	H_n\coloneqq\setst{g\in G}{d(x_0,g.x_0)\leq n}.
\]
Then $G$ is the union of the (countably many) $H_n$, which are subgroups of $G$.
Indeed, $H_n$ is trivially closed under taking the inverse, and is also closed under taking products as we have $d(x_0,gh.x_0)\leq\max\{d(x_0,g.x_0),d(g.x_0,gh.x_0)\}=\max\{d(x_0,g.x_0),d(x_0,h.x_0)\}$.
As $G.x_0$ is unbounded, they are proper subgroups.
Since they are proper subgroups and $H_n\leq H_{n+1}$, we can extract an increasing subsequence $(H_{r_n})_n$ that still satisfies $G=\bigcup_n H_{r_n}$.

Finally, suppose that $G=\bigcup_{n\in \N}H_n$, where the $H_n$ form an increasing sequence of proper subgroups.
It is always possible to suppose that $H_0=\{1\}$.
Define $d$ on $G$ by $d(g,h)\coloneqq\min\setst{n}{g^{-1}h\in H_n}$.
One easily verifies that $d$ is a $G$-invariant ultrametric. Moreover, the orbit of $1$ contains all of $G$ and is hence unbounded.
\end{proof}
%
%
%The above lemma gives us the following example, which we will use in Lemma~\ref{Lemma:Cof} to characterize which direct sums have uncountable cofinality.\PH{Ajouté la justification de l'exemple.}
%
%
%\begin{exmp}\label{Exmpl:Ultra}
%Let $(G_i)_{i\geq 1}$ be non-trivial groups and let $G\coloneqq\bigoplus_{i\geq 1} G_i$ be their direct sum.
%Then $d_\infty(f,g)\coloneqq\max\setst{i}{f(i)\neq g(i)}$ is an ultrametric on $G$ which is $G$-invariant for the action of $G$ on itself by left multiplication.
%\end{exmp}
%
%
%
\paragraph{Some classical group properties}
We now discuss the bounded orbits properties for actions on various classes of metric spaces.

%Before introducing the next definitions, let us emphasize the fact that for us, an action of a group
%
%
\begin{defn}\label{Def:FHFA}
Let $G$ be a group.
It is said to have the following properties:
\begin{itemize}
\item\emph{Bergman's property} if  any action by isometries on a metric space has bounded orbits,
\item\emph{Property~\FB} if any action by affine isometries on a reflexive real Banach space has bounded orbits,
\item \emph{Property~\FH} if any action by affine isometries on a real Hilbert space has bounded orbits,
\item
\emph{Property~\FW} if any action by graph isomorphisms on a connected median graph has bounded orbits,
\item
\emph{Property~\FR} if any action by isometries on a real tree has bounded orbits,
\item
\emph{Property~\FA} if any action by graph isomorphisms on a tree has bounded orbits,
\item
\emph{Uncountable cofinality} if any action by isometries on an ultrametric space has bounded orbits.
\end{itemize}
%\GS{Rendu cohérent la nomenclature, toujours : "any action by isometries"}
\end{defn}
In the above, we insisted on the fact that actions are supposed to preserve the structure of the metric space under consideration, or in other words to be by automorphisms of the considered category.
This is sometimes automatic, as for example any isometry of a real Banach or Hilbert space is affine by the Mazur-Ulam theorem.
However, for cube complexes, for example, this is not the case; a $2$-regular tree has automorphism group $\Z\rtimes(\Z/2\Z)$, while its isometry group is $\R\rtimes(\Z/2\Z)$.
In other words, an isometry of a cube complex is not necessarily a cube complex isomorphism.
See also Example~\ref{Exmp:BZ}.

In the following, we will often do a slight abuse of notation and simply speak of a \emph{group action} on a space $X$, without always specifying by which kinds of maps the group acts, which should always be clear from the context.

%
%
%\begin{rem}
The names \FB, \FH, \FW,  \FR{} and \FA{} come from the fact that these properties admit a description in terms of (and were fist studied in the context of) the existence of a fixed point for actions on reflexive Banach spaces, on Hilbert spaces, on spaces with walls (or equivalently on CAT(0) cube complexes, see \cite{MR2197811,MR2059193}), on real trees and on trees (\emph{Arbres} in french).
However this is equivalent with the bounded orbit property, see Proposition~\ref{Proposition:Mediane} and the discussion below~it. Observe that space with walls admit a natural pseudo-metric on them, which is not necessarily a metric. 
%\end{rem}

The Bergman's property can also be characterized via length function, see for example the beginning of~\cite{MR4119107}.

For a survey on property~\FB, see~\cite{MR3382026} and the references therein.

For countable groups (and more generally for $\sigma$-compact locally compact groups), property~\FH{} is equivalent, by the Delorme-Guichardet theorem, to the celebrated Kazhdan's property (T), but this is not true in general.
Indeed by~\cite{MR2239037} symmetric groups over infinite sets are uncountable discrete group with Bergman's property which, as we will see just below, implies property~\FH.
Such groups cannot have property (T) as, for discrete groups, it implies finite generation.

A classical result of the Bass-Serre theory of groups acting on trees~\cite{MR0476875}, is that a group $G$ has property~\FA{} if and only if it satisfies the following three conditions: $G$ has uncountable cofinality, $G$ has no quotient isomorphic to $\Z$ and $G$ is not a non-trivial amalgam.
In view of this characterization, Theorem~\ref{Thm:FACK} says that property~\FA{} almost behaves well under wreath products.
\begin{prop}\label{Prop:ImplProp}
There are the following implications between the properties of Definition~\ref{Def:FHFA}:
%\begin{equation}\tag{\dag}\label{EQ:Implications}
\[
%	\textnormal{Bergman's property}\implies\textnormal{\FB}\implies \textnormal{\FH}\implies \textnormal{\FW}\implies \textnormal{\FA}\implies\textnormal{uncountable cofinality}.
%\begin{tikzcd}[arrows=Rightarrow]
%\textnormal{\FH} \arrow[d]& \textnormal{\FB} \arrow[l]& \textnormal{Bergman's property}\arrow[l] \\%
%\textnormal{\FW} \arrow[r]& \textnormal{\FA} \arrow[r]& \textnormal{uncountable cofinality}\\
%&\textnormal{\FR}\arrow[u]&
%\end{tikzcd}
\begin{tikzcd}[arrows=Rightarrow]
\parbox{4.5em}{\textnormal{Bergman's property}}\arrow[r]	&\textnormal{\FB} \arrow[r]	&\textnormal{\FH} \arrow[r]	\arrow[d]&\textnormal{\FW}\arrow[d] &\\%
&&\textnormal{\FR}\arrow[r]& \textnormal{\FA} \arrow[r]& \parbox{5em}{\textnormal{uncountable cofinality}}
\end{tikzcd}
%\end{equation}
\]
Moreover, except maybe for the implication $[\textnormal{Bergman's property}\implies\textnormal{\FB}]$, all implications are strict.%\PH{Je ne sais pas montrer (si c'est vrai) que ``\FB{} n'implique pas Bergman'' !?!}
\end{prop}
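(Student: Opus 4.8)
The plan is to go through the six implications one by one, in each case converting an action with an unbounded orbit on a space of the ``smaller'' class into an action with an unbounded orbit on a space of the ``larger'' class; this yields the implications in contrapositive form. Afterwards I would assemble, implication by implication, a group witnessing strictness.

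The two leftmost implications are immediate from the definitions, since a reflexive real Banach space is in particular a metric space on which affine isometries act by isometries (Bergman's property $\Rightarrow$ \FB) and a real Hilbert space is in particular a reflexive real Banach space (\FB $\Rightarrow$ \FH). For \FH $\Rightarrow$ \FW I would use the wall (hyperplane) structure of a connected median graph $M$: its set $\mathcal W$ of walls is such that any two vertices $u,v$ are separated by exactly $d(u,v)$ walls, and an action of $G$ on $M$ by graph automorphisms permutes $\mathcal W$ and hence gives an affine isometric action on $\ell^2(\mathcal W)$ whose cocycle $b$ based at a vertex $o$ satisfies $\lVert b(g)\rVert^2 = d(o,g\cdot o)$; an unbounded orbit in $M$ thus produces an unbounded orbit in $\ell^2(\mathcal W)$, so $G$ fails \FH. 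For \FH $\Rightarrow$ \FR I would invoke the classical fact that on a real tree $(T,d)$ the metric $d$ is itself conditionally negative definite --- equivalently $(T,\sqrt d)$ embeds isometrically into a Hilbert space --- which is checked on finite metric subtrees and passes to direct limits; the associated GNS/Schoenberg construction then yields a Hilbert space $\mathcal H$ with an affine isometric $\Isom(T)$-action and a $G$-equivariant map $\iota\colon T\to\mathcal H$ with $\lVert\iota(p)-\iota(q)\rVert^2 = d(p,q)$, so an unbounded orbit in $T$ transfers. For \FR $\Rightarrow$ \FA it suffices to note that the geometric realisation of a simplicial tree (each edge isometric to $[0,1]$) is a real tree on which graph automorphisms act by isometries, and the two metrics coincide on the vertex set, so an unbounded orbit of vertices is unbounded in either picture. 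Finally, for \FA $\Rightarrow$ uncountable cofinality I would argue by contraposition through Lemma~\ref{Lemma:CofSub}: writing $G=\bigcup_n H_n$ as a countable increasing union of proper subgroups with $H_0=\{1\}$, one forms the ``coset tree'' with vertex set $\bigsqcup_{n\ge0}G/H_n$ and an edge joining $gH_n$ to $gH_{n+1}$ for every $g\in G$ and $n\ge0$; this graph is connected because $\bigcup_nH_n=G$ and has no cycles because every vertex has a unique neighbour at the next level, so it is a tree, $G$ acts on it by left translation, and $d(H_0,gH_0)=2\min\{n:g\in H_n\}$ is unbounded on the orbit of the root $H_0$ since the $H_n$ are proper.

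For the strictness statements I would produce, for each of the five implications other than Bergman's property $\Rightarrow$ \FB, a group having the stronger but not the weaker property. For \FB $\Rightarrow$ \FH a convenient witness is any infinite hyperbolic group with Kazhdan's property~(T), for instance a cocompact lattice in $\mathrm{Sp}(n,1)$: property~(T) forces \FH, while by Yu's theorem the group acts properly --- hence with unbounded orbits --- by affine isometries on the reflexive Banach space $\ell^p$ for $p$ large, so it fails \FB. For the remaining four I would quote the corresponding known examples: a group with \FW but not \FH, one with \FR but not \FH, one with \FA but not \FR, and a group of uncountable cofinality without \FA (the last two being classical via Bass--Serre theory). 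The substantive part of the proof is the pair of ``embedding into a Hilbert space'' steps \FH $\Rightarrow$ \FW and \FH $\Rightarrow$ \FR, which lean on standard but non-trivial facts about wall spaces and conditionally negative definite kernels; the other implications are formal, and the main remaining effort is locating the strictness witnesses, the least routine of which is a group with \FW but not \FH.
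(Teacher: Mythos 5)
Your treatment of the implications is essentially sound, and in two places it takes a genuinely different (but standard and correct) route from the paper: for \FH$\implies$\FW{} you use the wall structure of a connected median graph and the associated cocycle into $\ell^2$ of the walls, where the paper instead cites Cornulier's characterization of \FW{} via affine Hilbert actions preserving integral points; for \FH$\implies$\FR{} you use conditional negative definiteness of the real-tree metric and the GNS construction, where the paper goes through median metric spaces and $L^1$-embeddings. Your witness for the strictness of $\FB\implies\FH$ (an infinite hyperbolic group with property (T), which fails \FB{} because of Yu's proper affine actions on the reflexive spaces $\ell^p$ for large $p$) is likewise a valid alternative to the paper's argument that countable groups with \FB{} are finite.

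There is, however, a genuine omission: the diagram contains seven arrows, not six, and you never address $\FW\implies\FA$. The implication itself is trivial (a tree is a connected median graph, and graph automorphisms preserve that structure), but its strictness requires exhibiting a group with \FA{} but without \FW, and none of the four witnesses you list supplies it: in particular a group with \FA{} but not \FR{} need not fail \FW, since whether \FW{} implies \FR{} is open, so you cannot route through \FR. This is precisely the witness that costs the paper the most effort: it takes $G\wr_HH$ with $G$ finite non-trivial and $H$ infinite with \FA, which has \FA{} by Theorem~\ref{Thm:FAFiniteOrbits} but fails \FW{} by Theorem~\ref{Thm:Main}; alternatively, the triangle groups $\Delta(l,m,n)$ with $l,m,n$ finite and $\frac1l+\frac1m+\frac1n\leq1$ have \FA{} by Serre yet act on a space with walls without fixed points. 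You need to add both the (easy) implication and this (non-routine) strictness witness.
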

\begin{proof}
The implications $[\textnormal{Bergman's property}\implies \textnormal{\FB}\implies\textnormal{\FH}]$ and $[\textnormal{\FW}\implies \textnormal{\FA}\Longleftarrow\textnormal{\FR}]$ trivially follow from the fact that Hilbert spaces are reflexive Banach spaces, which are themselves metric spaces and that trees are both real trees and connected median graphs.
The implication $[\textnormal{\FH}\implies \textnormal{\FW}]$ follows from the fact that a group $G$ has property~\FW{} if and only if any affine action on a real Hilbert space which preserves integral points has bounded orbits~\cite[Proposition 7.I.3]{Cornulier2013}.
The implication $[\textnormal{\FH}\implies \textnormal{\FR}]$ follows from the fact that real trees are median metric spaces, and that such spaces can be embedded into $L^1$-spaces (see for instance \cite[Theorem V.2.4]{Ver93}).
Finally, the implication $[\textnormal{\FA}\implies\textnormal{uncountable cofinality}]$ is due to Serre~\cite{MR0476875}: if $G$ is an increasing union of subgroups $G_i$, then $\bigsqcup G/G_i$ admits a tree structure by joining any $gG_i\in G/G_i$ to $gG_{i+1}\in G/G_{i+1}$.
The action of $G$ by multiplication on $\bigsqcup G/G_i$ is by graph isomorphisms and with unbounded orbits.

We now present some examples demonstrating the strictness of the implications. % of \eqref{EQ:Implications}.
Countable groups with property~\FB{} are finite by~\cite{MR2137870}, while infinite finitely generated groups with property (T), e.g. $\SL_3(\Z)$, have property~\FH.
The group $\SL_2(\Z[\sqrt{2}])$ has property~\FW{} but not \FH, see~\cite{MR3299841}.
If $G$ is a non-trivial finite group and $H$ is an infinite group with property~\FA{} (respectively property~\FR), then $G\wr_HH$ has property~\FA{} (respectively property~\FR) by Theorem~\ref{Thm:FAFiniteOrbits}, but does not have property~\FW{} by Theorem~\ref{Thm:Main}.
The group $\Z$ has uncountable cofinality, while it acts by translations and with unbounded orbits on the infinite $2$-regular tree.
Finally, Minasyan constructed examples of groups with \FA{} but without \FR{} in~\cite{MR3465847}.
\end{proof}
The reader familiar with triangle groups
\[
	\Delta(l,m,n)=\langle a,b,c\,|\,a^2=b^2=c^2=(ab)^l=(bc)^m=(ca)^n=1\rangle\]
with $l,m,n\in\{1,2,\dots\}\cup\{\infty\}$ will be happy to observe that they provide explicit examples of groups with property~\FA{} but not property~\FW.
Indeed, if $l$, $m$ and $n$ are all three integers, then $\Delta(l,m,n)$ has property~\FA{} by Serre~\cite[Section 6.5, Corollaire 2]{MR0476875}.
And if $\kappa(l,m,n)\coloneqq\frac1l+\frac1m+\frac1n\leq 1$, then $\Delta(l,m,n)$ is the infinite symmetric group of a tilling of the Euclidean plane (if $\kappa(l,m,n)=1$) or of the hyperbolic plane (if $\kappa(l,m,n)<1$) and hence acts on a space with walls without fixed point, which implies that it does not have property FW.

%At this point it is interesting to ask about the exact relation between property~\FR{} and the other properties.
%On one hand, we have the obvious implication $[\textnormal{Bergman's property}\implies \textnormal{\FR}]$.
%On the other hand, property~\FR{} does not imply property~\FW.
%Indeed, if $G$ is a non-trivial finite group and $H$ is an infinite group with property~\FR, then $G\wr_HH$ has property~\FR{} by Theorem~\ref{Thm:FAFiniteOrbits}, but does not have property~\FW{} by Theorem~\ref{Thm:Main}.
%It hence remains to understand if property~\FR{} is implied by any one of properties \FB, \FH{} or \FW.
In view of Proposition~\ref{Prop:ImplProp}, two questions remain open: is the implication $[\textnormal{Bergman's property}\implies\textnormal{\FB}]$ strict, and does property~\FW{} implies property~\FR?

It is possible to consider relative versions of the properties appearing in Definition~\ref{Def:FHFA}.
Let \CatS{} be any classes of metric spaces considered in Definition~\ref{Def:FHFA} and let \BS{} be the corresponding group property.
If $G$ is a group and $H$ a subgroup of~$G$, we say that the pair $(G,H)$ has \emph{relative property~\BS} if for every $G$-action on an \CatS-space, the $H$ orbits are bounded.
A group $G$ has property~\BS{} if and only if for every subgroup $H$ the pair $(G,H)$ has relative property~\BS{}, and if and only if for every overgroup $L$ the pair $(L,G)$ has relative property~\BS.
%
%
%
%
%
%
%
%
%
%%%%%%%%%%%%%%%%%%%%%%%%%%%%%%%%%%%%%%%%%%%%%%%%%%%%%%%%%%%%%%%%%%%%%%%%%%
%%%%%%%%%%%%%%%%%%%%%%%%%%%%%%%%%%%%%%%%%%%%%%%%%%%%%%%%%%%%%%%%%%%%%%%%%%
%%%%%%%%%%%    Groups acting with bounded orbits on S-spaces    %%%%%%%%%%
%%%%%%%%%%%%%%%%%%%%%%%%%%%%%%%%%%%%%%%%%%%%%%%%%%%%%%%%%%%%%%%%%%%%%%%%%%
%%%%%%%%%%%%%%%%%%%%%%%%%%%%%%%%%%%%%%%%%%%%%%%%%%%%%%%%%%%%%%%%%%%%%%%%%%%
\subsection{Groups acting with bounded orbits on \CatS-spaces}\label{SubSec:CatS}
It is possible to define other properties in the spirit of Definition~\ref{Def:FHFA} for any ``subclass of metric spaces'', or more precisely for any subcategory of pseudo-metric spaces.
%In order to give a uniform treatment of all of them, we will use the notion of pseudo-metric spaces and the language of category theory.
A reader not familiar with category theory and interested only in one specific subclass of metric spaces may forget all these general considerations and only verify that the arguments of Section~\ref{Section:Proof} apply for their favourite subclass of metric spaces.
\begin{defn}\label{Def:Distance}
A \emph{pseudo-metric space} is a set $X$ with a map $d\colon X\times X\to \R_{\geq0}$, called a \emph{pseudo-distance}, such that
\begin{enumerate}
\item $d(x,x)=0$ for all $x\in X$,
\item $d(x,y)=d(y,x)$ for all $x,y \in X$,
\item $d(x,z)\leq d(x,y)+d(y,z)$.
\end{enumerate}
\end{defn}
If moreover $d(x,y)\neq 0$ for $x\neq y$, the map $d$ is a \emph{distance} and $(X,d)$ is a \emph{metric space}.
On the other hand, an \emph{ultra-pseudo-metric space} is a pseudo-metric space $(X,d)$ such that $d$ satisfies the strong triangular inequality: $d(x,z)\leq \max\{d(x,y),d(y,z)\}$.
A \emph{morphism} (or \emph{short map}) between two pseudo-metric spaces $(X_1,d_1)$ and $(X_2,d_2)$ is a distance non-increasing map $f\colon X_1\to X_2$, that is $d_2(f(x),f(y))\leq d_1(x,y)$ for any $x$ and $y$ in $X_1$.
If $f$ is bijective and distance preserving, then it is an \emph{isomorphism} (or \emph{isometry}).
Pseudo-metric spaces with short maps form a category \PMet, of which the category of metric spaces (with short maps) $\category{Met}$ is a full subcategory.

If $(X,d)$ is a pseudo-metric space, we have a natural notion of the  \emph{diameter} of a subset $Y\subset X$ with values in $[0,\infty]$, defined by $\diam(Y)\coloneqq\sup\setst{d(x,y)}{x,y\in Y}$, and we say that $Y$ is \emph{bounded} if it has finite diameter.
%
%

%\begin{defn}\label{Def:Categoric}
Remind that a \emph{subcategory of \PMet}, is a category \CatS{} whose objects are pseudo-metric spaces, and whose morphisms are short maps.
The subcategory \CatS{} is \emph{full} if given two \CatS-objects $X$ and $Y$, any short map from $X$ to $Y$ is a \CatS-morphism.
A \emph{$G$-action on an \CatS-space} $X$ is simply a group homomorphism $\alpha\colon G\to\Aut_{\CatS}(X)$.
%It has \emph{bounded orbits} if $F_{\category S}\circ \alpha\colon G\to \Aut_{\category{QPMet}}(X)$ has bounded orbits.
%\end{defn}
%
%
%In practice, we will often simply write \CatS{}  for the pair $(\CatS,F_{\CatS})$.
%Since \PMet{} itself is concrete, that is we have a faithful functor $F\colon \category{QPMet}\to\category{Set}$, the category \CatS{} is also concrete (via $F\circ F_S$) and its objects can be thoughts as sets with ``extra structure''.

In practice, a lot of examples of (full) subcategories of \PMet{} are already subcategories of $\category{Met}$.
Obvious examples of full subcategories of $\category{Met}$ include metric spaces and ultrametric spaces (with short maps).
Affine real Hilbert and Banach spaces and more generally normed vector spaces (with affine maps) are also full subcategories of $\category{Met}$ if we restrict ourselves to morphisms that do not increase the norm (that is such that  $\|f(x)\|\leq\| x\|$).
In particular, for us isomorphisms of Hilbert and Banach spaces will always be affine isometries.

For connected graphs (and hence for its full subcategories of connected median graphs and of trees), one looks at the category $\category{Graph}$ where objects are connected simple graphs $G=(V,E)$ and where a morphism $f\colon (V,E)\to(V',E')$ is a function between the vertex sets such that if $(x,y)$ is an edge then either $f(x)=f(y)$ or $(f(x),f(y))$ is an edge.
There are two natural ways to see $\category{Graph}$ as a subcategory of $\category{Met}$.
The first one consists to look at the vertex set $V$ endowed with the shortest-path metric: $d(v,w)$ is the minimal number of edges on a path between $v$ and $w$.
The second one, consists at looking at the so called geometric realization of $(V,E)$, where each edge is seen as an isometric copy of the segment $[0,1]$.
%Neither of this embedding $\category{Graph}\hookrightarrow\category{Met}$ is full.
Similarly to what happens for cube complexes (see the discussion after Definition~\ref{Def:FHFA}), the geometric realization of a graph gives an embedding $\category{Graph}\hookrightarrow\category{Met}$ which is not full.
%Observe that the geometric realization of a graph gives an embedding $\category{Graph}\hookrightarrow\category{Met}$ which is not full.
%%Indeed, on one hand if $P_n$ is the graph path of length $2$, then there is exactly four surjective graph homomorphisms from $P_2$ to $P_1$ (choosing where goes the first extremity, and which edge to collapse), but six surjective short maps from the vertex set of $P_2$ to the vertex set of $P_1$ (the two extremities are allowed to have the same image).
%Indeed, the $2$-regular tree $T$ has automorphism group $\Aut_{\category{Graph}}(T)=\Z\rtimes(\Z/2\Z)$ (translations and a symmetry), while its geometric realization is $\R$ with automorphism group $\Aut_{\category{Met}}(\R)=\R\rtimes(\Z/2\Z)$.
Nevertheless, for our purpose, the particular choice of one of the above two embeddings $\category{Graph}\hookrightarrow\category{Met}$ will make no difference.

We can now formally define the group property~\BS{} as:
\begin{defn}\label{Def:PropBS}
Let \CatS{} be a subcategory of \PMet.
A group $G$ has \emph{property} \BS{} if every $G$-action by \CatS-automorphisms on an \CatS-space has all its orbits bounded.
A pair $(G,H)$ of a group and a subgroup has \emph{relative property} \BS{} if for every $G$-action by \CatS-automorphisms on an \CatS-space, the $H$-orbits are bounded.
\end{defn}
Observe that a group $G$ has property~\BS{} if and only if any $G$-action on an \CatS-space has at least one bounded orbit.
%One implication is trivial and the other is a direct application of the triangle inequality.

All the properties of Definition~\ref{Def:FHFA} are of the form \BS.
%Examples of other interesting subcategories include (some specific subclass of) Banach spaces as well as real or complex hyperbolic spaces, see~\cite[Definition 6.22]{MR1023471}.
Another example of property of the form \BS{} can be found in~\cite[Definition 6.22]{MR1023471}: a group has property (FHyp$_{\field{C}}$) if any action on a real or complex hyperbolic space of finite dimension has bounded orbits.
This property is implied by property~\FH, but does not imply property~\FA{}~\cite[Corollary 6.23 and Example 6.24]{MR1023471}.
One can also want to look at the category of all Banach spaces (the corresponding property~BB hence stands between the Bergman's property and property~\FB), or the category of $L^p$-spaces for $p$ fixed~\cite{MR2978328} (if $p\notin\{1,\infty\}$, then B$L^p$ is implied by \FB).

Another interesting example of a property of the form \BS{} is the fact to have no quotient isomorphic to $\Z$, see Example~\ref{Exmp:BZ}.
The main interest for us of this example is that property~\FA{} is the conjonction of three properties, two of them (uncountable cofinality and having no quotient isomorphic to $\Z$) still being of the form \BS.
\begin{exmp}\label{Exmp:BZ}
%Let $Z^n$ be the Cayley graph of $\Z^n$ for the standard generating set.
%Then $\Aut_{\category{Graph}}(Z^n)=\Z^n\rtimes D_{2n}$ projects onto $\Z$. Hence, we obtain that a group $G$ has no quotient isomorphic to $\Z$ if and only if every $G$-action on $Z$ has bounded orbits, if and only if every $G$-action on $Z^n$ has bounded orbits. Let us denote by (B$\Z$) this property.
Let $Z$ be the $2$-regular tree, or in other words the Cayley graph of $\Z$ for the standard generating set.
Then $\Aut_{\mathbf{Graph}}(Z)=\Z\rtimes (\Z/2\Z)$ is the infinite dihedral group and its subgroup of orientation preserving isomorphisms is isomorphic to $\Z$.
Let \textbf{S} be the category with one object $Z$ and with morphisms the orientation preserving isomorphisms.
Hence, we obtain that a group $G$ has no quotient isomorphic to $\Z$ if and only if every $G$-action on \textbf{S}-space has bounded orbits.
Let us denote by \BZ{} this property.

Since $Z$ is a tree, property~\FA{} implies property~\BZ. This implication is strict as demonstrated by $\field{Q}$.
In fact, the counterexample $\field{Q}$ shows that \BZ{} does not imply uncountable cofinality, while $\Z$ demonstrate that uncountable cofinality does not implies \BZ.
\end{exmp}
An example of an uninteresting property~\BS{} is given by taking \CatS{} to be the category of metric spaces of finite diameter (together with short maps), in which case any group has \BS.
On the other hand, if \CatS{} is the category of extended pseudo-metric spaces ($d$ takes values in $\R_+\cup\{\infty\})$, only the trivial group has \BS.
Indeed, one can put the extended metric $d(x,y)=\infty$ if $x\neq y$ on $G$ and then the action by left multiplication of $G$ on $(G,d)$ is transitive and with an unbounded orbit as soon as $G$ is non trivial.

The category \PMet{} has the advantage (over $\category{Met}$) of behaving more nicely with respect to categorical constructions and quotients.
However, we have the following lemma.
\begin{lem}\label{Lemma:BergQPMet}
A group $G$ has Bergman's property~(respectively uncountable cofinality) if and only if any isometric $G$-action on a pseudo-metric (respectively ultra-pseudo-metric) space has bounded orbits.
\end{lem}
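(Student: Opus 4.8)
The plan is to use the standard \emph{metric identification} (Kolmogorov quotient) of a pseudo-metric space and to check that it is compatible with isometric group actions. One implication is immediate: every metric space is a pseudo-metric space and every ultrametric space is an ultra-pseudo-metric space, so if all isometric $G$-actions on pseudo-metric (respectively ultra-pseudo-metric) spaces have bounded orbits, then in particular all isometric $G$-actions on metric (respectively ultrametric) spaces do, which is exactly Bergman's property (respectively uncountable cofinality).

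For the converse, I would start from a pseudo-metric space $(X,d)$ on which $G$ acts by isometries and define a relation on $X$ by $x\sim y$ if and only if $d(x,y)=0$; the three axioms of a pseudo-distance show that $\sim$ is an equivalence relation. Set $\bar X=X/\!\sim$ and $\bar d([x],[y])\coloneqq d(x,y)$. The triangle inequality gives that $d(x,y)$ depends only on the classes $[x]$ and $[y]$, so $\bar d$ is well defined, and one checks at once that $\bar d$ is a genuine distance: it inherits symmetry and the triangle inequality from $d$, and $\bar d([x],[y])=0$ forces $x\sim y$, i.e.\ $[x]=[y]$. Since each $g\in G$ acts by an isometry, $d(g.x,g.y)=d(x,y)$, so $g$ preserves $\sim$ and therefore descends to a bijection of $\bar X$; this bijection is an isometry of $(\bar X,\bar d)$, and these assemble into a $G$-action on $\bar X$ by isometries.

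It then remains to relate the orbits: for $x\in X$ and all $g,g'\in G$ one has $\bar d([g.x],[g'.x])=d(g.x,g'.x)$, hence $\diam(G.[x])=\diam(G.x)$ in $[0,\infty]$. Thus if $G$ has Bergman's property the orbit $G.[x]$ is bounded in $(\bar X,\bar d)$, and therefore $G.x$ is bounded in $(X,d)$; as $X$ was arbitrary, every isometric $G$-action on a pseudo-metric space has bounded orbits. For the ultrametric statement I would only add that if $d$ satisfies the strong triangle inequality then so does $\bar d$, since $\bar d([x],[z])=d(x,z)\leq\max\{d(x,y),d(y,z)\}=\max\{\bar d([x],[y]),\bar d([y],[z])\}$; hence $(\bar X,\bar d)$ is ultrametric and the same argument applies with uncountable cofinality in place of Bergman's property. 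There is no genuine obstacle here; the only points requiring (routine) care are that the $G$-action really descends to the quotient and that the metric identification neither creates nor destroys bounded orbits.
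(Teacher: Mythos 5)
Your proposal is correct and follows exactly the paper's own argument: one direction is the trivial inclusion of (ultra)metric spaces into (ultra-)pseudo-metric spaces, and the other passes to the Kolmogorov quotient $X/\!\sim$, checks that the $G$-action descends isometrically, that orbit diameters are preserved, and that the strong triangle inequality survives the quotient. No differences worth noting.
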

\begin{proof}
One direction is trivial.
For the other direction, let $(X,d)$ be a pseudo-metric space on which $G$ acts by isometries.
Let $X'\coloneqq X/\sim$ be the quotient of $X$ for the relation $x\sim y$ if $d(x,y)=0$ and let $d'$ be the quotient of $d$.
Then $(X',d')$ is a metric space, the action of $G$ passes to the quotient and $G.x$ is $d$-bounded if and only if $G.[x]$ is $d'$-bounded.
Finally, if $d$ satisfies the strong triangular inequality, then so does~$d'$.
\end{proof}
On the other hand, the following result is perhaps more surprising.
\begin{lem}
A group $G$ has Bergman's property if and only if any $G$-action by graph automorphisms on a connected graph has bounded orbits.
\end{lem}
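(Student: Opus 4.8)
The plan has two halves. The forward implication is immediate: a connected graph equipped with its shortest-path metric is a metric space and every graph automorphism is an isometry of it, so if $G$ has Bergman's property then every action by graph automorphisms on a connected graph has bounded orbits.

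For the converse, assume every $G$-action by graph automorphisms on a connected graph has bounded orbits, let $G$ act by isometries on a metric space $(X,d)$, fix $x_0\in X$, and aim to bound the orbit $G.x_0$. The naive attempt — fixing a scale $n\ge 1$ and forming the graph $\mathcal G_n$ on $X$ with an edge between $x$ and $y$ whenever $0<d(x,y)\le n$ — does not work directly: $\mathcal G_n$ carries the $G$-action by graph automorphisms, but it need not be connected. The fix I would use is to stack all scales into a single graph $\Gamma$ with vertex set $X\times\{1,2,3,\dots\}$, keeping on each slice $X\times\{n\}$ the edges of $\mathcal G_n$ and adding a vertical edge between $(x,n)$ and $(x,n+1)$ for all $x$ and $n$. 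Since $G$ acts on $X$ by isometries, $g.(x,n)=(g.x,n)$ is an action on $\Gamma$ by graph automorphisms. The first routine check is that $\Gamma$ is connected: from any $(x,m)$ one climbs vertically to a level $N\ge\max\{m,\lceil d(x_0,x)\rceil,1\}$, crosses the (superfluous when $x=x_0$) horizontal edge to $(x_0,N)$, and descends to $(x_0,1)$. So the hypothesis applies to $\Gamma$, and in particular the orbit of $(x_0,1)$, which is $(G.x_0)\times\{1\}$, has some finite diameter $D$ in the shortest-path metric of $\Gamma$.

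The remaining step, and the only one requiring care, is to convert this graph bound into a $d$-bound on $G.x_0$. Given $y,y'\in G.x_0$, take a $\Gamma$-path of length at most $D$ from $(y,1)$ to $(y',1)$. Since the level coordinate changes by at most $1$ per edge and starts and ends at $1$, such a path makes equally many upward and downward vertical steps, hence never rises above level $1+D/2$; and a horizontal edge at level $n$ moves the $X$-coordinate by at most $n$. With at most $D$ edges in all, this yields $d(y,y')\le D(1+D/2)$, so $G.x_0$ is bounded; as $x_0$, $(X,d)$ and the action were arbitrary, $G$ has Bergman's property. The main difficulty is conceptual rather than computational — recognizing that linking the scales by vertical edges is exactly what lets a single orbit feel all of them at once — plus the bookkeeping on how high a short path in $\Gamma$ can climb. (Alternatively, one could first deduce from the hypothesis, using Serre's connected graph $\bigsqcup_i G/G_i$ from the proof of Proposition~\ref{Prop:ImplProp}, that $G$ has uncountable cofinality, then observe that this forces the connected component of $x_0$ in some $\mathcal G_n$ to be $G$-invariant and apply the hypothesis to that component; the one-graph argument avoids this detour.)
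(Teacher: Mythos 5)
Your proof is correct, but it takes a different route from the paper's. For the converse, the paper builds a single auxiliary graph $G(X)$ on the vertex set $X$ by inserting, for every pair $x,y$, a new path of length $\lfloor d(x,y)\rfloor+1$; the inclusion $X\hookrightarrow G(X)$ is then a quasi-isometric embedding, the action extends canonically, and boundedness of orbits transfers back immediately (and linearly) along the quasi-isometry. You instead stack the threshold graphs $\mathcal G_n$ into one connected graph on $X\times\{1,2,\dots\}$ with vertical linking edges, and then recover a metric bound by controlling how high a path of length $D$ can climb, which gives the quadratic bound $D(1+D/2)$. Both arguments are sound: yours avoids introducing subdivision vertices and makes the multi-scale idea explicit, while the paper's construction keeps the comparison of metrics essentially trivial because the graph distance approximates $d$ up to an additive constant. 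Your checks (connectedness of $\Gamma$, equivariance, the level-coordinate bookkeeping, and the final triangle-inequality estimate) are all valid, and your parenthetical alternative via uncountable cofinality matches the remark the paper makes after its proof about deducing the lemma from Cornulier's characterization of Bergman's property.
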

\begin{proof}
The left-to-right implication is clear.

%For the other direction, we will use the following characterization of Bergman's property due to Cornulier~\cite{MR2240370}: a group $G$ has Bergman's property if and only if it has uncountable cofinality and for every generating set $T$ of $G$ the Calyey graph $\Cayley(G;T)$ is bounded.
%
%Suppose that any $G$-action on a connected graph has bounded orbits.
%Then $G$ has property~\FW{} and hence has uncountable cofinality.
%On the other hand, for every generating set $T$, the group $G$ acts transitively on $\Cayley(G;T)$, which implies that the latter is bounded.
%By the above characterization, we obtain that $G$ has Bergman's property.
For the other direction, let $X$ be a metric space. Let $G(X)$ denotes the graph obtained from a vertex-set $X$ by applying the following process: for any two $x, y\in X$ add a path of length $\lfloor d(x, y)\rfloor + 1$ between $x$ and $y$. $G(X)$ is connected and the obvious inclusion $\iota\colon X\to G(X)$ is a quasi-isometric embedding. Moreover, the construction is canonical, so every group action on $X$ extends to a group action on $G(X)$, making $\iota$ equivariant. So if a group satisfying the bounded orbit property on connected graphs acts on a metric space $X$, then its induced action on $G(X)$ has bounded orbits, which implies that its orbits in $X$ are bounded.
\end{proof}
Observe that an alternative proof of the above Lemma can be easily deduced from the following characterization of Bergman's property due to Cornulier~\cite{MR2240370}: a group $G$ has Bergman's property if and only if it has uncountable cofinality and for every generating set $T$ of $G$, the Calyey graph $\Cayley(G;T)$ is bounded.
Details are left to the reader.

While we will be able to obtain some results for a general subcategory of \PMet, we will sometimes need to restrict ourselves to subcategories  satisfying good properties. In particular, we will use three axioms: one on the existence of non-trivial $G$-action, one on the existence of finite Cartesian powers and one on infinite Cartesian powers.
Our Cartesian powers will need to be in some sense compatible with the bornology, but the conditions for finite and infinite powers will not be the same.
A summary of which axioms are satisfied by the above mentioned subcategories of \PMet{} can be found in page~\pageref{Table1}.
\begin{defn}[Axiom (A1)]\label{Def:NonTrivialAction}
A subcategory \CatS{} of \PMet{} has \emph{non-trivial group actions} if for every non-trivial group $G$ there exists an \CatS-space $X$ and an action $G\curvearrowright X$ by \CatS-automorphisms moving at least one point.
\end{defn}
Examples of categories \CatS{} with non-trivial group actions include (ultra-) metric spaces and metric spaces of finite diameter (with the action by multiplication of $G$ on itself, endowed with the discrete metric), (reflexive) Banach spaces and Hilbert spaces (with $X=l^2(G)$ and the permutation action of $G$), $L^p$ spaces ($G$ acting by permutation on $\ell^p(G)$) and finally connected median graphs and (real) trees ($X$ has one central vertex to which we attach an edge for every $g\in G$ and the action of $G$ is by left multiplication).
On the opposite side, both $Z$ from Example~\ref{Exmp:BZ} and real and complex finite dimensional hyperbolic spaces do not have non-trivial group actions. Indeed, a group acts non-trivially on $Z$ if and only if it projects onto $\Z$.
For hyperbolic spaces, a group $G$ acts on a hyperbolic space of dimension $n$ if and only if it projects onto a subgroup of  $\SO(n,1)$ or $\SU(n,1)$. In particular, if the action is non-trivial, then $G$ projects onto a non-trivial subgroup of $\GL_n(\C)$, whose all finitely generated subgroups are residually finite.
We conclude that a finitely generated infinite simple group $G$ does not admit a non-trivial action on a real or complexe hyperbolic space of finite dimension.

Before introducing the axioms about Cartesian powers, let us recall the definition and some properties of the product distances $d_p$.
\begin{defn}
For a real $p\geq 1$ and a collection of non-empty metric spaces $(X_i,d_i)_{i\in I}$, we have the maps
\begin{align*}
d_p\colon \prod_{i\in I}X_i\times \prod_{i\in I}X_i&\to \R\\
\bigl(f,g)&\mapsto\Bigl(\sum_{i\in I}d_i(f(i),g(i))^p\Bigr)^{\frac1p}
\end{align*}
and
\begin{align*}
d_\infty\colon \prod_{i\in I}X_i\times \prod_{i\in I}X_i&\to \R\\
\bigl(f,g)&\mapsto\sup_{i\in I}d_i(f(i),g(i)),
\end{align*}
%\GS{rendu cohérent, tu préfères $(f,g)$ ou $((x_i),(y_i))$ ?}
with the convention that a sum with uncountably non-zero summands is infinite.
\end{defn}
If $I$ is finite, then $d_p$ is a distance on $\prod_{i\in I}X_i$. Moreover, it is compatible with the bornology in the sense that if $E\subseteq X$ is unbounded, then the diagonal $\diag(E)\subset X^n$ is also unbounded.
The following definition generalizes this comportement to other distances.
\begin{defn}[Axiom (A2)]\label{Def:Cartesian}
A subcategory \CatS{} of \PMet{} satisfies \emph{axiom (A2)} if for any \CatS-space $X$ and any integer $n$, there exists an \CatS-object, called a \emph{$n$\textsuperscript{th} Cartesian power of $X$} and written $X^n$, such that:
\begin{enumerate}
\item
As a set, $X^n$ is the $n$\textsuperscript{th} Cartesian power of $X$,
\item\label{Item:Product}
The canonical image of $\Aut_{\CatS}(X)^n\rtimes \Sym(n)$ in $\Bij(X^n)$ lies in $\Aut_{\CatS}(X^n)$,
\item\label{Condidef:2}
If $E\subset X$ is unbounded, then the diagonal $\diag(E)\subset X^n$ is unbounded.
\end{enumerate}
\end{defn}
A good heuristic is that your favourite subcategory of \PMet{} would satisfies axiom (A2) in the above sense if and only if it already has a classical operation which is called \emph{Cartesian product}.
An \CatS-object satisfying the first two properties of Definition~\ref{Def:Cartesian} will be called a finite Cartesian power.

For metric spaces (of finite diameter), the categorical product (corresponding to the metric $d_\infty=\max\{d_X,d_Y\}$) works fine, but any product metric of the form $d_p=(d_X^p+d_Y^p)^{\frac1p}$ for $p\in[1,\infty]$ works as well.
For ultra-metric spaces, the categorical product $d_\infty$ works fine.
For $L^p$ spaces (and hence for Hilbert spaces) we take the usual Cartesian product (which is also the categorial product), which corresponds to the metric $d_p$.
For (reflexive) Banach spaces, any product metric of the form $d_p$ works.
For connected graphs, the usual Cartesian product, which corresponds to $d_1=d_X+d_Y$ works well, but one can also take the strong product (which is the categorial product in \category{Graph}), that is the distance $d_\infty$.
For connected median graphs, only the usual Cartesian product with $d_1$ works.\footnote{The category of median graphs does not have categorial products.}
On the other hand, (real) trees, $Z$ from Example~\ref{Exmp:BZ} and hyperbolic spaces do not have finite Cartesian powers and hence cannot satisfy axiom (A2).

As the above examples illustrate, there can be multiple non-isomorphic spaces playing the role of $X^n$. As our results will not depend on a particular choice of a Cartesian power, we will sometimes make a slight abuse of language and speak of \emph{the} Cartesian power $X^n$.

The situation for infinite products is more complicated. Indeed, if $I$ is infinite then the map $d_p$ is not necessary a distance on $\prod_{i\in I}X_i$ as it can take infinite values.
The solution consists of looking at the subset of $\prod_{i\in I}X_i$ on which $d_p$ takes finite values.
Formally we first need to chose a base-point $x_i$ in $X_i$ for each $i\in I$, which gives us an element $f_0\in \prod_{i\in I}X_i$ defined by $f_0(i)=x_i$. We can then define
\begin{equation*}
\sideset{}{}\bigoplus_{i \in I}X_i \coloneqq\setst{f\in \prod_{i\in I}X_i}{f(i)=f_0(i)\textnormal{ for all but finitely many }i}
\end{equation*}
and%\GS{Changé mise en page}
\begin{equation*}
\sideset{}{^p}\bigoplus_{i \in I}X_i \coloneqq\Bigl\{f\in \prod_{i\in I}X_i\,\Big|\,\sum_i d_i\bigl(f(i),f_0(i)\bigr)^p<\infty\Bigr\}.
\end{equation*}
%\GS{Pas très clair pour moi pourquoi $p$ apparait dans la définition?}
%\begin{align*}
%	\sideset{}{}\bigoplus_{i \in I}X_i&\coloneqq\setst{f\in I}{f(i)=f_0(i)\textnormal{ for all but finitely many }i}\qquad\textnormal{and}\\
%	\sideset{}{^p}\bigoplus_{i \in I}X_i&\coloneqq\setst{f\in I}{\sum_i d_i\bigl(f(i),f_0(i)\bigr)<\infty}.
%\end{align*}
A priori, the above definitions depend on the choice of the $x_i$. However, since our results will not depend on a particular choice of base-points, we will omit to specify it.
Moreover, if $\Aut_\CatS(X)$ acts transitively on $X$, then $\bigoplus_{i \in I}^pX$ will not depend on the choice of $x\in X$. If all the $X_i$ are equal to $\R$ with $x_i =0$, then $\bigoplus_{i \in I}^pX_i$ is the classical Banach space $\ell^p(I)$ while $\bigoplus_{i \in I}X_i$ is the (non-complete) sequence space $c_{00}(I)$.

It is straightforward to verify that $\bigoplus_{i\in I} X_i\subseteq\bigoplus_{i\in I}^p X_i\subseteq\prod_{i\in I}X_i$ and that the first inclusion is an equality if the $X_i$ are uniformly discrete with an uniform lower bound on their packing radius.
Moreover the restriction of the map $d_p$ to $\bigoplus_{i\in I}^p X_i$ is a distance.
While $(\bigoplus_{i\in I} X_i, d_p)$ is a metric space, it is in general not complete even when the $X_i$ are complete, which is why we needed to define $\bigoplus_{i\in I}^p X_i$; this will be important for Banach and Hilbert spaces.

One common feature of the product distances $d_p$ for $p\neq\infty$ is that, in some rough way, they are able to detect the number of coordinates on which $f$ differs from $f_0$.
Our last axiom will generalize this comportment.
\begin{defn}[Axiom (A3)]\label{Def:InfiniteCartesian}
A subcategory \CatS{} of \PMet{} satisfies \emph{axiom (A3)} if for any \CatS-space $X$, any element $x_0$ of $X$ and any infinite set $I$, there exists an \CatS-object, called the \emph{$I$\textsuperscript{th} Cartesian power of $X$} and written $\bigoplus_I ^\CatS X$, such that:
\begin{enumerate}
\item
As sets we have the inclusions $\bigoplus_I X\subseteq \bigoplus_I^\CatS X\subseteq \prod_I X$,
%\item
%For each $i\in I$ and $x\in X$, the set $\setst{f\in X^{(I)}}{f(j)=x\textnormal{ if }j\neq i}$ is contained in $X^{(I)}$, and when endowed with the restriction of the metric from $X^{(I)}$ it is an \CatS-space isomorphic to $X$ via $f\mapsto f(i)$,
\item
The canonical image of $\Aut_{\CatS}(X)\wr_I \Sym(I)$ in $\Bij(\prod_I X)$ lies in $\Aut_{\CatS}(\bigoplus_I^\CatS X)$,
\item
For any $y$ in $X$ with $d(y,x_0)>0$, the following set has infinite diameter
%$\setst{f\in \bigoplus_I X}{f(x)=x_0 \textnormal{ for all but finitely many } x \textnormal {for which } f(x)=y}$
\[
	\Bigl\{f\in \bigoplus_I X\,\Big|\,f(i)=y \textnormal{ for finitely many } i \textnormal{ and otherwise }f(i)=x_0\Bigr\}.
\]
%has infinite diameter.
\end{enumerate}
\end{defn}
An object satisfying the first two items of Definition~\ref{Def:InfiniteCartesian} will be called an infinite Cartesian power.

In practice, the above definition is often easy to verify.
Indeed, in most cases when \CatS{} has finite Cartesian powers it is for some product metric of the form $d_p$.
Then the metric space $(\bigoplus^p_IX,d_p)$ will usually be an infinite Cartesian power in \CatS{} and, if $p\neq\infty$, it will satisfies (A3).
In particular, (A3) holds in the following categories: metric spaces, (reflexive) Banach spaces (with $d_p$ for $1<p<\infty$), Hilbert spaces and $L^p$ spaces if $p\neq\infty$, connected (median) graphs.
On the other hand, (real) trees, hyperbolic spaces and $Z$ from Example~\ref{Exmp:BZ} do not have a sensible notion of infinite Cartesian powers. Finally, while ultra-metric spaces, $L^\infty$ spaces and spaces of finite diameter have infinite Cartesian powers (for $d_\infty$), axiom (A3) does not hold as the diameter of the set appearing in Definition~\ref{Def:InfiniteCartesian} is $d(y,x_0)$.

Finally, we introduce one last definition
\begin{defn}
A subcategory \CatS{} of \PMet{} has \emph{bornological Cartesian powers} if it satisfies both axiom (A2) and (A3).
\end{defn}
\subsection{Variations and generalizations}
%\begin{rem}
%In view of Definition~\ref{Def:PropBS} and~\ref{Def:Cartesian}, the reader might ask why we are working in \PMet{} instead of $\category{Born}$, the category of bornological spaces together with bounded maps.
%The reason behind this is the forthcoming Lemma~\ref{Lemma:InterProd} and its corollaries, which fail for general bornological spaces.
%\end{rem}
This subsection is devoted to variations and generalizations of property~\BS.
It is intended as a note for the interested reader, and can be skipped without any harm.
\paragraph{Groups acting with fixed point on \CatS-spaces}
Some of the properties that are of interest for us have been historically defined via the existence of a fixed point for some action.
More generally, we say that a group $G$ has \emph{property~\FS} if any $G$-action on an \CatS-space has a fixed point.

Since our actions are by isometries, property~\FS{} implies property~\BS. The other implication holds as soon as we have a suitable notion of the center of a (non-empty) bounded subset $X$.
For a large class of metric spaces, this is provided by the following result of Bruhat and Tits:
\begin{prop}[{\cite[Chapter 3.b]{MR1023471}}]\label{Proposition:Mediane}
Let $(X,d)$ be a complete metric space such that the following two conditions are satisfied:
\begin{enumerate}
\item For all $x$ and $y$ in $X$, there exists a unique $m\in X$ (the middle of $[x,y]$) such that $d(x,m)=d(y,m)=\frac12d(x,y)$;
\item For all $x$, $y$ and $z$ in $X$, if $m$ is the middle of $[y,z]$ we have the median's inequality  $2d(x,m)^2+\frac12d(y,z)^2\leq d(x,y)^2+d(x,z)^2$.
\end{enumerate}
Then if $G$ is a group acting by isometries on $X$ with a bounded orbit, it has a fixed point.
\end{prop}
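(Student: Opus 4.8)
The plan is to run the classical ``circumcenter'' construction. Assume $G$ acts on $X$ by isometries with a bounded orbit, and put $Y\coloneqq G.x_0$ for some $x_0\in X$; then $Y$ is a non-empty bounded subset of $X$ that is setwise invariant under $G$. Define the radius function $\rho\colon X\to\R_{\geq0}$ by $\rho(x)\coloneqq\sup_{y\in Y}d(x,y)$. It is finite since $Y$ is bounded, and $1$-Lipschitz (hence continuous) by the triangle inequality. Set $R\coloneqq\inf_{x\in X}\rho(x)$. The goal is to show that this infimum is attained at a \emph{unique} point $c\in X$; since $Y$ is $G$-invariant and $G$ acts by isometries, uniqueness will immediately force $c$ to be a global fixed point.

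First I would prove existence of the circumcenter $c$. Choose a minimizing sequence $(x_n)$ with $\rho(x_n)\to R$, and for indices $n,m$ let $\mu_{n,m}$ be the middle of $[x_n,x_m]$, which exists by hypothesis~(1). For every $y\in Y$, apply the median inequality~(2) with $x=y$ and with $x_n,x_m$ in the roles of $y,z$: this gives $2\,d(y,\mu_{n,m})^2+\frac12 d(x_n,x_m)^2\leq d(y,x_n)^2+d(y,x_m)^2\leq\rho(x_n)^2+\rho(x_m)^2$. Taking the supremum over $y\in Y$ yields
\[
2\,\rho(\mu_{n,m})^2+\frac12\,d(x_n,x_m)^2\leq\rho(x_n)^2+\rho(x_m)^2 .
\]
Since $\rho(\mu_{n,m})\geq R$, we obtain $\frac12 d(x_n,x_m)^2\leq\rho(x_n)^2+\rho(x_m)^2-2R^2$, and the right-hand side tends to $0$ as $n,m\to\infty$; hence $(x_n)$ is Cauchy. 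By completeness it converges to some $c\in X$, and continuity of $\rho$ gives $\rho(c)=R$.

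Uniqueness follows from the same computation applied to two minimizers: if $\rho(c)=\rho(c')=R$ and $\mu$ is the middle of $[c,c']$, then $2R^2\leq2\,\rho(\mu)^2\leq\rho(c)^2+\rho(c')^2-\frac12 d(c,c')^2=2R^2-\frac12 d(c,c')^2$, so $d(c,c')=0$, i.e. $c=c'$. Finally, for any $g\in G$ the isometry $g$ maps $Y$ onto $Y$, so $\rho(g.c)=\sup_{y\in Y}d(g.c,y)=\sup_{y\in Y}d(c,g^{-1}.y)=\sup_{y'\in Y}d(c,y')=\rho(c)=R$; by uniqueness $g.c=c$. Thus $c$ is fixed by all of $G$, which is the claim.

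The single delicate point is the Cauchy estimate for the minimizing sequence: this is precisely where hypotheses~(1) and~(2) together with completeness are used in concert, and everything else is routine bookkeeping. (One could instead replace $Y$ by its closure $\overline{G.x_0}$ to make it closed, but this is unnecessary since $\sup_{y\in Y}d(x,y)=\sup_{y\in\overline Y}d(x,y)$.)
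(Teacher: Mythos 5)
Your proof is correct and is exactly the classical Bruhat--Tits circumcenter argument that the paper relies on by citation (it gives no proof of its own, referring to \cite[Chapter 3.b]{MR1023471}). All the steps check out: the Cauchy estimate from the median inequality, the passage to the limit by completeness and continuity of the radius function, and the uniqueness-plus-invariance conclusion.
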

Examples of complete metric spaces satisfying Proposition~\ref{Proposition:Mediane} include among others: Hilbert spaces, Bruhat-Tits Buildings, Hadamard spaces (i.e. complete CAT(0) spaces and in particular CAT(0) cube complexes which are either finite dimensional or locally finite), trees and $\R$-trees; with the caveat that for trees and $\R$-trees, the fixed point is either a vertex or the middle of an edge.
See~\cite[Chapter 3.b]{MR1023471} and the references therein for more on this subject.
On the other hand,~\cite[Lemma 2.2.7]{MR2415834} gives a simple proof of the existence of a center for bounded subsets of Hilbert spaces, and more generally of reflexive Banach spaces, but this also directly follows from the Ryll-Nardzewski fixed-point theorem.
Finally, properties \FS{} and \BS{} are equivalent for the category of separable  uniformly  convex Banach  spaces by the existence of the Chebyshev center of a (nonempty) bounded set.

For action on metric spaces or on connected median graphs, \FS{} is strictly stronger than \BS.
Indeed, this trivially follows from the action by rotation of $\Z/4\Z$ on the square graph.
However,  by~\cite{MR1663779,Rol98} if a group $G$ acts on a connected median graph with a bounded orbit, then it has a finite orbit.
For actions on an ultra-metric spaces \FS{} is strictly stronger than \BS{} since the finite group  $\Z/2\Z$ acts without fixed point on the Cantor space $X\subset[0,1]$ by $x\mapsto 1-x$.
Property~\FS{} is also strictly stronger than \BS{} for \CatS{} the category of all Banach spaces. Indeed, by \cite{MR2656670} any infinite discrete group admits an action without fixed point on some Banach space and hence does not have property~\FS, while there exists infinite groups with the Bergmann's property which implies property~\BS.
\paragraph{Actions with uniformly bounded orbits}
%\begin{rem}
%We conclude this section by a remark on a variation of Definition~\ref{Def:FHFA}.
One might wonder what happens if in Definition~\ref{Def:FHFA} we replace the requirement of having bounded orbits by having uniformly bounded orbits.
It turns out that this is rather uninteresting, as a group $G$ is trivial if and only if any $G$-action on a metric space (respectively on an Hilbert space, on a connected median graph, on a tree or on an ultrametric space) has uniformly bounded orbits.
Indeed, if $G$ is non-trivial, then, for the action of $G$ on the Hilbert space $\ell^2(G)$ the orbit of $n\cdot \delta_g$ has diameter $n\sqrt2$.
For a tree (and hence also for a connected median graph), one may look at the tree $T$ obtained by taking a root $r$ on which we glue an infinite ray for each element of~$G$.
Then $G$ naturally acts on $T$ by permuting the rays.
The orbits for this action are the $\mathcal L_n=\setst{v}{d(v,r)=n}$ which have diameter $2n$.
Finally, it is possible to put an ultradistance on the vertices of $T$ by  $d_\infty(x,y)\coloneqq\max\{d(x,r),d(y,r)\}$ if $x\neq y$.
Then the orbits are still the~$\mathcal L_n$, but this time with diameter $n$.
\paragraph{Topological groups}
One can wonder what happens for topological groups. While, the wreath product of topological groups is not in general a topological group, this is the case if $G$ is discrete and $X$ is a discrete set endowed with a continuous $H$-action.
In this particular context, Theorem~\ref{Thm:Technic}, as well as its proof, remains true.
The details are left to the interested reader.
\paragraph{Categorical generalizations}
In the above, we defined property~\BS{} for \CatS{} a subcategory of \PMet.
It is possible to generalize this definition to more general categories.
We are not aware of any example of the existence of a group property arising in this general context that is not equivalent to a property~\BS{} in the sense of Definition~\ref{Def:PropBS}, but still mention it for the curious reader.

On one hand, we can replace \PMet{} with a more general category.
For example, one can look at the category $\category{M}$ of sets $X$ endowed with a map $d\colon X\times X\to\R_{\geq 0}$ satisfying the triangle inequality.
That is, $d$ is a pseudo-distance, except that is is not necessary symmetric and $d(x,x)$ may be greater than $0$.
All the statements and the proofs remain true for \CatS{} a subcategory of $\category{M}$.

On the other hand, we can define property~\BS{} for any category \CatS{} over \PMet, that is for any category~\CatS{} endowed with a faithful functor $F\colon\CatS\to \PMet$.
Such a couple $(\CatS,F\colon\CatS\to \PMet)$ is sometimes called a \emph{structure over \PMet}, and $F$ is said to be \emph{forgetful}.
In this context, we need to be careful to define Cartesian powers (Definitions~\ref{Def:Cartesian} and~\ref{Def:InfiniteCartesian}) using $F$, but apart for that all the statements and all the proofs remain unchanged.
An example of such an \CatS{} that cannot be expressed as a subcategory of \PMet{} is the category of edge-labeled graphs, where the morphisms are graph morphisms that induce a permutation on the set of labels.
However, in this case the property~\BS{} is equivalent to the Bergman's property.

One can also combine the above two examples and look at couples $(\CatS,F\colon\CatS\to \category{M})$, with $F$ faithful.

Finally, in view of Definitions~\ref{Def:PropBS}, \ref{Def:Cartesian} and ~\ref{Def:InfiniteCartesian}, the reader might ask why we are working in \PMet{} or $\category{M}$ instead of $\category{Born}$, the category of bornological spaces together with bounded maps.
The reason behind this is the forthcoming Lemma~\ref{Lemma:InterProd} and its corollaries, which fail for general bornological spaces.
In fact, all the statements and the proofs remain true for a general $(\CatS,F\colon\CatS\to \category{Born})$ as soon as \CatS{} satisfies Lemma~\ref{Lemma:InterProd}.
Here is an example of such an \CatS{} which does not appear as a category over \category{M}.
Take $\kappa$ to be an infinite cardinal and let $\mathbf{S}_\kappa$ be the subcategory of $\category{Born}$ where a subset $E$ of a $\mathbf{S}_\kappa$-space is bounded if and only if $\lvert E\rvert<\kappa$.
A group $G$ has property B$\mathbf{S}_\kappa$ if and only if $\lvert G\rvert<\kappa$.
\section{Proofs of the main results}
\label{Section:Proof}
Throughout this section, \CatS{} will denote a subcategory of \PMet{} and \BS{} the group property~\emph{every action by \CatS-automorphisms on an \CatS-space has bounded orbits}.
%Heuristically, an \CatS-space is a (quasipseudo-) metric space with an additional structure (as for example an Hilbert space). For a precise definition, see Definition~\ref{Def:PropBS}.
In Subsection~\ref{SubSec:CatS}, we defined $3$ axioms that \CatS{} might satisfy. Axiom (A1) simply states that a non-trivial group acts non-trivially on some \CatS-space. Axioms (A2) and (A3) guarantee the existence of finite and infinite Cartesian powers, which should be compatible in some sense with the bornology. Finally, \CatS{} has bornological Cartesian powers if it satisfy both axioms (A2) and (A3).
In Table~\ref{Table1} we present a short reminder on whenever these axioms are satisfied for some subcategories of \PMet{} that were mentioned in Sections~\ref{Section:Intro} and~\ref{Section:Def}.
\begin{table}
\begin{tabular}{*{2}{|c}|*{3}{|c}|}
\hline
\multirow{2}{*}{Category \CatS}&Corresponding&\multicolumn{3}{c|}{satisfies axiom}\\\cline{3-5}
  &group property & (A1) & (A2) & (A3)\\
\hline
\hline
Metric spaces	&Bergmann's property&\checkmark&\checkmark&\checkmark\\
\hline
Banach spaces			&BB			&\checkmark&\checkmark&\checkmark\\
\hline
Reflexive Banach spaces	&\FB		&\checkmark&\checkmark&\checkmark\\
\hline
$L^p$ spaces ($p$ fixed)	&B$L^p$	&\checkmark&\checkmark		  &iff $p\neq\infty$\\
\hline
Hilbert spaces			&\FH		&\checkmark&\checkmark&\checkmark\\
\hline
$\R$ and $\C$ hyperbolic spaces&FHyp$_{\field{C}}$&\xmark&\xmark&\xmark\\
\hline
Median graphs			&\FW		&\checkmark&\checkmark&\checkmark\\
\hline
Real trees				&\FR		&\checkmark&\xmark	  &\xmark\\
\hline
Trees					&\FA		&\checkmark&\xmark	  &\xmark\\
\hline
Ultrametric spaces	&uncountable cofinality	&\checkmark&\checkmark&\xmark\\
\hline
Z$=2$-regular tree with $\Isom^+$	&B$\Z$	&\xmark	&\xmark	&\xmark\\
\hline
Spaces of finite diameter&hold for all groups		&\checkmark&\checkmark&\xmark\\
\hline
\end{tabular}
\caption{Axioms for category \CatS. (A1) = non-trivial group actions (Definition~\ref{Def:NonTrivialAction}), (A2) = Definition~\ref{Def:Cartesian}, (A3) = Definition~\ref{Def:InfiniteCartesian}, bornological Cartesian powers = (A2)+(A3).}
\label{Table1}
\end{table}

The main result of this section is the following theorem that implies Theorem~\ref{Thm:Main}.
\begin{thm}\label{Thm:Technic}
Suppose that \CatS{} has non-trivial group actions and bornological  Cartesian powers.
Let $G$ and $H$ be two groups with $G$ non-trivial and let $X$ be a set on which $H$ acts. Then the wreath product $G \wr_X H$ has property~\BS{} if and only if $G$ and $H$ have property~\BS{} and $X$ is finite.
\end{thm}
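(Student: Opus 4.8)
The plan is to prove the two directions separately, using the three axioms in distinct places. For the easy direction, suppose $G$ and $H$ have property~\BS{} and $X$ is finite. Then $\bigoplus_X G = \prod_X G = G^{\#X}$ is a finite direct power of $G$, and since property~\BS{} passes to finite direct powers (an action of $G^n$ on an \CatS-space restricts to an action of each factor, and the orbit of the whole group is contained in a finite ``sum'' of the orbits of the factors — here one could also invoke that $G\wr_X H$ is built from $G$, $H$ by finitely many extensions) one gets that $\bigoplus_X G \rtimes H = G\wr_X H$ has property~\BS{}. The cleanest route is: property~\BS{} is stable under extensions and under passing to finite-index overgroups; since $X$ finite makes $G\wr_X H$ an extension of the finite direct power $\bigoplus_X G$ (itself \BS, being a finite power of a \BS{} group) by $H$ (which is \BS), we are done. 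I would phrase stability under extensions as a small lemma if it is not already available: if $1\to N\to E\to Q\to 1$ with $N,Q$ having \BS, then for an $E$-action on an \CatS-space $X$ and a point $x$, the $N$-orbit of $x$ is bounded, and then one needs the orbit of $x$ under $E$ to be bounded — this is the point where one uses that bounded sets behave well, and it may in fact require more than mere extension-stability in a completely general category, so I would instead argue directly with $\bigoplus_X G$ and $H$ separately inside $G\wr_X H$ using the diagonal trick below. Let me set that aside and concentrate on the substantive direction.

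For the hard direction, assume $G\wr_X H$ has property~\BS{}; we must show $G$ and $H$ have \BS{} and $X$ is finite. That $H$ has \BS{} is immediate since $H$ is a quotient of $G\wr_X H$ and property~\BS{} passes to quotients: given an $H$-action on an \CatS-space, precompose with $G\wr_X H \twoheadrightarrow H$ to get a $G\wr_X H$-action with the same orbits. For $G$: fix $x_0\in X$; then $G$ embeds in $G\wr_X H$ as the copy $\bigoplus_{\{x_0\}}G$, so $(G\wr_X H, G)$ having relative \BS{} gives that every $G\wr_X H$-action has bounded $G$-orbits — but we need this for an arbitrary $G$-action, not one extended to $G\wr_X H$. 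Here is where axiom (A2) enters, and this is the main obstacle: given a $G$-action on an \CatS-space $Y$ with a (hypothetically) unbounded orbit $G.y$, I would form the finite Cartesian power $Y^n$ for suitable $n$ and let $\bigoplus_X G \rtimes H$ act on $Y^n$ by letting a suitable finite ``block'' of copies of $G$ (indexed by points in one $H$-orbit) act coordinatewise and letting $H$ permute coordinates via item~(\ref{Item:Product}) of (A2); because $X$ might be infinite, one must be careful to use only finitely many coordinates or to pass to an infinite Cartesian power via (A3). The clean statement is: combine the $G$-action on $Y$ with the permutation action of $H$ on $X$ to get an action of $G\wr_X H$ on $\bigoplus_X^\CatS Y$ (using (A3) to make sense of the infinite power and the wreath-permutation part of its automorphism group), and then the diagonal $\diag(G.y)$ inside this power is unbounded by item~(\ref{Condidef:2}) of (A2) applied inside each finite sub-power — contradicting \BS{} of $G\wr_X H$. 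So $G$ has \BS.

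Finally, to show $X$ is finite, suppose $X$ is infinite. By axiom (A1), $G$ being non-trivial admits a non-trivial \CatS-action on some \CatS-space $Z$, i.e.\ there is $g\in G$ and $z\in Z$ with $d(z,g.z)>0$; set $y = g.z$ and $x_0 = z$ so that $d(x_0,y)>0$. Now form $\bigoplus_X^\CatS Z$ via axiom (A3); the group $G\wr_X H$ acts on it, with $\bigoplus_X G$ acting coordinatewise (each coordinate copy of $G$ acting on $Z$) and $H$ acting by permuting the $X$-coordinates, which is exactly the image of $\Aut_\CatS(Z)\wr_X \Sym(X)$ covered by item~2 of (A3). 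Consider the element $\varphi_0\in\bigoplus_X G$ that is $g$ on one coordinate and $1$ elsewhere; its $G\wr_X H$-orbit contains all the configurations that are $g$ on finitely many coordinates and $1$ elsewhere (because $H$ translates the support and the base-group fills it in), and applying these to the base-point $f_0\equiv x_0$ produces exactly the set $\{f\in\bigoplus_X Z : f(i)=y \text{ on finitely many } i,\ f(i)=x_0 \text{ otherwise}\}$ from item~3 of (A3), which has infinite diameter — contradicting that $G\wr_X H$ has \BS. Hence $X$ is finite. The main obstacle throughout is bookkeeping the automorphism-group memberships in items~(\ref{Item:Product}) of (A2) and item~2 of (A3) so that the ``combined'' action of the wreath product really does land in $\Aut_\CatS$ of the Cartesian power; once that is granted, the three unboundedness clauses (item~(\ref{Condidef:2}) of (A2) and item~3 of (A3)) do exactly the work needed.
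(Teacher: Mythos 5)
Your overall architecture matches the paper's: $H$ inherits \BS{} as a quotient; axiom (A1) together with the infinite-power axiom (A3) forces $X$ to be finite; axiom (A2) plus a diagonal argument recovers \BS{} for $G$; and the converse reduces to boundedness of orbits under a product of two subgroups. However, the step ``$G$ has \BS'' contains a genuine flaw as written. Your ``clean statement'' lets $G\wr_XH$ act on the infinite Cartesian power $\bigoplus_X^{\CatS}Y$ and claims $\diag(G.y)$ is unbounded there by item~\ref{Condidef:2} of (A2) ``applied inside each finite sub-power''. This fails twice over: first, the diagonal of $Y$ consists of constant functions, which in general do not lie in $\bigoplus_X^{\CatS}Y$ when $X$ is infinite (for $Y$ a Hilbert space, $\ell^2(X)$ contains no non-zero constant function); second, axiom (A3) asserts nothing about diagonals, and item~\ref{Condidef:2} of (A2) only concerns the metric on the finite power $Y^n$, which need not be comparable to the restriction of the metric of the infinite power. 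The repair is to reorder your steps: prove first that $X$ is finite (your last paragraph, which is essentially correct), and only then run the diagonal argument on the finite power $Y^X$ with the primitive action $\bigl((\varphi,h).\psi\bigr)(x)=\varphi(h^{-1}.x).\psi(h^{-1}.x)$; this is exactly the paper's Lemma~\ref{Lemma:Unboundedness} and needs only (A2).

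Two smaller points. In the $X$-finiteness step, the configurations equal to $y$ on a finite subset and $x_0$ elsewhere are reached directly by elements of $\bigoplus_XG$, which contains elements of arbitrary finite support; do not route this through ``$H$ translates the support'', since $H$ may act on $X$ with finite orbits. For the easy direction, your worry that extension-stability could fail for a general \CatS{} is well founded (the paper establishes it only for Bergman's property and uncountable cofinality, where quotient pseudo-metrics exist), but the correct general substitute is not a diagonal trick: it is the purely metric observation that if $E=AB$ and both $A.x$ and $B.x$ have finite diameters $D_1$ and $D_2$, then every point of $E.x$ lies within $D_1+D_2$ of $x$ because $A$ acts by isometries (the paper's Lemma~\ref{Lemma:InterProd}); applied with $A=\bigoplus_XG$ (a finite power of $G$, hence \BS{} by iterating the same lemma) and $B=H$, this requires no axioms at all.
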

Theorem~\ref{Thm:Technic} is a direct consequence of the forthcoming Corollary~\ref{Cor:Wreath} and Lemmas~\ref{Lemma:XFinite} and~\ref{Lemma:Unboundedness}.
The conclusion of Theorem~\ref{Thm:Technic} remains true if the hypothesis on \CatS{} are replaced  by ``\CatS{} satisfies (A2) and property \BS{} implies property \FW'', see the discussion after Lemma~\ref{Lemma:XFinite} for more details.

We now state two elementary but useful results.
\begin{lem}\label{Lemma:Quotient}
Let $G$ be a group and $H$ be a quotient.
If $G$ has property~\BS, then so has $H$.
\end{lem}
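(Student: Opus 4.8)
The plan is to deduce property~\BS{} for a quotient $H=G/N$ from property~\BS{} for $G$ by pulling back any $H$-action to a $G$-action. Concretely, let $\pi\colon G\to H$ be the quotient map and suppose $H$ acts on an \CatS-space $X$ by \CatS-automorphisms via $\alpha\colon H\to\Aut_{\CatS}(X)$. Then $\alpha\circ\pi\colon G\to\Aut_{\CatS}(X)$ is a $G$-action by \CatS-automorphisms on the same \CatS-space $X$.

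Since $G$ has property~\BS, every orbit of this $G$-action is bounded; I would fix a point $x\in X$ and note that $G.x$ is bounded. The key observation is that the $G$-orbit and the $H$-orbit of $x$ coincide as subsets of $X$: for any $h\in H$ there is $g\in G$ with $\pi(g)=h$ (surjectivity of $\pi$), so $h.x=\alpha(h)(x)=\alpha(\pi(g))(x)=(\alpha\circ\pi)(g)(x)$, which lies in $G.x$; conversely every element of $G.x$ is clearly of the form $h.x$ for some $h\in H$. Hence $H.x=G.x$ is bounded. As this holds for every $x\in X$ and every $H$-action on every \CatS-space, $H$ has property~\BS.

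There is essentially no obstacle here: the statement is purely formal, relying only on surjectivity of the quotient map and on the fact that an $H$-action composed with $\pi$ is a legitimate $G$-action in the same category (which is immediate since a group homomorphism into $\Aut_{\CatS}(X)$ is exactly what an \CatS-action is, per Definition~\ref{Def:PropBS}). The only mild point worth stating explicitly is that ``property~\BS'' can equivalently be checked by exhibiting one bounded orbit, as remarked after Definition~\ref{Def:PropBS}, so it suffices to bound a single orbit of the pulled-back action; but even bounding all orbits is immediate from the orbit-equality above.
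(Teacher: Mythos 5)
Your proof is correct and follows essentially the same route as the paper's: both pull back an $H$-action to a $G$-action along the surjection $G\twoheadrightarrow H$ and observe that the orbits coincide (the paper merely phrases this in the contrapositive). No gap.
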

\begin{proof}
%We have $H\cong G/N$.
If $H$ acts on some \CatS-space $X$ with an unbounded orbit, then the surjection $G\twoheadrightarrow H$ gives us a $G$-action on~$X$, with the same orbits as the $H$-action.
%If $H$ acts continuously\PH{J'ai réécrit la preuve} on some \CatS-space $X$ with an unbounded orbit, then the surjection $G\twoheadrightarrow H$ gives us a continuous $G$-action on~$X$, with the same orbits as the $H$-action.
\end{proof}
\begin{lem}\label{Lemma:InterProd}
Let $G$ be a group and $A$ an $B$ be two subgroups such that $G=AB$.
If both $(G,A)$ and $(G,B)$ have relative property~\BS, then $G$ has property~\BS.
\end{lem}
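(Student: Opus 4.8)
The plan is to argue by contraposition: assume $G$ acts by $\CatS$-automorphisms on an $\CatS$-space $X$ with an unbounded orbit, and derive that one of the two relative properties fails. So let $x_0 \in X$ be a point whose orbit $G.x_0$ is unbounded. The key observation is that every element $g \in G$ can be written as $g = ab$ with $a \in A$, $b \in B$, hence $g.x_0 = a.(b.x_0)$. By the triangle inequality,
\[
d(x_0, g.x_0) \le d(x_0, b.x_0) + d(b.x_0, a b.x_0) = d(x_0, b.x_0) + d(a^{-1}.x_0, b.x_0).
\]
The first term is controlled by the $B$-orbit of $x_0$; for the second term I would like to bound $d(a^{-1}.x_0, b.x_0)$ in terms of the $A$-orbit and the $B$-orbit of $x_0$, using $d(a^{-1}.x_0, b.x_0) \le d(a^{-1}.x_0, x_0) + d(x_0, b.x_0)$. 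Putting this together gives $d(x_0,g.x_0) \le d(x_0,a^{-1}.x_0) + 2\,d(x_0,b.x_0)$ for a suitable decomposition $g=ab$. Therefore, if both $A.x_0$ and $B.x_0$ were bounded, say by $R$, then $d(x_0, g.x_0) \le 3R$ for all $g \in G$, contradicting the unboundedness of $G.x_0$. Hence at least one of $A.x_0$, $B.x_0$ is unbounded, i.e. the pair $(G,A)$ or the pair $(G,B)$ fails to have relative property~\BS.

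Writing this out directly: suppose for contradiction that both $(G,A)$ and $(G,B)$ have relative property~\BS{} but $G$ does not have property~\BS. Take an $\CatS$-action of $G$ on $X$ with an unbounded orbit $G.x_0$. Since $(G,A)$ has relative property~\BS{} applied to this very $G$-action, the orbit $A.x_0$ is bounded, say of diameter $\le R_A$; likewise $B.x_0$ has diameter $\le R_B$. For any $g\in G$ pick $a\in A$, $b\in B$ with $g=ab$; then
\[
d(x_0,g.x_0)\le d(x_0,a.x_0)+d(a.x_0,ab.x_0)=d(x_0,a.x_0)+d(x_0,b.x_0)\le R_A+R_B,
\]
using that $a$ acts by an isometry. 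Hence $G.x_0$ has diameter at most $2(R_A+R_B)$, contradicting its unboundedness.

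I expect essentially no obstacle here: the only subtlety is that the relative properties are invoked for the given $G$-action on $X$ (not for some auxiliary space), which is exactly what the definition of relative property~\BS{} provides, and that the elements of $A$ act by genuine isometries so that $d(a.x_0, ab.x_0) = d(x_0, b.x_0)$. No use of the Cartesian-power axioms (A2), (A3) or of axiom (A1) is needed — this is a purely elementary orbit-diameter estimate, and it is the mechanism by which, later, one reduces statements about $G\wr_X H$ to statements about the two factors.
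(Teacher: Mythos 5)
Your proof is correct and is essentially the paper's own argument: decompose $g=ab$ with $a\in A$, $b\in B$, and bound $d(x_0,g.x_0)$ through the intermediate point $a.x_0$ using that $a$ acts by an isometry, so that both terms are controlled by the diameters of $A.x_0$ and $B.x_0$. (The identity $d(b.x_0,ab.x_0)=d(a^{-1}.x_0,b.x_0)$ in your preliminary sketch is incorrect, but your final write-up routes the triangle inequality through $a.x_0$ instead and is fine.)
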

\begin{proof}
Let $X$ be an \CatS-space on which $G$ acts %continuously
and let $x$ be an element of $X$.
%Then the restrictions of the $G$-action to $A$ and to $B$ are continuous.\PH{J'ai ajouté cette phrase}
Let $D_1$ be the diameter of $A.x$ and $D_2$ the diameter of $B.x$. By assumption, they both are finite.
Since $A$ acts by isometries, all the $a.Bx$ have diameter $D_2$.
Let $y$ be an element of $G.x$. There exists $a\in A$ such that $y$ belongs to $a.Bx$. Since $1$ belongs to $B$, $y$ is at distance at most $D_2$ of $a.x$ and hence at distance at most $D_1+D_2$ of $x$.
Therefore, the diameter of $G.x$ is finite.
\end{proof}
By combining Lemmas~\ref{Lemma:Quotient} and~\ref{Lemma:InterProd}, we obtain the following three corollaries on direct, semi-direct and wreath products.
\begin{cor}\label{Cor:Prod}
Let $G$ and $H$ be two groups. Then $G\times H$ has property~\BS{} if and only if both $G$ and $H$ have property~\BS.
\end{cor}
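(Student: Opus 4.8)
The plan is to derive Corollary~\ref{Cor:Prod} as a direct consequence of the two lemmas just proved, exploiting the fact that $G \times H$ factors as a product of two commuting subgroups.

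First I would set up the two inclusions. The group $G \times H$ contains $\bar G = G \times \{1\}$ and $\bar H = \{1\} \times H$ as subgroups, and clearly $G \times H = \bar G \bar H$, with $\bar G \cong G$ and $\bar H \cong H$. For the forward direction, suppose $G \times H$ has property~\BS. Both $G$ and $H$ are quotients of $G \times H$ (project onto each factor), so by Lemma~\ref{Lemma:Quotient} each of $G$ and $H$ has property~\BS.

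For the converse, suppose both $G$ and $H$ have property~\BS. I want to apply Lemma~\ref{Lemma:InterProd} with $A = \bar G$ and $B = \bar H$, so I need the relative properties: $(G \times H, \bar G)$ and $(G \times H, \bar H)$ both have relative property~\BS. This is where one uses that $G$ (hence $\bar G$) has property~\BS{} as an \emph{absolute} statement: given any $(G\times H)$-action on an \CatS-space $X$, restricting along the inclusion $\bar G \hookrightarrow G \times H$ gives a $\bar G$-action on $X$ (by \CatS-automorphisms), and since $\bar G \cong G$ has property~\BS, all $\bar G$-orbits on $X$ are bounded; that is precisely relative property~\BS{} for the pair $(G\times H, \bar G)$. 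The same argument with $\bar H$ gives relative property~\BS{} for $(G \times H, \bar H)$. Now Lemma~\ref{Lemma:InterProd} applies and yields that $G \times H$ has property~\BS.

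I do not expect any genuine obstacle here: the statement is essentially a formal consequence of Lemmas~\ref{Lemma:Quotient} and~\ref{Lemma:InterProd}, and the only point requiring a moment's care is the elementary but important observation that the \emph{absolute} property~\BS{} of a subgroup implies the \emph{relative} property~\BS{} of any pair in which it sits as a subgroup — which is exactly the remark already made in the excerpt after Definition~\ref{Def:PropBS}. (One should also note that commutativity of the two factors is not even needed for Lemma~\ref{Lemma:InterProd}; it is only needed to identify $\bar G$ and $\bar H$ with $G$ and $H$ and to see that their product is all of $G\times H$.)
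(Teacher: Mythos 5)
Your proof is correct and follows exactly the route the paper intends: the forward direction via Lemma~\ref{Lemma:Quotient} applied to the two projections, and the converse via Lemma~\ref{Lemma:InterProd} with $A=G\times\{1\}$, $B=\{1\}\times H$, using the observation (already made after Definition~\ref{Def:PropBS}) that absolute property~\BS{} of a subgroup yields relative property~\BS{} of the pair. Nothing is missing.
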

\begin{cor}\label{Cor:Semidirect}
Let $N\rtimes H$ be a %topological\PH{Ajouté ``topological''. Ça signifie que l'action de $H$ sur $N$ est continue. On a besoin de cela pour que produit semidirect interne et externe coincide et soient des groupes topologiques}
semidirect product. Then
\begin{enumerate}
\item
If $N\rtimes H$ has property~\BS, then so has $H$,
\item
If both $N$ and $H$ have property~\BS, then $N\rtimes H$ also has property~\BS.
\end{enumerate}
\end{cor}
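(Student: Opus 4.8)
The plan is to deduce both items directly from Lemmas~\ref{Lemma:Quotient} and~\ref{Lemma:InterProd}, which are exactly the tools assembled just above for this purpose.

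For the first item, I would observe that $H$ is a quotient of $N\rtimes H$: the canonical projection $N\rtimes H\twoheadrightarrow H$ with kernel $N$ is a surjective group homomorphism. Hence, if $N\rtimes H$ has property~\BS, Lemma~\ref{Lemma:Quotient} immediately yields that $H$ has property~\BS.

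For the second item, I would apply Lemma~\ref{Lemma:InterProd} with $G=N\rtimes H$, $A=N$ and $B=H$; these are subgroups of $G$ with $G=AB$. It then suffices to check that both $(G,N)$ and $(G,H)$ have relative property~\BS. This is immediate from the definitions: given any $G$-action by \CatS-automorphisms on an \CatS-space $X$, its restriction to the subgroup $N$ (respectively $H$) is an action of $N$ (respectively $H$) by \CatS-automorphisms, so since $N$ (respectively $H$) has property~\BS, the $N$-orbits (respectively $H$-orbits) in $X$ are bounded. Thus $(G,N)$ and $(G,H)$ have relative property~\BS, and Lemma~\ref{Lemma:InterProd} gives that $G=N\rtimes H$ has property~\BS.

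Since every step is a direct invocation of an already established statement, I do not anticipate any real obstacle. The only point meriting care is that relative property~\BS{} for a pair $(G,K)$ concerns the restriction to $K$ of an \emph{arbitrary $G$-action}, and it is precisely this formulation that makes the reduction go through: an arbitrary $G$-action restricts to a $K$-action, and property~\BS{} of the group $K$ controls exactly those restricted actions.
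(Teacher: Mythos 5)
Your proof is correct and follows exactly the route the paper intends: item 1 is Lemma~\ref{Lemma:Quotient} applied to the canonical projection $N\rtimes H\twoheadrightarrow H$, and item 2 is Lemma~\ref{Lemma:InterProd} applied to the decomposition $G=NH$, using the (stated in the paper) fact that a subgroup with property~\BS{} gives a pair with relative property~\BS. No gaps.
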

\begin{cor}\label{Cor:Wreath}
Let $G$ and $H$ be two groups and $X$ a set on which $H$ acts.%\PH{Il faut vérifier qu'on a bien un group top…}
Then,
\begin{enumerate}
\item
If $G\wr_X H$ has property~\BS, then so has $H$,
\item
If both $G$ and $H$ have property~\BS{} and $X$ is finite, then $G\wr_X H$ has property~\BS.
\end{enumerate}
\end{cor}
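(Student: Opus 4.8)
The plan is to read off both items from the material just established, namely Lemma~\ref{Lemma:Quotient}, Lemma~\ref{Lemma:InterProd}, and Corollaries~\ref{Cor:Prod} and~\ref{Cor:Semidirect}; no new idea should be required beyond recognising $G\wr_X H$ as built from direct powers of $G$ together with a semidirect product with $H$.

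For the first item I would argue as follows. Writing $G\wr_X H=(\bigoplus_X G)\rtimes H$, the canonical retraction onto $H$ (which kills the base group $\bigoplus_X G$) exhibits $H$ as a quotient of $G\wr_X H$. Hence, if $G\wr_X H$ has property~\BS, Lemma~\ref{Lemma:Quotient} immediately yields that $H$ has property~\BS; equivalently this is the first part of Corollary~\ref{Cor:Semidirect}. Note that this half imposes no condition on $X$ and does not even use that $G$ is non-trivial.

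For the second item, assume $G$ and $H$ have property~\BS{} and $X$ is finite, say $|X|=n$. Since $X$ is finite, $\bigoplus_X G=\prod_X G$ is isomorphic to the $n$-fold direct power $G^n$. Applying Corollary~\ref{Cor:Prod} inductively ($n-1$ times, with the trivial case $n\le 1$ handled directly: $G\wr_{\emptyset}H=H$ and $G\wr_{\{*\}}H=G\times H$) shows that $G^n$ has property~\BS. Then $G\wr_X H$ is precisely the semidirect product $G^n\rtimes H$ of two groups each having property~\BS, so the second part of Corollary~\ref{Cor:Semidirect} gives that $G\wr_X H$ has property~\BS, as desired.

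I expect no real obstacle at the level of this corollary: the whole content sits in Lemma~\ref{Lemma:InterProd}, which is already proved, and the only point worth spelling out is the routine observation that whenever a subgroup $K\le L$ has property~\BS, the pair $(L,K)$ has relative property~\BS, because the restriction to $K$ of an $L$-action on an \CatS-space is a $K$-action and therefore has bounded orbits. With that in hand, Corollaries~\ref{Cor:Prod} and~\ref{Cor:Semidirect} each follow from a single application of Lemma~\ref{Lemma:InterProd} to the factorisations $G\times H=(G\times 1)(1\times H)$ and $N\rtimes H=N\cdot H$, and the present corollary is then immediate.
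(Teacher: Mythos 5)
Your argument is correct and is essentially the paper's own proof: the paper derives this corollary exactly by combining Lemma~\ref{Lemma:Quotient} (for item 1, via the retraction onto $H$) with Lemma~\ref{Lemma:InterProd} through Corollaries~\ref{Cor:Prod} and~\ref{Cor:Semidirect} (for item 2, via $\bigoplus_X G\cong G^n$ and the semidirect decomposition). Your remark that a subgroup with property~\BS{} automatically gives the relative property for the pair is also the observation the paper records just after introducing relative property~\BS, so nothing is missing.
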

When \CatS{} has a suitable notion of quotients (by a group of isometries), it is possible to obtain a strong version of Lemmas~\ref{Lemma:Quotient} and~\ref{Lemma:InterProd}.
Here is the corresponding result for Bergman's property and uncountable cofinality.
\begin{prop}\label{Prop:Extension}
Let \BS{} be either Bergman's property or the property of having uncountable cofinality.
Let $1\to N\to G\to H\to 1$ be a group extension.
Then $G$ has property~\BS{} if and only if $H$ has property~\BS{} and the pair $(G,N)$ has relative \BS{} property.
\end{prop}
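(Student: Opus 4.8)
The plan is to treat the two directions separately, handling the substantial one by a quotient-pseudometric construction. For the ``only if'' direction there is nothing new: $H$ is a quotient of $G$, hence has property~\BS{} by Lemma~\ref{Lemma:Quotient}; and if every orbit of a $G$-action on an \CatS-space is bounded then in particular every $N$-suborbit is bounded, which is exactly relative property~\BS{} for the pair $(G,N)$.

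For the ``if'' direction, suppose $H$ has property~\BS{} and $(G,N)$ has relative property~\BS. Let $G$ act by isometries on a space $X$; by Lemma~\ref{Lemma:BergQPMet} we may assume $X$ is a pseudo-metric space (respectively an ultra-pseudo-metric space). Fix $x_0\in X$; the goal is to bound $G.x_0$. By relative property~\BS{} the orbit $N.x_0$ has finite diameter $D$, and since $N\trianglelefteq G$ and $G$ acts by isometries, every orbit $N.(g.x_0)=g.(N.x_0)$ again has diameter $D$. On the set $X/N$ of $N$-orbits I would put
\[
	\bar d(N.a,\,N.b)\coloneqq\inf_{n\in N}d(a,n.b),
\]
and check the following routine facts: $\bar d$ is a pseudo-distance (respectively an ultra-pseudo-distance) --- the triangle inequality (respectively the strong one, with $\max$) comes from $d(a,nn'.c)\le d(a,n.b)+d(n.b,nn'.c)=d(a,n.b)+d(b,n'.c)$ after taking infima, symmetry from replacing $n$ by $n^{-1}$, and $\bar d(N.a,N.a)=0$ from $n=1$; the formula $g.(N.a)=N.(g.a)$ (valid since $N$ is normal) defines an action of $G$ on $X/N$; this action is by isometries for $\bar d$, using $g^{-1}Ng=N$; and $N$ lies in its kernel, so the action factors through an isometric $H$-action on $(X/N,\bar d)$.

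Now invoke property~\BS{} of $H$ together with Lemma~\ref{Lemma:BergQPMet} once more: the $H$-orbit of $N.x_0$ in $(X/N,\bar d)$ is bounded, say of diameter $D'$. For each $g\in G$ and $\varepsilon>0$ choose $n\in N$ with $d(x_0,n.(g.x_0))\le\bar d(N.x_0,N.(g.x_0))+\varepsilon\le D'+\varepsilon$; since $n.(g.x_0)$ and $g.x_0$ both lie in $N.(g.x_0)$, a set of diameter $D$, the triangle inequality gives $d(x_0,g.x_0)\le D'+\varepsilon+D$, and letting $\varepsilon\to0$ yields $d(x_0,g.x_0)\le D+D'$. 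Hence $G.x_0$ has diameter at most $2(D+D')$, so $G$ has property~\BS.

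The step that requires care --- and the reason the statement is confined to Bergman's property and to uncountable cofinality --- is that $(X/N,\bar d)$ is in general only a \emph{pseudo}-metric space even when $X$ is a genuine metric space, so one really needs Lemma~\ref{Lemma:BergQPMet} to feed the quotient space back into the hypothesis on $H$; one must also note that the quotient construction preserves the strong triangle inequality, which is what makes the ultra-pseudo-metric case go through. For properties such as~\FH{} or~\FW{} the quotient of an \CatS-space by an isometric group action need not be an \CatS-space, and this argument breaks down --- consistently with the fact that those properties are only known to behave well for split extensions (Corollary~\ref{Cor:Semidirect}).
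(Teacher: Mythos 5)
Your proof is correct and takes essentially the same route as the paper's: both pass to the quotient pseudo-metric $\bar d$ on $X/N$, use normality of $N$ and Lemma~\ref{Lemma:BergQPMet} to apply property~\BS{} of $H$ to the induced action on the (merely pseudo-metric, possibly ultra) quotient, and then combine the two diameter bounds by the triangle inequality. The only differences are cosmetic — you handle the infimum with an $\varepsilon$-argument where the paper adds $1$, and you spell out the verifications the paper leaves to the reader.
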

\begin{proof}
One direction is simply Lemma~\ref{Lemma:Quotient} and the definition of relative property~\BS.

On the other hand, let $(X,d)$ be a pseudo-metric space on which $G$ acts %continuously\PH{Ajouté «continuous»}
by isometries and let $x$ be an element of $X$.
Let $\setst{g_i}{i\in I}$ be a transversal for $N$, that is $H\cong\setst{g_iN}{i\in I}$ with the quotient multiplication.
By assumption, $N.x$ is bounded of diameter $D_1$ and for any $i\in I$ the subset $g_iN.x$ of $X$ has also diameter $D_1$.
Since $N$ is a subgroup of isometries of $X$, the map $d'\colon X/N\times X/N\to\R$ defined by $d'([x],[y])\coloneqq\inf\setst{d(x',y')}{x'\in N.x,y'\in N.y}$ is the quotient pseudo-distance on $X/N$.
Indeed, while the map $d'$ might not satisfies the triangle inequality for a generic quotient $X/\sim$, this is the case if the quotient is by a subgroup of isometries; details are left to the reader.
Moreover, if $d$ satisfies the strong triangle inequality, then so does $d'$.
The quotient action of $H\cong G/N$ on $X/N$ is %continuous,\PH{Ajouté «continuous»}
by isometries and the diameter of $H.xN$ is bounded, say by $D_2$.
In particular, for any $i$ and $j$ in $I$, the distance between the subsets $g_iN.x$ and $g_jN.x$ of $X$ is bounded by $D_2$.
Since this distance is an infimum, there exist actual elements of $g_iN.x$ and $g_jN.x$ at distance less than $D_2+1$.
Altogether, we obtain that any $y$ in $G.x$ is at distance at most $D_1+D_2+1$ of~$x$.
Hence, the orbit $G.x$ is bounded.
\end{proof}
Since the triangle graph, which is not median, is a quotient of the $2$-regular infinite tree by a subgroup of isometries, the proof of Proposition~\ref{Prop:Extension} does not carry over for properties \FW{} and \FA.
Similarly, the quotient of $\R$ by the action of $\Z/2\Z$ given by $x\mapsto-x$ is not a Banach space and hence the proof of Proposition~\ref{Prop:Extension} does not apply to properties \FH{} and \FB.
However, the statement of Proposition~\ref{Prop:Extension} (stability under extensions) remains true for properties \FH, \FB, \FW{} and \FA.
For properties \FH{} and \FB, this follows from the fixed-point definition and the fact that a non-empty closed subset of an Hilbert space (respectively of a reflexive Banach space) is an Hilbert space (respectively a reflexive Banach space) itself.
For property~\FW{} and \FA, see~\cite[Proposition 5.B.3]{Cornulier2013} and~\cite{MR0476875}.

%\PHInline{Je me suis arrêté ici dans ma relecture.}
We now state a result on infinite direct sums.
\begin{lem}\label{Lemma:Cof}
%Suppose that \BS{} implies having uncountable cofinality.
Let $G$ and $(G_x)_{x\in X}$ be non-trivial groups and let $H$ be a group acting on $X$.
 Then
\begin{enumerate}
\item $\bigoplus_{x\in X}G_x$ has uncountable cofinality if and only if all the $G_x$ have uncountable cofinality and $X$ is finite,
\item If $G\wr_XH$ has uncountable cofinality, then $H$ acts on $X$ with finitely many orbits.
\end{enumerate}
\end{lem}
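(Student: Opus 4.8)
The plan is to deduce both statements from Lemma~\ref{Lemma:CofSub}, which characterizes (the failure of) uncountable cofinality via writing the group as a countable increasing union of proper subgroups. For item (1), one direction is Corollary~\ref{Cor:Prod} (uncountable cofinality being a property~\BS{} for the category of ultrametric spaces, which satisfies (A2)): if $X$ is finite and all $G_x$ have uncountable cofinality then so does the finite direct product $\bigoplus_{x\in X}G_x$. For the converse, I would argue contrapositively. If some $G_{x_0}$ lacks uncountable cofinality, then $\bigoplus_{x\in X}G_x$ surjects onto $G_{x_0}$ and we conclude by Lemma~\ref{Lemma:Quotient}. So assume all $G_x$ have uncountable cofinality but $X$ is infinite. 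Pick for each $x$ a nontrivial element $g_x\in G_x$. Enumerate a countably infinite subset $\{x_1,x_2,\dots\}\subseteq X$ and let $K_n\leq\bigoplus_{x\in X}G_x$ be the subgroup of functions supported on $X\setminus\{x_{n+1},x_{n+2},\dots\}$ (equivalently, those vanishing at all $x_m$ with $m>n$). Each $K_n$ is proper (it misses the element $\delta_{x_{n+1}}(g_{x_{n+1}})$), the $K_n$ increase, and their union is all of $\bigoplus_{x\in X}G_x$ since every element has finite support. Hence $\bigoplus_{x\in X}G_x$ is a countable increasing union of proper subgroups, so by Lemma~\ref{Lemma:CofSub} it does not have uncountable cofinality.

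For item (2), suppose $H$ acts on $X$ with infinitely many orbits; I want to show $G\wr_X H$ fails to have uncountable cofinality. Choose orbit representatives $x_1,x_2,\dots$ for countably many distinct $H$-orbits, fix a nontrivial $g\in G$, and for each $n$ let $L_n$ be the subgroup of $G\wr_X H=(\bigoplus_X G)\rtimes H$ generated by $H$ together with the copies of $G$ indexed by the $H$-orbits of $x_1,\dots,x_n$; concretely $L_n=\bigl(\bigoplus_{x\in X_n}G\bigr)\rtimes H$ where $X_n$ is the union of those finitely many orbits, which is $H$-invariant so this is genuinely a subgroup. The $L_n$ increase and each is proper because it contains no element of $\bigoplus_X G$ supported at $x_{n+1}$. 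Finally $\bigcup_n L_n=G\wr_X H$: any element $(\varphi,h)$ has $\varphi$ of finite support, hence supported inside finitely many $H$-orbits, hence inside some $X_n$. So $G\wr_X H$ is a countable increasing union of proper subgroups and Lemma~\ref{Lemma:CofSub} finishes it.

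The routine points to check are that the $K_n$ and $L_n$ really are subgroups (for $L_n$ this uses that $X_n$ is $H$-invariant so that $H$ normalizes $\bigoplus_{x\in X_n}G$) and that they are proper and exhaustive, all of which are immediate from the finite-support condition. The only mild subtlety — and the step I would watch most carefully — is making sure that in item (2) the chosen orbits are genuinely distinct so that the supports of the witnessing elements $\delta_{x_{n+1}}(g)$ escape $X_n$; this is exactly where the hypothesis of infinitely many orbits (rather than merely an infinite set $X$) is used, and it is why the wreath-product statement is phrased in terms of orbits while the direct-sum statement in item (1) is phrased in terms of $X$ itself.
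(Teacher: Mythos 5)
Your overall route differs from the paper's: you deduce everything from the subgroup characterization of Lemma~\ref{Lemma:CofSub}, whereas the paper deliberately argues via explicit ultrametrics (it constructs the $G$-invariant ultrametric $d_{\max}(f,g)=\max\setst{i}{f(i)\neq g(i)}$ on the direct sum and exhibits an unbounded orbit). The paper itself notes that the subgroup route is ``a short exercise left to the reader'', so the strategy is legitimate, and your item (1) is complete and correct: the chain $K_n$ of functions supported on $X\setminus\{x_{n+1},x_{n+2},\dots\}$ correctly absorbs all of $X$ outside the chosen countable set, so the union really is everything.

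Item (2), however, has a genuine gap in the exhaustion step. You choose representatives $x_1,x_2,\dots$ of countably many distinct orbits and set $L_n=(\bigoplus_{x\in X_n}G)\rtimes H$ with $X_n=H.x_1\cup\dots\cup H.x_n$. The claim ``$\varphi$ has finite support, hence is supported inside finitely many $H$-orbits, hence inside some $X_n$'' fails when $X/H$ is uncountable (or merely larger than your chosen countable family): a finitely supported $\varphi$ may live entirely in orbits you never selected, so $\bigcup_n L_n\subsetneq G\wr_XH$ and the chain does not witness countable cofinality. The fix is the same device you already used for $K_n$ in item (1): take $X_n$ to be the \emph{complement} of $H.x_{n+1}\cup H.x_{n+2}\cup\cdots$, i.e.\ throw all the unselected orbits into every $L_n$ from the start. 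Each such $X_n$ is still $H$-invariant, each $L_n$ is still proper (it misses $\delta_{x_{n+1}}^{g}$), and now the union is all of $G\wr_XH$. This is precisely what the paper does when it says ``up to regrouping some of the $L_Y$ together'' to reduce to a decomposition indexed by $i\geq 1$. With that one-line repair your argument is correct.
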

It is of course possible to prove Lemma~\ref{Lemma:Cof} using the characterization of uncountable cofinality in terms of subgroups, in which case the proof is a short exercise left to the reader. However, we find enlightening to prove it using the characterization in terms of actions on ultrametric spaces.
\begin{proof}[Proof of Lemma~\ref{Lemma:Cof}.]
One direction of the first assertion is simply Corollary~\ref{Cor:Prod} and holds for any property of the form \BS.
For the other direction, for any \CatS, if $\bigoplus_{x\in X}G_x$ has property~\BS{} then all its quotients, and hence all the $G_x$, have property~\BS.
Hence, we have to prove that an infinite direct sum of non-trivial groups does not have uncountable cofinality.
If $X$ is infinite, there exists a countable subset $Y \subset X$. Let $Z \coloneqq X\setminus Y$, thus we have $X = Y \sqcup Z$. We can decompose the direct sum as $\bigoplus_{x\in X}G_x = (\bigoplus_{x\in Y}G_x) \times (\bigoplus_{x\in Z}G_x)$ and then, by Corollary~\ref{Cor:Prod}, if $\bigoplus_{x\in Y}G_x$ does not have uncountable cofinality, then neither does $\bigoplus_{x\in X}G_x$.
So let us fix an enumeration of $Y$ and let $K\coloneqq \bigoplus_{i\geq 1}G_i$ and for each $i$, choose $g_i\neq 1$ in $G_i$.
Let $d_{\max}(f,g)\coloneqq\max\setst{i}{f(i)\neq g(i)}$. %be the $G$-invariant ultrametric of Example~\ref{Exmpl:Ultra}.
This is a $K$-invariant ultra-metric for the action by left multiplication of $G$ on itself.
%Let $1_K$ be the unity element of $K$.\PH{Ajouté l'explicitation.}
Then for every integer $n$, the orbit $K.\{1,1,\dots\}$ contains $\{g_1,\dots,g_n,1,\dots\}$ which is at distance $n$ of $\{1,1,\dots\}$ for $d_{\max}$ if the $g_i$ are not equal to $1$.
%\GS{Expliciter qui est $+1_K$}

The second assertion is a simple variation on the first.
Indeed, we have
\[
	G\wr_XH\cong(\bigoplus_{Y\in X/H}L_Y)\rtimes H\qquad\textnormal{with}\qquad L_Y\cong\bigoplus_{y\in Y}G_y,
\]
where $X/H$ is the set of $H$-orbits.
The important fact for us is that $H$ fixes the decomposition into $L_Y$ factors: for all $Y$ we have $H.L_Y=L_Y$.
Up to regrouping some of the $L_Y$ together we hence have $G\wr_XH\cong\bigl(\bigoplus_{i\geq 1}L_i\bigr)\rtimes H$ with $H.L_i=L_i$ for all $i$.
Now, we have an ultradistance $d_{\max}$ on $L\coloneqq\bigoplus_{i\geq 1}L_i$ as above and we can put the discrete distance $d$ on $H$.
Then $d_{\max}'\coloneqq\max\{d_{\max},d\}$ is an ultradistance on $\bigl(\bigoplus_{i\geq 1}L_i\bigr)\rtimes H$, which is $\bigl(\bigoplus_{i\geq 1}L_i\bigr)\rtimes H$-invariant (for the action by left multiplication).
From a practical point of view, we have $d'_\infty\bigl((\varphi,h),(\varphi',h')\bigr)\coloneqq\max\setst{i}{\varphi(i)\neq \varphi'(i)}$ if $\varphi\neq \varphi'$ and $d'_{\max}\bigl((\varphi,h),(\varphi,h')=1$ if $h\neq h'$.
Since the action of $L$ on itself has an unbounded orbit for $d_{\max}$, the action of $\bigl(\bigoplus_{i\geq 1}L_i\bigr)\rtimes H$ on itself has an unbounded orbit for $d'_{\max}$.
\end{proof}
We directly obtain the following corollary
\begin{cor}\label{Cor:Cof}
Suppose that \BS{} implies having uncountable cofinality.
Let $G$ and $(G_x)_{x\in X}$ be non-trivial groups and let $H$ be a group acting on $X$. Then%\PH{Ajouté H...}% \GS{Idem remarque ci-dessus pour $H$}Then
\begin{enumerate}
\item $\bigoplus_{x\in X}G_x$ has property~\BS{} if and only if all the $G_x$ have property~\BS{} and $X$ is finite,
\item If $G\wr_XH$ has property~\BS, then $H$ acts on $X$ with finitely many orbits.
\end{enumerate}
\end{cor}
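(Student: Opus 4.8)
The plan is to read off Corollary~\ref{Cor:Cof} as an immediate consequence of Lemma~\ref{Lemma:Cof} together with the standing hypothesis that property~\BS{} implies uncountable cofinality; no new geometric input is needed, so the argument is purely formal.

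For the first assertion, I would first dispose of the easy implication: if each $G_x$ has property~\BS{} and $X$ is finite, then $\bigoplus_{x\in X}G_x$ is a finite direct product of groups with property~\BS, and hence has property~\BS{} by a finite iteration of Corollary~\ref{Cor:Prod}. For the converse, suppose $\bigoplus_{x\in X}G_x$ has property~\BS. Each factor $G_x$ is a quotient of $\bigoplus_{x\in X}G_x$, so it has property~\BS{} by Lemma~\ref{Lemma:Quotient}. Moreover, by hypothesis $\bigoplus_{x\in X}G_x$ has uncountable cofinality; since the $G_x$ are non-trivial, Lemma~\ref{Lemma:Cof}(1) forces $X$ to be finite --- this is precisely the direction of that lemma asserting that an infinite direct sum of non-trivial groups fails to have uncountable cofinality.

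For the second assertion, suppose $G\wr_XH$ has property~\BS. Then it has uncountable cofinality by hypothesis, and Lemma~\ref{Lemma:Cof}(2) gives at once that $H$ acts on $X$ with finitely many orbits.

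I do not anticipate any genuine obstacle here: the entire content is already packaged in Lemma~\ref{Lemma:Cof} and in Corollaries~\ref{Cor:Prod}, the only point requiring a moment's care being to invoke the correct half of Lemma~\ref{Lemma:Cof}(1) --- that infinite direct sums of non-trivial groups lack uncountable cofinality --- rather than attempting to re-derive it from scratch.
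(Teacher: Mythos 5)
Your proposal is correct and follows exactly the route the paper intends: the authors introduce Corollary~\ref{Cor:Cof} with ``We directly obtain,'' i.e.\ it is read off from Lemma~\ref{Lemma:Cof} (plus Lemma~\ref{Lemma:Quotient} and an iteration of Corollary~\ref{Cor:Prod}) together with the standing hypothesis that \BS{} implies uncountable cofinality. Nothing is missing.
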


While the statement (and the proof) of Corollary~\ref{Cor:Cof}.1 is expressed in terms of uncountable cofinality, it is also possible to state it and prove it for a subcategory \CatS{} of \PMet{} without a priori knowing if \BS{} is stronger than having uncountable cofinality.
The main idea is to find a ``natural'' \CatS-space on which $G=\bigoplus_{i\geq 1}G_i$ acts. For example, for (reflexive) Banach, Hilbert and $L^p$ spaces, one can take $\bigoplus_{i\geq 1}\ell^p(G_i)$. For connected median graphs, one takes the connected component of $\{1_{G_1},1_{G_2},\dots\}$ in $\powerset{\bigsqcup_{i\geq 1} G_i}$.
For (real) trees, it is possible to put a forest structure on $\powerset{\bigsqcup_{i\geq 1} G_i}$ in the following way.
For $E\in\powerset{\bigsqcup_{i\geq 1} G_i}$, and for each $i$ such that $E\cap G_j$ is empty for all $j\leq i$, add an edge from $E$ to $E\cup\{g\}$ for each $g\in G_i$. The graph obtained this way is a $G$-invariant subforest of the median graph on $\powerset{\bigsqcup_{i\geq 1} G_i}$.
For Corollary~\ref{Cor:Cof}.2 we also need that the corresponding structure is invariant by the action of $H$, which is the case of the above examples, except for the tree structure.

It is also possible to give a proof of Corollary~\ref{Cor:Cof}.1 using axioms similar to (A1)-(A3).
More precisely, we need a variation of (A3) for countable Cartesian products (for morphisms we only ask that $\bigoplus_{i\in\N} \Aut_\CatS(X_i)\subseteq\Aut_\CatS(\bigoplus_{i\in\N} X_i)$) and a strong version of (A1) saying that there exists an universal bound $M$ such that any non-trivial group $G$ acts on some \CatS-space moving a point at distance at least $M$.
The axiomatization of Corollary~\ref{Cor:Cof}.2 is a little more complex.
However, since in the following we will use Corollary~\ref{Cor:Cof} only for (real) trees, which do not have Cartesian powers, we will not elaborate on the details and let the proof to the interested reader.
Instead, we will give an axiomatic proof of the following variation of Corollary~\ref{Cor:Cof}.2.
%
%
%\begin{lem}\label{Lemma:XFinite}
%Suppose that \BS{} implies \FW.
%Let $G$ and $H$ be two groups with $G$ non-trivial and let $X$ be a set on which $H$ acts.
%If $G\wr_XH$ has \BS, then $X$ is finite.
%\end{lem}
%\begin{proof}
%We will prove that if $X$ is infinite, then $G\wr_XH$ does not have property~\FW. Suppose that $X$ is infinite.
%The group $\bigoplus_XG$ acts coordinatewise on  $\bigsqcup_XG$: the group $G_x$ acting by left multiplication on $G_x$ and trivially on $G_y$ for $y\neq x$. On the other hand, $H$ acts on $\bigsqcup_XG$ by permutation of the factors.
%Altogether we have an action of $G\wr_XH$ on $\bigsqcup_XG$ and hence on the median graph $\powerset{\bigsqcup_XG}$.
%Let $\mathbf 1\coloneqq\bigcup_{x\in X} 1_{G}$ be the subset of $\powerset{\bigsqcup_XG}$ consisting of the identity elements of all the copies of $G$.
%Since every element of $\bigoplus_XG$ has only a finite number of non-trivial coordinates, the action of $G\wr_XH$ preserves the connected components of $\mathbf 1$ (and in fact every connected component of $\powerset{\bigsqcup_XG}$).
%
%Let $I=\{i_1,i_2,\dots\}$ be a countable subset of $X$ and for every $i\in I$, choose a non-trivial $g_i\in G_{i}$.
%Then the orbit of the vertex $\mathbf 1$ contains the point $\{g_{i_1},\dots, g_{i_n}\}\cup\bigl(\bigcup_{j>n} 1_{G_{i_j}}\bigr)\cup\bigl(\bigcup_{x\notin I} 1_{G_{x}}\bigr)$ which is at distance $2n$ of~$\mathbf 1$.
%Therefore, the action of $G\wr_XH$ on the connected component of~$\mathbf 1$ has an unbounded orbit and then $G\wr_XH$ does not have property~\FW.
%\end{proof}
%
%
\begin{lem}\label{Lemma:XFinite}
Suppose that \CatS{} has non-trivial group actions and satisfies axiom~(A3).
Let $G$ and $H$ be two groups with $G$ non-trivial and let $X$ be a set on which $H$ acts.
If $G\wr_XH$ has \BS, then $X$ is finite.
\end{lem}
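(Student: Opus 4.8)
The plan is to prove the contrapositive constructively: assuming $X$ is infinite, I build an \CatS-space carrying a $G\wr_XH$-action with an unbounded orbit. First I would use Axiom~(A1) (here is where the hypothesis that $G$ is non-trivial enters): it provides an \CatS-space $Y$, a point $x_0\in Y$ and an element $g\in G$ moving $x_0$, i.e.\ with $y\coloneqq g.x_0$ satisfying $d(y,x_0)>0$. Write $\alpha\colon G\to\Aut_{\CatS}(Y)$ for this action and $\rho\colon H\to\Sym(X)$ for the given action of $H$ on $X$.

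Next I would feed $Y$, the base-point $x_0$ and the infinite index set $X$ into Axiom~(A3), obtaining an \CatS-object $Z\coloneqq\bigoplus_{X}^{\CatS}Y$ such that $\bigoplus_{X}Y\subseteq Z\subseteq\prod_{X}Y$ as sets, the canonical image of $\Aut_{\CatS}(Y)\wr_X\Sym(X)$ in $\Bij(\prod_{X}Y)$ lies in $\Aut_{\CatS}(Z)$, and (applying item~3 of (A3) to the point $y$, since $d(y,x_0)>0$) the set
\[
	\Omega\coloneqq\Bigl\{f\in\textstyle\bigoplus_{X}Y\,\Big|\,f(x)=y\text{ for finitely many }x\text{ and otherwise }f(x)=x_0\Bigr\}
\]
has infinite diameter. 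The componentwise map $\bigoplus_{X}G\to\bigoplus_{X}\Aut_{\CatS}(Y)$, $\varphi\mapsto\bigl(x\mapsto\alpha(\varphi(x))\bigr)$, together with $\rho$, should assemble into a group homomorphism
\[
	\Phi\colon G\wr_XH=\Bigl(\bigoplus_{X}G\Bigr)\rtimes H\longrightarrow\Bigl(\bigoplus_{X}\Aut_{\CatS}(Y)\Bigr)\rtimes\Sym(X)=\Aut_{\CatS}(Y)\wr_X\Sym(X);
\]
the only thing to check is that $H$ permutes the coordinates of $\bigoplus_{X}G$ exactly as $\Sym(X)$ permutes those of $\bigoplus_{X}\Aut_{\CatS}(Y)$, which is immediate from $(h.\varphi)(x)=\varphi(h^{-1}.x)$. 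Composing $\Phi$ with the canonical homomorphism $\Aut_{\CatS}(Y)\wr_X\Sym(X)\to\Aut_{\CatS}(Z)$ of (A3) yields a $G\wr_XH$-action on the \CatS-space $Z$.

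Finally I would exhibit the unbounded orbit. The constant function $f_0\equiv x_0$ lies in $\bigoplus_{X}Y\subseteq Z$. For a finite subset $S\subseteq X$ let $\varphi_S\in\bigoplus_{X}G$ be equal to $g$ on $S$ and to $1_G$ off $S$; then $(\varphi_S,1).f_0$ is the function equal to $g.x_0=y$ on $S$ and to $x_0$ elsewhere. Letting $S$ run over all finite subsets of $X$, these elements of the orbit of $f_0$ exhaust $\Omega$, so the $G\wr_XH$-orbit of $f_0$ contains a subset of infinite diameter and is therefore unbounded, contradicting property~\BS{} for $G\wr_XH$. Hence $X$ must be finite.

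There is no serious obstacle: all the weight is carried by Axioms~(A1) and~(A3), and what remains is essentially bookkeeping—verifying that $\Phi$ is compatible with the two (semidirect/wreath) product structures, and taking a little care that the point supplied by (A1) is moved to \emph{strictly positive} distance so that item~3 of (A3) genuinely applies.
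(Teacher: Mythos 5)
Your proposal is correct and follows essentially the same route as the paper: use Axiom~(A1) to produce an \CatS-space $Y$ with a point moved by $G$ at positive distance, form the infinite Cartesian power over $X$ via Axiom~(A3), and observe that the $G\wr_XH$-orbit of the constant function contains the set from item~3 of (A3), which has infinite diameter. The only (harmless) differences are that you spell out the homomorphism into $\Aut_{\CatS}(Y)\wr_X\Sym(X)$, which the paper treats as the ``natural action'', and that you realize the \emph{entire} set $\Omega$ inside the orbit by letting $S$ range over all finite subsets, where the paper only exhibits functions supported on initial segments of a countable subset of $X$ --- your version actually matches the literal statement of (A3) slightly more directly.
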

\begin{proof}
We will prove that if $X$ is infinite, then $G\wr_XH$ does not have property~\BS. Suppose that $X$ is infinite.
By non-trivial groups actions, there exists an \CatS-space $Y$ on which $G$ acts non-trivially by moving some element $y_0$ to another element $z_0\neq y_0$.
Let $\bigoplus_XY$ be the corresponding Cartesian power and $f_0$ be the constant function $f_0(x)=y_0$.
By assumption, the natural action of $G\wr_XH$ on $\bigoplus_XY$ is by \CatS-automorphisms.
Since $X$ is infinite, it contains a countable subset $I=\{i_1,i_2,\dots\}$.
For every integer $n$, the function
\[
	f_n(x)\coloneqq
	\begin{cases}
		z_0 & \textnormal{if } x=i_m, m\leq n\\
		y_0 & \textnormal{otherwise}
	\end{cases}
\]
is in the $G\wr_XH$-orbit of $f_0$. By axiome (A3) this orbit is unbounded and $G\wr_XH$ does not have property~\BS.
%
%The group $\bigoplus_XG$ acts coordinatewise on  $\bigsqcup_XG$: the group $G_x$ acting by left multiplication on $G_x$ and trivially on $G_y$ for $y\neq x$. On the other hand, $H$ acts on $\bigsqcup_XG$ by permutation of the factors.
%Altogether we have an action of $G\wr_XH$ on $\bigsqcup_XG$ and hence on the median graph $\powerset{\bigsqcup_XG}$.
%Let $\mathbf 1\coloneqq\bigcup_{x\in X} 1_{G}$ be the subset of $\powerset{\bigsqcup_XG}$ consisting of the identity elements of all the copies of $G$.
%Since every element of $\bigoplus_XG$ has only a finite number of non-trivial coordinates, the action of $G\wr_XH$ preserves the connected components of $\mathbf 1$ (and in fact every connected component of $\powerset{\bigsqcup_XG}$).
%
%Let $I=\{i_1,i_2,\dots\}$ be a countable subset of $X$ and for every $i\in I$, choose a non-trivial $g_i\in G_{i}$.
%Then the orbit of the vertex $\mathbf 1$ contains the point $\{g_{i_1},\dots, g_{i_n}\}\cup\bigl(\bigcup_{j>n} 1_{G_{i_j}}\bigr)\cup\bigl(\bigcup_{x\notin I} 1_{G_{x}}\bigr)$ which is at distance $2n$ of~$\mathbf 1$.
%Therefore, the action of $G\wr_XH$ on the connected component of~$\mathbf 1$ has an unbounded orbit and then $G\wr_XH$ does not have property~\FW.
\end{proof}
%
%
%Once again, given a suitable \CatS, it is sometimes possible to give a direct proof of Lemma~\ref{Lemma:XFinite}.
%For example, for Hilbert spaces one can take $\bigoplus_X\ell^2(G)$ with $\bigoplus_XG$ acting coordinatewise and $H$ by permutations.
%On the other hand, both the forest structure on $\powerset{\bigsqcup_XG}$ and the ultrametric structure on $\bigoplus_XG$ are in general not invariant under the natural action of $H$ by permutations.

As a direct corollary, we obtain that if property \BS{} implies property \BS' for some \CatS' with non-trivial group actions and (A3) (example: \BS'=\FW), then the conclusion of Lemma~\ref{Lemma:XFinite} holds even if \CatS{} might not satisfy its premises.
Conversely, it follows from Theorem~\ref{Thm:FACK} and Proposition~\ref{Prop:UncCoun} that Lemma~\ref{Lemma:XFinite} do not holds for property~\FR, property \FA{} or having uncountable cofinality.
%
%
%\begin{rem}\label{Rem:Actionsb}
%A reader familiar with wreath products might have recognized that we used the primitive action of the wreath product in the proof of Lemma~\ref{Lemma:XFinite}.
%
%Indeed, $G$ acts on itself by left multiplication.
%It hence acts on the set $G'\coloneqq G\sqcup\{\varepsilon\}$ by fixing $\varepsilon$, and we have the primitive action of $G\wr_{X} H$ on $G'^X$.
%Now, the set $\bigsqcup_XG$ naturally embeds as the subset of $G'^X$ consisting of all functions $\varphi\colon X\to G'$ such that $\varphi(x)=\varepsilon$ for all but one $x\in X$.
%This subset is $G\wr_{X} H$ invariant, which gives us the desired action of $G\wr_{X} H$ on $\bigsqcup_XG$.
%\end{rem}
%
%

We now turn our attention to properties that behave well under finite Cartesian products in the sense of axiom (A2).
We first describe the comportment of property~\BS{} under finite index subgroups.
\begin{lem}\label{Lemma:Subgroup}
Let $G$ be a group and let $H$ be a finite index subgroup. Then
\begin{enumerate}
\item
If $H$ has property~\BS, then so has~$G$,
\item
If \CatS{} satisfies (A2) and $G$ has property~\BS, then $H$ has property~\BS.
\end{enumerate}
\end{lem}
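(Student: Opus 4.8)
The plan is to handle the two implications separately, as they require genuinely different inputs. For part (1), suppose $H \leq G$ has finite index and $H$ has property~\BS. I would take an \CatS-action of $G$ on a space $X$ with basepoint $x$; the $H$-orbit $H.x$ is bounded by hypothesis. Since $[G:H] = k < \infty$, write $G = \bigsqcup_{i=1}^k g_i H$ as a finite union of cosets. Each $g_i.(H.x)$ has the same diameter as $H.x$ since $g_i$ acts by isometries, and these $k$ translates cover $G.x$. A finite union of bounded sets at bounded mutual distance is bounded (one checks this using that $X$ is connected enough — or more directly, since there are only finitely many cosets one can bound $d(x, g_i.x)$ individually and then $\operatorname{diam}(G.x) \le \max_i d(x,g_i.x) + 2\operatorname{diam}(H.x)$). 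Hence $G.x$ is bounded, so $G$ has property~\BS. Note this direction needs no axiom on \CatS{} at all; it is really the same idea as Lemma~\ref{Lemma:InterProd}.

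For part (2), suppose \CatS{} satisfies axiom (A2) and $G$ has property~\BS; I want to conclude $H$ has property~\BS. Let $H$ act on an \CatS-space $Y$ by \CatS-automorphisms, with an orbit $H.y_0$. The natural move is to induce the action up to $G$. Concretely, with $k = [G:H]$ and a transversal $g_1,\dots,g_k$, I would let $G$ act on the $k$\textsuperscript{th} Cartesian power $Y^k$ (which exists and is an \CatS-object by (A2)) via the induced-representation construction: $g$ permutes the $k$ coordinates according to its action on the coset space $G/H$, and on each coordinate it acts through the appropriate "Schreier cocycle" element of $H$. By item~\eqref{Item:Product} of Definition~\ref{Def:Cartesian}, the image of $\Aut_\CatS(Y)^k \rtimes \Sym(k)$ lies in $\Aut_\CatS(Y^k)$, so this is a genuine \CatS-action of $G$. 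Since $G$ has property~\BS, the $G$-orbit of the diagonal point $(y_0,\dots,y_0) \in Y^k$ is bounded. Restricting to $H$, the $H$-orbit of this diagonal point contains $\operatorname{diag}(H.y_0)$ (when $h \in H$ fixes each coset $g_i H$ — or at least a subgroup of finite index in $H$ does, and we can pass to that), and by item~\eqref{Condidef:2} of Definition~\ref{Def:Cartesian}, if $H.y_0$ were unbounded then $\operatorname{diag}(H.y_0)$ would be unbounded, a contradiction. Hence $H.y_0$ is bounded.

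The main obstacle I expect is getting the induced action written down cleanly so that (i) it really is a homomorphism $G \to \Aut_\CatS(Y^k)$ and (ii) the restriction to $H$ visibly contains the diagonal of the original $H$-orbit in its orbit. The subtlety is that a general $h \in H$ does not fix the coordinates of $Y^k$ pointwise — it acts on $G/H$ possibly non-trivially — so one cannot directly say $h.(y_0,\dots,y_0) = (h.y_0,\dots,h.y_0)$. The clean fix is to observe that the coordinate containing the trivial coset $eH$ is moved by $h$ to the coordinate $hH = eH$, so $h$ fixes that coordinate and acts on it through $h$ itself; then $d_{Y^k}\big((y_0,\dots,y_0),\, h.(y_0,\dots,y_0)\big)$ is at least the contribution of that coordinate, which by a projection/Lipschitz argument dominates $d_Y(y_0, h.y_0)$ — and this is exactly the content one packages as item~\eqref{Condidef:2} applied along the diagonal. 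If one prefers to avoid this bookkeeping, an alternative is to use that property~\BS{} passes to overgroups with relative property~\BS{} (the remark after Definition~\ref{Def:PropBS}) together with the fact that $(G,H)$ has relative property~\BS{} once $G$ does, but that only re-proves part~(1); for part~(2) the Cartesian power really is needed, which is why axiom (A2) appears in the hypothesis.
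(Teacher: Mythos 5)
Your overall route is the same as the paper's: part~(1) by covering $G.x$ with the $[G:H]$ translates $g_i.(H.x)$ of the bounded set $H.x$ (the paper phrases this contrapositively, splitting an unbounded $G$-orbit into at most $[G:H]$ $H$-orbits, but the content is identical and neither version needs any axiom on \CatS), and part~(2) by inducing the $H$-action up to $G$ on the Cartesian power $Y^{G/H}$ supplied by axiom~(A2). Part~(1) is correct; the aside about $X$ being ``connected enough'' is unnecessary, and your constant should read $2\max_i d(x,g_i.x)+2\diam(H.x)$, but these are cosmetic.

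In part~(2) you have correctly isolated the one delicate point: a general $h\in H$ permutes the cosets $g_iH$ non-trivially and, even on a coset it fixes, acts through the cocycle $g_i^{-1}hg_i$ rather than through $h$, so the restriction of the induced action to $H$ is \emph{not} the diagonal action (your parenthetical about passing to a finite-index subgroup fixing all cosets does not remove the conjugation twist either). The paper's own proof handles this by asserting that $H$ acts diagonally, which places $\diag(\orbite)$ inside a single $G$-orbit and lets Condition~\ref{Condidef:2} of Definition~\ref{Def:Cartesian} apply verbatim. Your substitute, however, is not licensed by the axioms: you lower-bound $d_{Y^k}\bigl(\diag(y_0),h.\diag(y_0)\bigr)$ by the contribution $d_Y(y_0,h.y_0)$ of the first coordinate and claim this is ``exactly'' Condition~\ref{Condidef:2}. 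It is not: that condition only says that the \emph{diagonal} of an unbounded set is unbounded; it says nothing about coordinate projections being short, nor about slices of the form $E\times\{y_0\}^{k-1}$ being unbounded, and neither of these follows formally from it. For every concrete category in Table~\ref{Table1} the Cartesian power carries a $d_p$-type metric for which projections are $1$-Lipschitz and your estimate goes through, but as an argument from axiom~(A2) alone this final step is a gap. To close it within the paper's axiomatic framework you must either exhibit a genuine diagonal of an unbounded set inside a single $G$-orbit, so that Condition~\ref{Condidef:2} applies, or explicitly add the projection (or slice) property to the axiom.
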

\begin{proof}
Suppose that $G$ does not have \BS{} and let $X$ be an \CatS-space on which $G$ acts with an unbounded orbit $\orbite$.
Then $H$ acts on $X$ and $\orbite$ is a union of at most $[G:H]$ orbits. This directly implies that $H$ has an unbounded orbit and therefore does not have \BS.

On the other hand, suppose that $H\leq G$ is a finite index subgroup of G without property~\BS.
Let $\alpha\colon H\curvearrowright X$ be an action of $H$ on an \CatS-space $(X,d_X)$ such that there is an unbounded orbit $\orbite$.
Similarly to the classical theory of representations of finite groups, we have the induced  action $\Ind_H^G(\alpha)\colon G \curvearrowright X^{G/H}$ on the set $X^{G/H}$. Since $H$ has finite index, $X^{G/H}$ is an \CatS-space and the action is by \CatS-automorphisms. On the other hand, the subgroup $H\leq G$ acts diagonally on $X^{G/H}$, which implies that $\diag(\orbite)$ is contained in a $G$-orbit.
Since $\diag(\orbite)$ is unbounded, $G$ does not have property~\BS.

For readers that are not familiar with representations of finite groups, here is the above argument in more details.
Let $(f_i)_{i=1}^n$ be a transversal for $G/H$.
The natural action of $G$ on $G/H$ gives rise to an action of $G$ on $\{1,\dots,n\}$.
Hence, for any $g$ in $G$ and $i$ in $\{1,\dots,n\}$ there exists a unique $h_{g,i}$ in $H$ such that $gf_i=f_{g.i}h_{g,i}$. That is, $h_{g,i}=f_{g.i}^{-1}gf_i$.
We then define $g.(x_1,\dots,x_n)\coloneqq(h_{g,g^{-1}.1}.x_{g^{-1}.1},\dots,h_{g,g^{-1}.n}.x_{g^{-1}.n})$. This is indeed an action by \CatS-auto\-mor\-phisms on $X^{G/H}$ by Condition~\ref{Item:Product} of Definition~\ref{Def:Cartesian}.
Moreover, every element $h\in H$ acts diagonally by $h.(x_1,\dots,x_n)=(h.x_1,\dots,h.x_n)$.
In particular, this $G$-action has an unbounded orbit.
\end{proof}
We now prove one last lemma that will be necessary fo the proof of Theorem~\ref{Thm:Main}.
\begin{lem}\label{Lemma:Unboundedness}
Suppose that \CatS{} satisfies (A2). If $X$ is finite and $G\wr_XH$ has property~\BS, then $G$ has property~\BS.
\end{lem}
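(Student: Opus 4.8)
The plan is to show that $G$ sits, essentially up to finite index, inside $G\wr_X H$ as the base group $\bigoplus_X G$ when $X$ is finite, and then transfer property~\BS{} downward using Corollary~\ref{Cor:Prod} and Lemma~\ref{Lemma:Subgroup}. Since $X$ is finite, the base group is simply the finite direct product $\bigoplus_X G \cong G^{\#X}$. However, $\bigoplus_X G$ need not be normal with nice complement behaviour unless we are careful; the cleanest route is to observe that $\bigoplus_X G$ is a subgroup of $G\wr_X H$ which, in general, is \emph{not} of finite index (as $H$ may be infinite). So Lemma~\ref{Lemma:Subgroup} cannot be applied directly to the pair $(G\wr_X H, \bigoplus_X G)$. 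Instead I would work with the copy of $G$ sitting in a single coordinate.

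First I would fix a point $x_0\in X$ and consider the subgroup $G_{x_0}\le \bigoplus_X G\le G\wr_X H$ consisting of functions supported on $\{x_0\}$; this is isomorphic to $G$. The key point is then to exhibit, from an action of $G$ with an unbounded orbit, an action of $G\wr_X H$ with an unbounded orbit. Suppose $G$ acts on an \CatS-space $Y$ with an unbounded orbit. By axiom~(A2), the finite Cartesian power $Y^{\#X}$ is an \CatS-space on which $\Aut_\CatS(Y)^{\#X}\rtimes\Sym(\#X)$ acts by \CatS-automorphisms. Now $G\wr_X H$ acts on $Y^{\#X}$: the base group $\bigoplus_X G \cong G^{\#X}$ acts coordinatewise, and $H$ acts by permuting the $\#X$ coordinates according to its action on $X$ (which is well-defined since $X$ is finite, so this permutation action lands in $\Sym(\#X)$). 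This is precisely the canonical image of $\Aut_\CatS(Y)^{\#X}\rtimes\Sym(\#X)$, hence an action by \CatS-automorphisms. Under this action, the subgroup $G_{x_0}$ acts on the $x_0$-coordinate exactly as $G$ acts on $Y$, while fixing the other coordinates (up to the base point choice). Choosing the starting point of $Y^{\#X}$ to be $(y, y_0, \dots, y_0)$ for an appropriate $y$ in the unbounded $G$-orbit, or more simply invoking item~\ref{Condidef:2} of Definition~\ref{Def:Cartesian} on the diagonal, we get that $G_{x_0}$ — and hence $G\wr_X H$ — has an unbounded orbit on $Y^{\#X}$.

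Therefore, if $G\wr_X H$ has property~\BS, then $G$ must have property~\BS. The main obstacle — really a point to handle carefully rather than a deep difficulty — is making sure the relevant orbit on $Y^{\#X}$ is genuinely unbounded: the action of $G_{x_0}$ alone only moves one coordinate, so I would either pick the base point of the product in that single unbounded coordinate (so the $G_{x_0}$-orbit is a copy of an unbounded $G$-orbit, unbounded by item~\ref{Condidef:2} applied with $n=1$, or directly since $d$ on $Y^{\#X}$ dominates $d_Y$ on the $x_0$-factor), or instead embed $G$ diagonally into $\bigoplus_X G$ and use item~\ref{Condidef:2} of Definition~\ref{Def:Cartesian} verbatim. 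Either way the verification that the product distance detects the unbounded coordinate is immediate from axiom~(A2), and the lemma follows.
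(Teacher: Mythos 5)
Your proposal is correct and follows the paper's own argument: form the primitive action of $G\wr_XH$ on the finite Cartesian power $Y^{X}$ (an action by \CatS-automorphisms by item~\ref{Item:Product} of Definition~\ref{Def:Cartesian}) and observe that the diagonal copy $\diag(G.y)$ of an unbounded $G$-orbit lies in a single $G\wr_XH$-orbit and is unbounded by item~\ref{Condidef:2}. The only thing to drop is the single-coordinate variant, or at least the claim that the product metric ``dominates'' the $x_0$-factor --- axiom (A2) does not guarantee any such lower bound on single-factor slices --- so keep the diagonal argument, which is exactly what the paper does.
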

\begin{proof}
Suppose that $G$ does not have \BS{} and let $(Y,d_Y)$ be an \CatS-space on which $G$ acts with an unbounded orbit $G.y$.
Then $(Y^X,d)$ is an \CatS-space and we have the \emph{primitive action} of the wreath product $G\wr_XH$ on $Y^X$:
\[
	\bigl((\varphi,h).\psi\bigr)(x)=\varphi(h^{-1}.x).\psi(h^{-1}.x).
\]
By Condition~\ref{Item:Product} of Definition~\ref{Def:Cartesian}, this action is by \CatS-automorphisms.
The orbit $G.y$ embeds diagonally and hence $\diag(G.y)$ is an unbounded subset of some $G\wr_XH$-orbit, which implies that $G\wr_XH$ does not have property~\BS.
\end{proof}
It is also possible to derive Lemma~\ref{Lemma:Unboundedness} directly from Lemma~\ref{Lemma:Subgroup}.2, with a more algebraic proof.
Indeed, using the notation and hypothesis of Lemma~\ref{Lemma:Unboundedness}, let $H'$ be the kernel of the action of $H$ on $X$ and $\pi\colon G\wr_XH\to H$ be the canonical projection. Then $\pi^{-1}(H')\cong \bigoplus_XG\oplus H'$ is a finite index subgroup of $G\wr_XH$ and hence has property~\BS.
Since $G$ is a quotient of $\bigoplus_XG\oplus H'$ we conclude that it also has property~\BS.

%By combining Corollary~\ref{Cor:Wreath} and Lemmas~\ref{Lemma:XFinite} and~\ref{Lemma:Unboundedness} we obtain the following result which implies Theorem~\ref{Thm:Main}.
%%
%%
%\begin{thm}\label{Thm:Technic}
%Suppose that \CatS{} has non-trivial group actions and bornological  Cartesian powers.
%Let $G$ and $H$ be two groups with $G$ non-trivial and let $X$ be a set on which $H$ acts. Then the wreath product $G \wr_X H$ has property~\BS{} if and only if $G$ and $H$ have property~\BS{} and $X$ is finite.
%\end{thm}
%%
%%
%Similarly to Lemma~\ref{Lemma:XFinite}, the conclusion of Theorem~\ref{Thm:Technic} remains true if the hypothesis on \CatS{} are replaced  by ``\CatS{} satisfies (A2) and property \BS{} implies property \FW''.

We now proceed to prove Proposition~\ref{Prop:UncCoun}.
As for Lemma~\ref{Lemma:Cof}, it is also possible to prove it using the characterization of uncountable cofinality in terms of subgroups, in which case it is an easy exercise, but we will only give a proof using the characterization in terms of actions on ultrametric spaces.

%For $g$ in $G$ and $x$ in $X$, we define the following analogs of Kronecker's delta functions
%\begin{equation*}
%\delta_x^g (y) \coloneqq
%\begin{cases}
%g & \textnormal{if }y = x \\
%1 & \textnormal{if } y \neq x.
%\end{cases}
%\end{equation*}
%
%
\begin{proof}[Proof of Proposition~\ref{Prop:UncCoun}]
By Corollary~\ref{Cor:Wreath} and Lemma~\ref{Lemma:Cof} we already know that if $G \wr_X H$ has uncountable cofinality, then $H$ has uncountable cofinality and it acts on $X$ with finitely many orbits.
We will now prove that if $G \wr_X H$ has uncountable cofinality so does $G$.
Let us suppose that $G$ has countable cofinality. By Lemma~\ref{Lemma:CofSub}, there exists an ultrametric $d$ on $G$ such that the action of $G$ on itself by left multiplication has an unbounded orbit.
But then we have the primitive action of the wreath product $G\wr_XH$ on $G^X\cong\prod_XG$, which preserves $\bigoplus_XG$.
It is easy to check that the map $d_\infty\colon\bigoplus_XG\times\bigoplus_XG\to\R$ defined by $d_\infty(\psi_1,\psi_2)\coloneqq\max\setst{d\bigl(\psi_1(x),\psi_2(x)\bigr)}{x\in X}$ is a $G\wr_XH$-invariant ultrametric.
Finally, let $g_0\in G$ be an element of unbounded $G$-orbit for $d$ and let $x_0$ be any element of $X$.
For $g$ in $G$ and $x$ in $X$, we define the following analog of Kronecker's delta functions
\begin{equation*}
\delta_x^g (y) \coloneqq
\begin{cases}
g & \textnormal{if }y = x \\
1 & \textnormal{if } y \neq x.
\end{cases}
\end{equation*}
Then we have $(\delta_{x_0}^g,1).\delta_{x_0}^{g_0}=\delta_{x_0}^{gg_0}$ and hence $d_\infty(\delta_{x_0}^{g_0},\delta_{x_0}^{gg_0})=d(g_0,gg_0)$ is unbounded.

Suppose now that both $G$ and $H$ have uncountable cofinality and that $H$ acts on $X$ with finitely many orbits. We want to prove that $G\wr_XH$ has uncountable cofinality.

Let $(Y,d)$ be an ultrametric space on which $G\wr_XH$ acts.
Then $H$ and all the $G_x$ act on $Y$ with bounded orbits.
%Let $\mathcal O_1,\dots,\mathcal O_n$ be the $H$-orbits on $X$ and for each $1\leq i\leq n$ choose an element $x_i$ in $\mathcal O_i$.
Let $H.x_1,\dots, H.x_n$ be the $H$-orbits on~$X$ and let $y$ be any element of $Y$.
Then $H.y$ has finite diameter $D_0$ while $G_{x_i}.y$ has finite diameter $D_i$.
For any $x\in X$, there exists $1\leq i\leq n$ and $h\in H$ such that $x=h.x_i$.
We have
\begin{align*}
	d\bigl((\delta_{x_i}^g,h^{-1}).y,y\bigr)&\leq\max\{d\bigl((\delta_{x_i}^g,h^{-1}).y,(\delta_{x_i}^g,1).y\bigr),d\bigl((\delta_{x_i}^g,1).y,y\bigr)\}\\
	&=\max\{d\bigl((1,h^{-1}).y,y\bigr),d\bigl((\delta_{x_i}^g,1).y,y\bigr)\}\\
	&\leq \max\{D_0,D_i\},
\end{align*}
which implies that the diameter of $G_{x_i}h^{-1}.y$ is bounded by $\max\{D_0,D_i\}$.
But $G_{x_i}h^{-1}.y$ has the same diameter as $hG_{x_i}h^{-1}.y=G_{h.x_i}.y=G_x.y$.

On the other hand, the diameter of $\bigoplus_XG.y$ is bounded by the supremum of the diameters of the $G_{x}.y$, and hence bounded by $\max\{D_0,D_1,\dots,D_n\}$.
Finally, for $(\varphi,h)$ in $G\wr_X H$ we have
\begin{align*}
	d\bigl(y,(\varphi,h).y\bigr)&\leq\max\{d\bigl(y,(\varphi,1).y\bigr),d\bigl((\varphi,1).y,(\varphi,h).y\bigr)\}\\
	&=\max\{d\bigl(y,(\varphi,1).y\bigr),d\bigl(y,(1,h).y\bigr)\}\\
	&\leq\max\{\max\{D_0,D_1,\dots,D_n\},D_0\}.
\end{align*}
That is, the diameter of $G\wr_XH.y$ is itself bounded by $\max\{D_0,D_1,\dots,D_n\}$, which finishes the proof.
\end{proof}
While the fact that trees do not have Cartesian powers is an obstacle to our methods, we still have a weak version of Theorem~\ref{Thm:Technic} for properties~\FA{} and~\FR.
Before stating it, remind that we already know, by Proposition~\ref{Prop:UncCoun}, the behavior of uncountable cofinality under wreath products.
On the other hand, we have the following result:
\begin{lem}\label{Lemma:Z}
%Suppose that $H$ acts on $X$ with finitely many orbits.
The group $G\wr_XH$ has no quotient isomorphic to $\Z$ if and only if both $G$ and $H$ have no quotient isomorphic to $\Z$.
\end{lem}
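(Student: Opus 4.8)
The plan is to reduce everything to the abelianization, since having no quotient isomorphic to $\Z$ is equivalent to the abelianization having no quotient isomorphic to $\Z$ (a quotient map $G\to\Z$ factors through $G/[G,G]$, and conversely), and indeed an abelian group $A$ has a quotient isomorphic to $\Z$ if and only if it is infinite finitely generated or, more to the point, if and only if $A\otimes\Q\neq 0$ or $A$ surjects onto $\Z$ — the cleanest invariant to use here is simply: an abelian group $A$ surjects onto $\Z$ iff it has a free abelian quotient of positive rank, which for our purposes we detect via the rank $\dim_\Q(A\otimes_\Z\Q)$ together with finitely-generatedness, but in fact the slickest route is to observe that $A$ surjects onto $\Z$ iff $A/A_{tors}$ is nonzero and... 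Let me instead proceed concretely.

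First I would compute the abelianization of $W\coloneqq G\wr_XH=(\bigoplus_X G)\rtimes H$. Writing $N=\bigoplus_X G$, we have $W^{\ab}=W/[W,W]$, and $[W,W]$ contains $[N,N]$, the commutators $[N,H]$, and $[H,H]$. One gets $W^{\ab}\cong (N/\langle [N,N],[N,H]\rangle)\times H^{\ab}$, where $N/[N,N]=\bigoplus_X G^{\ab}$ and quotienting further by the $H$-action identifies the coordinates within each $H$-orbit, yielding $W^{\ab}\cong\bigl(\bigoplus_{Y\in X/H}G^{\ab}_Y\bigr)\times H^{\ab}$ where each $G^{\ab}_Y$ is a copy of $G^{\ab}$ (this uses that the $H$-action permutes coordinates transitively on each orbit $Y$, so the subgroup generated by $\varphi-h.\varphi$ collapses each orbit-block to a single diagonal copy of $G^{\ab}$). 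In particular $W^{\ab}$ is a (possibly infinite) direct sum of copies of $G^{\ab}$, indexed by $X/H$, times $H^{\ab}$.

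Now I would argue both directions. If $G$ or $H$ surjects onto $\Z$: then $G^{\ab}$ or $H^{\ab}$ surjects onto $\Z$, hence so does $W^{\ab}$ (project to one $G^{\ab}_Y$ factor, or to the $H^{\ab}$ factor, then onto $\Z$), so $W$ surjects onto $\Z$. Conversely, suppose $W$ surjects onto $\Z$, equivalently $W^{\ab}$ surjects onto $\Z$, equivalently (since a quotient of an abelian group surjecting onto $\Z$ gives a finitely generated, hence infinite cyclic, quotient, pulled back) $W^{\ab}$ has $\Z$ as a quotient. A surjection $\phi\colon\bigl(\bigoplus_{Y}G^{\ab}_Y\bigr)\times H^{\ab}\twoheadrightarrow\Z$ restricted to the $H^{\ab}$ factor and to each $G^{\ab}_Y$ factor gives homomorphisms to $\Z$; since $\Z$ is generated by the images and a homomorphism out of a direct sum is determined by its restrictions to the summands, at least one summand — either $H^{\ab}$ or some $G^{\ab}_Y$ — must have nonzero image, hence image a nontrivial subgroup of $\Z$, hence surjects onto $\Z$ after composing with the inclusion $n\Z\hookrightarrow\Z\cong n\Z$. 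Thus $H^{\ab}$ surjects onto $\Z$, giving $H\twoheadrightarrow\Z$, or some $G^{\ab}_Y\cong G^{\ab}$ surjects onto $\Z$, giving $G\twoheadrightarrow\Z$. Either way we contradict the hypothesis, completing the proof.

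The main obstacle, and the only place requiring care, is the computation of $W^{\ab}$ — specifically checking that modding $\bigoplus_X G^{\ab}$ by the relations coming from the $H$-action really does collapse each $H$-orbit block to a single diagonal copy of $G^{\ab}$ and does nothing more; one must verify that no further collapsing occurs between distinct orbits and that the $H^{\ab}$ factor splits off cleanly. This is a routine but slightly fiddly manipulation with the semidirect-product structure; alternatively one can sidestep the precise identification of $W^{\ab}$ and argue more directly that any homomorphism $W\to\Z$ must kill $N$ (because $N$ is generated by conjugates of each $G$-copy, and a homomorphism to abelian $\Z$ sends all $H$-conjugates of a fixed copy $G_{x_0}$ to the same subgroup of $\Z$, while the copies $G_{x}$ for $x$ in an infinite orbit would then... ) — but the finiteness issues make the abelianization computation the safer path, so that is what I would write out in full.
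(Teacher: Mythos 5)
Your proof is correct and follows the same overall route as the paper: both rest on the identification $(G\wr_XH)^{\ab}\cong \bigoplus_{X/H}(G^{\ab})\times H^{\ab}$ (which the paper, like you, treats as a routine computation and does not write out) together with the claim that a direct sum surjects onto $\Z$ if and only if some summand does. The one place you diverge is in proving that claim: you restrict a surjection $\bigl(\bigoplus_{Y}G^{\ab}_Y\bigr)\times H^{\ab}\twoheadrightarrow\Z$ to the summands, note that at least one summand must have nonzero image, and use that every nonzero subgroup of $\Z$ is again infinite cyclic --- a short, purely algebraic argument. The paper instead routes the same step through its own machinery, namely the characterization of ``no quotient isomorphic to $\Z$'' as the bounded-orbit property B$\Z$ for orientation-preserving actions on the $2$-regular tree (Example~\ref{Exmp:BZ}): an action on that tree with bounded orbits is trivial, so if no summand projects onto $\Z$ then the whole direct sum acts trivially and cannot project onto $\Z$ either. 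Your version is more elementary and self-contained; the paper's is chosen to stay within the bounded-orbit framework that organizes the rest of the article. Both arguments are complete (the trailing, abandoned alternative in your last paragraph is not needed), so there is no gap to repair.
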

\begin{proof}
%Remind that a group $K$ has no quotient isomorphic to $\Z$ if and only if $K^{\ab}$ has no quotient isomorphic to $\Z$.
%The desired result then follows from $(G\wr_XH)^{\ab}\cong \bigoplus_{X/H}(G^{\ab})\times H^{\ab}$, Example~\ref{Exmp:BZ} and Corollary~\ref{Cor:Prod}.
The desired result follows from $(G\wr_XH)^{\ab}\cong \bigoplus_{X/H}(G^{\ab})\times H^{\ab}$ and the claim that a direct sum $\bigoplus_{y\in Y}K_y$ has a quotient isomorphic to $\Z$ if and only if at least one of the factor has a quotient isomorphic to $\Z$.
Indeed, one direction of the claim is trivial.
For the other direction, remind that $K$ does not project onto $\Z$ if and only if any action of $K$ by orientation preserving isomorphisms on $Z$, the $2$-regular tree, has bounded orbits. But the only possibility for such an action to have bounded orbits is to be trivial.
If none of the $K_y$ projects onto $\Z$, all their actions on $Z$ are trivial and so is any action of $\bigoplus_{y\in Y}K_y$, which can therefore not project onto $\Z$.
\end{proof}
By Corollary~\ref{Cor:Wreath}, Proposition~\ref{Prop:UncCoun} and Lemma~\ref{Lemma:Z}, we directly obtain the following partial version of Theorem~\ref{Thm:FACK}.
\begin{prop}\label{Prop:WRFA}
Let $G$ and $H$ be two groups with $G$ non-trivial and $X$ a set on which $H$ acts.
Then
\begin{enumerate}
\item If $G\wr_XH$ has property~\FA{} (respectively property~\FR), then $H$ has property~\FA{} (respectively property~\FR), $H$ acts on $X$ with finitely many orbits, $G$ has no quotient isomorphic to $\Z$ and $G$ has uncountable cofinality,
\item If both $G$ and $H$ have no quotient isomorphic to $\Z$, have uncountable cofinality and $H$ acts on $X$ with finitely many orbits, then $G\wr_XH$ has no quotient isomorphic to $\Z$ and has uncountable cofinality,
\item If both $G$ and $H$ have property~\FA{} (respectively property~\FR) and $X$ is finite, then $G\wr_XH$ has property~FA (respectively property~\FR).
\end{enumerate}
\end{prop}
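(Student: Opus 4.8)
The plan is to obtain all three items by assembling results already available, after recording two elementary observations about \FA{} and \FR. First, each of these is a property of the form \BS{} --- for the category of trees, respectively real trees --- so the axiom-free results of this section, in particular Corollary~\ref{Cor:Wreath}, apply verbatim to them. Second, by Proposition~\ref{Prop:ImplProp} together with Example~\ref{Exmp:BZ}, both \FA{} and \FR{} imply having uncountable cofinality and having no quotient isomorphic to $\Z$. Granting these, no genuinely new argument is needed.

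For item~(1), assume $G\wr_XH$ has \FA{} (respectively \FR). Applying Corollary~\ref{Cor:Wreath}.1 to this \BS-property gives that $H$ has \FA{} (respectively \FR). Next, $G\wr_XH$ has uncountable cofinality, so Proposition~\ref{Prop:UncCoun} forces $G$ and $H$ to have uncountable cofinality and $H$ to act on $X$ with finitely many orbits. Finally $G\wr_XH$ has no quotient isomorphic to $\Z$, so Lemma~\ref{Lemma:Z} gives that $G$ has none either. For item~(2), the three hypotheses on $G$ and $H$ are exactly the inputs of Lemma~\ref{Lemma:Z} (yielding that $G\wr_XH$ has no quotient isomorphic to $\Z$) and of Proposition~\ref{Prop:UncCoun} (yielding that $G\wr_XH$ has uncountable cofinality). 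For item~(3), when $X$ is finite one applies Corollary~\ref{Cor:Wreath}.2 to the \BS-property \FA{} (respectively \FR) and concludes directly.

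I do not expect any real obstacle: the entire content has been prepared in the earlier sections and the proof is a short assembly. The one point worth emphasizing is why this is only a partial analogue of Theorem~\ref{Thm:Technic}: trees and real trees satisfy neither axiom~(A2) nor axiom~(A3), so the Cartesian-power machinery is unavailable. Consequently item~(3) cannot be pushed beyond finite $X$, while items~(1)--(2) provide the necessary direction (in a form even slightly stronger than the ``only if'' part of Theorem~\ref{Thm:FACK}) together with only a weak form of the sufficient direction. Upgrading the sufficient direction to the full statement, valid when $H$ acts with finitely many orbits and without fixed points, is exactly where the external input Theorem~\ref{Thm:FACK} is used, producing Theorem~\ref{Thm:FAFiniteOrbits}.
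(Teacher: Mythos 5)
Your proposal is correct and matches the paper exactly: the authors give no separate proof, simply noting that the proposition follows directly from Corollary~\ref{Cor:Wreath}, Proposition~\ref{Prop:UncCoun} and Lemma~\ref{Lemma:Z}, which is precisely the assembly you carry out (together with the observation that \FA{} and \FR{} imply uncountable cofinality and property~\BZ). Your concluding remarks on why only a partial analogue of Theorem~\ref{Thm:Technic} is available for trees also agree with the paper's discussion.
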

Moreover, by using Corollary~\ref{Cor:Cof} we can get ride of the \emph{finitely many orbits} hypothesis in Theorem~\ref{Thm:FACK} in order to obtain Theorem~\ref{Thm:FAFiniteOrbits}.

%Observe that our statement of Theorem~\ref{Thm:FACK} differs of the original statement of~\cite{MR2764930}.
%Indeed, where we ask $G$ to have uncountable cofinality and no quotient isomorphic to $\Z$, the authors of~\cite{MR2764930} ask $G$ to have uncountable cofinality and finite abelianization.
%However, these two sets of conditions are equivalent.
%One implication is trivial, as finite Abelian groups do not project onto $\Z$. The other implication follows for countable Abelian groups from the structure theorem of finitely generated Abelian groups.
%For the general case, Y. Cornulier kindly reminded us that any infinite Abelian group has a countable quotient; the conclusion hence follows from the countable case.
%
%
%
%
%
%
%
%
%
%

%
%
%
%
%
%
%
%
%
%
\end{document}